	\theoremstyle{plain}
		\newtheorem{thm}{Theorem}[section]	
		\newtheorem{lem}[thm]{Lemma}
		\newtheorem{prop}[thm]{Proposition}
		\newtheorem{claim}{Claim}
		\newtheorem{thmintro}{Theorem}
	\theoremstyle{definition} 
		\newtheorem{defn}[thm]{Definition}
		\newtheorem{eg}[thm]{Example}
		\newtheorem{conv}[thm]{Convention}
	\theoremstyle{remark}
		\newtheorem{rem}[thm]{Remark}
		\newcommand{\N}{\mathbb N}
		\newcommand{\C}{\mathbb C}
		\newcommand{\R}{\mathbb R}
		\DeclareMathOperator{\SL}{SL}
		\DeclareMathOperator{\Hom}{Hom}
		\DeclareMathOperator{\End}{End}
		\DeclareMathOperator{\supp}{supp}
		\DeclareMathOperator{\id}{id}
		\DeclareMathOperator{\Ad}{Ad}
		\DeclareMathOperator{\Ind}{Ind}
		\DeclareMathOperator{\Iso}{Iso}
		\newcommand{\acton}{\curvearrowright}
		\newcommand{\GG}{G^{(0)}}
\title[Amenable actions and Fell bundles]{Amenability for actions of \'etale groupoids on $C^*$-algebras and Fell bundles}
\author{Julian Kranz}
\address{Mathematisches Institut, Westf\"alische Wilhelms-Universit\"at M\"unster, Einsteinstr. 62, 48149 M\"unster, Germany} 
\email{julian.kranz@uni-muenster.de}
\urladdr{https://www.uni-muenster.de/IVV5WS/WebHop/user/j\_kran05/}
\subjclass[2020]{46L55 (Primary), 22A22 (Secondary).}
\date{\today}
\thanks{This work was funded by the Deutsche Forschungsgemeinschaft (DFG, German Research Foundation) - Project-ID 427320536 - SFB 1442, as well as by Germany's Excellence Strategy EXC 2044 390685587, Mathematics Münster: Dynamics-Geometry-Structure.}
\keywords{$C^*$-algebras, Fell bundles, groupoid actions, amenability, nuclearity}
\begin{document}

\begin{abstract}
We generalize Renault's notion of measurewise amenability to actions of second countable, Hausdorff, \'etale groupoids on separable $C^*$-algebras and show that measurewise amenability characterizes nuclearity of the crossed product whenever the $C^*$-algebra acted on is nuclear. In the more general context of Fell bundles over second countable, Hausdorff, \'etale groupoids, we introduce a version of Exel's approximation property. We prove that the approximation property implies nuclearity of the cross-sectional algebra whenever the unit bundle is nuclear. For Fell bundles associated to groupoid actions, we show that the approximation property implies measurewise amenability of the underlying action. 

\end{abstract}

\maketitle

\tableofcontents

\section{Introduction}
Originally introduced by John von Neumann for discrete groups \cite{Neumann1929}, the notion of amenability has been generalized over several decades to larger and larger classes of dynamical systems, such as group actions on spaces \cite{Zimmer}, locally compact groupoids \cite{ADRenault2000amenableGroupoids}, group actions on operator algebras \cite{anantharaman1979amenablevN,anantharaman1987amenableC,BusEchWil22} and even to Fell bundles over groups \cite{Exel-Crelle,exel2002approximation, abadie2021amenability}.
While the theory of amenable groupoids is by now a well-known and established theory (see the extensive textbooks \cite{ADRenault2000amenableGroupoids, williamsgroupoidbook}), the theory of amenability for group actions on $C^*$-algebras has recently experienced a number of breakthroughs. These include the introduction of the correct definition for actions of locally compact groups \cite{BusEchWil22} and the discovery of several useful characterizations of amenability \cite{abadie2021amenability,BeardenCrann,OzawaSuzuki}. 
Prior to that, approximation properties like the weak containment property or nuclearity of the associated $C^*$-algebras have also been studied for actions of and Fell bundles over \emph{amenable groupoids} \cite{williamsamenability,takeishi,LalondeNuclearity,Lal19}. However, an approach to these phenomena focusing on amenability of the dynamical system instead of amenability of the underlying groupoids does not seem to have been developed so far.

The aim of this work is to push towards a unification of the above mentioned theories by introducing amenability conditions for actions of groupoids on operator algebras. 
More specifically, we introduce notions of 

\begin{enumerate}
	\item \emph{Amenable actions} of groupoids on von Neumann algebras, generalizing both amenable measured groupoids \cite{ADRenault2000amenableGroupoids} and amenable actions of groups on von Neumann algebras \cite{anantharaman1979amenablevN} (see Definition \ref{defn:measurewise amenable}),
	\item \emph{Measurewise amenable actions} of groupoids on $C^*$-algebras, generalizing both measurewise amenable groupoids \cite{ADRenault2000amenableGroupoids} and amenable actions of groups on $C^*$-algebras \cite{anantharaman1987amenableC, BusEchWil22} (see Definition \ref{defn: amenable G C^*}), and
	\item An \emph{approximation property} for Fell bundles over groupoids, generalizing both topological amenability for groupoids \cite{ADRenault2000amenableGroupoids} and Exel's approximation property \cite{Exel-Crelle} (see Definition \ref{defn:AP}).	
\end{enumerate}

In the definitions mentioned above and throughout the paper, we restrict our attention to locally compact, second countable, Hausdorff, \emph{\'etale} groupoids as well as $C^*$-algebras, von Neumann algebras and \emph{saturated} Fell bundles satisfying appropriate separability conditions. Our reason to work in a separable setting is our crucial use of Renault's disintegration theorem \cite{Renault1987Representations} and its generalization to saturated Fell bundles \cite{Muhlydisintegration}. The \'etaleness assumption is not needed for the definitions themselves, but is used in most of our theorems and makes some technicalities more accessible. Although parts of the paper carry over to the non-\'etale case, we keep \'etaleness as a blanket assumption and outline the non-\'etale case in an appendix. We hope that this increases the readability of the paper.

Our main results state that two of the key properties of actions of amenable groupoids on $C^*$-algebras and Fell bundles over amenable groupoids, namely
\begin{enumerate}
	\item Equality of the full and reduced $C^*$-algebras,
	\item Nuclearity of the reduced $C^*$-algebra, given that unit fibers are nuclear,
\end{enumerate}
can still be obtained when relaxing amenability of the underlying groupoid to an amenability condition on the dynamical system (see Theorems \ref{thm:APimpliesWC}, \ref{thm:measurewise amenable implies WC}, \ref{thm:APimpliesnuclearity}, and \ref{thm:Nuclearity for groupoid crossed products} for the precise statements). For actions of groupoids on $C^*$-algebras, we also obtain a converse:

\begin{thmintro}[Theorem \ref{thm:Nuclearity for groupoid crossed products}]
	Let $\alpha\colon G\acton A$ be an action of a second countable, Hausdorff, \'etale groupoid on a separable $C^*$-algebra. Then the reduced crossed product $A\rtimes_r G$ is nuclear if and only if $A$ is nuclear and $\alpha$ is measurewise amenable. 
\end{thmintro}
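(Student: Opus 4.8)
The plan is to route both implications through the standard fact that, for a separable $C^*$-algebra $B$, nuclearity of $B$ is equivalent to injectivity (equivalently, semidiscreteness) of its enveloping von Neumann algebra $B^{**}$, and more generally to injectivity of $\pi(B)''$ for every representation $\pi$. The bridge between this $C^*$-theoretic statement and the von Neumann notion of amenability from Definition~\ref{defn:measurewise amenable} will be Renault's disintegration theorem: every nondegenerate representation of $A\rtimes_r G$ is supported on a quasi-invariant measure $\mu$ on $\GG$ and disintegrates into a von Neumann dynamical system $\alpha^\mu\colon G\acton M_\mu$, where $M_\mu$ is the von Neumann algebra assembled from $A^{**}$ and $L^\infty(\GG,\mu)$, with the generated von Neumann algebra $\pi(A\rtimes_r G)''$ identified with the crossed product $M_\mu\rtimes_{\alpha^\mu}G$. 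I will assume the two von Neumann-level bridge results — that for an injective $M$, amenability of $\alpha\colon G\acton M$ is equivalent to injectivity of $M\rtimes_\alpha G$ — which generalize Anantharaman--Delaroche's theorem and which I expect to be established in the von Neumann section preceding this statement.

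For the reverse implication, assume $A$ is nuclear and $\alpha$ is measurewise amenable. It suffices to show that every representation $\pi$ of $A\rtimes_r G$ generates an injective von Neumann algebra. By the disintegration theorem, $\pi$ is supported on some quasi-invariant measure $\mu$ and $\pi(A\rtimes_r G)''\cong M_\mu\rtimes_{\alpha^\mu}G$. Nuclearity of $A$ gives $A^{**}$ injective, hence $M_\mu$ injective; measurewise amenability (Definition~\ref{defn: amenable G C^*}) says precisely that the disintegrated action $\alpha^\mu$ is amenable for every such $\mu$; so the bridge result yields injectivity of $M_\mu\rtimes_{\alpha^\mu}G$. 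Since injectivity is preserved under arbitrary $\ell^\infty$-products, injectivity of all these pieces reassembles to injectivity of $(A\rtimes_r G)^{**}$, and $A\rtimes_r G$ is nuclear.

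For the forward implication, assume $A\rtimes_r G$ is nuclear. Nuclearity of $A$ is the easy part: since $G$ is étale, $\GG$ is clopen in $G$, so restriction of sections to the unit space defines a faithful conditional expectation $E\colon A\rtimes_r G\to A$, and nuclearity passes to the range of a conditional expectation, whence $A$ is nuclear. For measurewise amenability, fix a quasi-invariant measure $\mu$; the associated disintegrated regular representation generates $M_\mu\rtimes_{\alpha^\mu}G$, which is a weakly closed direct summand of the injective algebra $(A\rtimes_r G)^{**}$ and therefore itself injective. Because $M_\mu$ is injective, the converse bridge result forces $\alpha^\mu$ to be an amenable von Neumann action. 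As $\mu$ was arbitrary, $\alpha$ is measurewise amenable.

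The main obstacle I anticipate lies in the measure-theoretic bookkeeping of the disintegration and its reassembly: correctly identifying, via Renault's disintegration theorem, the von Neumann algebra of a disintegrated representation of $A\rtimes_r G$ with the crossed product $M_\mu\rtimes_{\alpha^\mu}G$, and — in the reverse direction — passing from fiberwise injectivity over the whole (non-separable) family of quasi-invariant measures to injectivity of the single algebra $(A\rtimes_r G)^{**}$ while keeping all fields measurable and all identifications natural. The separability hypotheses and the étaleness assumption are exactly what make Renault's disintegration available and keep this patching under control.
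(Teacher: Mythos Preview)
Your forward implication is essentially the paper's: nuclearity of $A$ via the conditional expectation, and measurewise amenability via Proposition~\ref{prop:injectivity of crossed products} applied to the (injective) von Neumann crossed product arising from an arbitrary covariant representation.

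For the reverse implication, however, your route diverges from the paper's and has a genuine gap. The paper does \emph{not} establish the bridge result you assume, namely that for an injective $M$ an amenable action $G\acton M$ forces $M\overline\rtimes G$ to be injective. Proposition~\ref{prop:injectivity of crossed products} is only the converse direction. Instead of proving this von Neumann-level implication, the paper argues at the $C^*$-level via tensor products, exactly as in the proof of Theorem~\ref{thm:APimpliesnuclearity}: one shows that measurewise amenability passes to $(G,\mathscr A\otimes_{\max}B,\alpha\otimes\id)$ for every unital separable $B$ (Lemma~\ref{lem: measurewise amenable and tensor products}), then runs the chain
\[
(A\rtimes G)\otimes_{\max}B\cong (A\otimes_{\max}B)\rtimes G\cong (A\otimes_{\max}B)\rtimes_r G\cong (A\otimes B)\rtimes_r G\cong (A\rtimes_r G)\otimes B
\]
using Lemma~\ref{lem:maximalandmaximalproduct}, Theorem~\ref{thm:measurewise amenable implies WC}, nuclearity of $A$, and Lemma~\ref{lem:reducedandminimalproduct}. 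This avoids the missing von Neumann bridge entirely.

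There is also a second, more technical gap in your argument. For an arbitrary representation $\pi$ of $A\rtimes_r G$, disintegration yields a covariant representation $(\mu,\mathscr H,\rho,U)$ with $\pi\cong\rho\rtimes U$, but $\pi(A\rtimes_r G)''$ is then $(\rho\rtimes U)(A\rtimes G)''\subseteq\mathcal L(L^2(\mathscr H))$, which is \emph{not} the von Neumann crossed product $\rho(A)''\overline\rtimes G$; the latter is by definition the weak closure of the image of the \emph{induced regular} representation $M_\rho$ on $L^2(G,r^*\mathscr H,\nu^{-1})$ (cf.\ Lemma~\ref{lem:williams} and the definition preceding it). These are different representations on different Hilbert spaces, and the integrated form can generate a much smaller algebra. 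So even granting your bridge result, the identification $\pi(A\rtimes_r G)''\cong M_\mu\overline\rtimes_{\alpha^\mu}G$ would still need justification --- at best one expects the former to be a normal quotient of the latter, which would suffice, but this has to be argued.
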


We also relate the approximation property to measurewise amenability:
\begin{thmintro}[Theorem \ref{thm:AP implies measurewise amenable}]
	Let $\alpha\colon G\acton A$ be an action of a second countable, Hausdorff, \'etale groupoid on a separable $C^*$-algebra. Assume that the associated Fell bundle has the approximation property. Then $\alpha$ is measurewise amenable. 
\end{thmintro}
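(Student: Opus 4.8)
The plan is to unfold the definition of measurewise amenability and transport the approximation-property data to each quasi-invariant measure separately. By Definition~\ref{defn: amenable G C^*}, the action $\alpha$ is measurewise amenable precisely when, for every quasi-invariant probability measure $\mu$ on $\GG$, the induced $W^*$-dynamical system $\bar\alpha\colon G\acton M$, where $M$ is the weak closure of $A$ in a representation associated with $\mu$, is amenable in the sense of Definition~\ref{defn:measurewise amenable}. So I would fix such a $\mu$ once and for all and aim to manufacture the net witnessing von Neumann-algebraic amenability of $\bar\alpha$ out of the sections supplied by the approximation property.

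Concretely, the approximation property (Definition~\ref{defn:AP}) furnishes a net $(\xi_i)$ of continuous compactly supported sections of the associated Fell bundle $\mathcal B$ satisfying the normalization $\big\|\langle \xi_i,\xi_i\rangle\big\|_\infty\le 1$, where $\langle \xi_i,\xi_i\rangle(x)=\sum_{r(\gamma)=x}\xi_i(\gamma)^*\xi_i(\gamma)\in A_x$, together with an approximate-identity condition of the convolution type. The first step is to repackage these as a net of positive-type functions: set
\[
h_i(\gamma)\;=\;\sum_{\eta\in G^{s(\gamma)}}\xi_i(\gamma\eta)^*\,\alpha_\gamma\!\big(\xi_i(\eta)\big)\;\in\;A_{r(\gamma)},
\]
the matrix coefficient of the (coefficient-twisted) regular representation determined by $\xi_i$. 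Being such a coefficient, each $h_i$ is automatically positive-definite in the Fell-bundle sense, is again continuous and appropriately supported, and restricts on units to $h_i(x)=\langle \xi_i,\xi_i\rangle(x)\le 1$; the approximate-identity condition of the approximation property translates into $h_i\to 1$ uniformly on compact subsets of $G$, where $1$ denotes the unit section $\gamma\mapsto 1_{A_{r(\gamma)}}$.

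The second step is the passage to the measured setting. Using Renault's disintegration theorem \cite{Renault1987Representations} I would realize $A$, relative to $\mu$, on the Hilbert space of the measured groupoid and let $M$ be the resulting von Neumann algebra carrying the extended action $\bar\alpha$. Integrating the positive-type functions $h_i$ against $\mu$ and the Haar system, incorporating the Radon--Nikodym cocycle $D^{1/2}$ coming from quasi-invariance, produces a net of normalized positive-type measurable fields $\bar h_i\colon G\to M$. Because the $\xi_i$ are compactly supported and the convergence $h_i\to 1$ is uniform on compacta, dominated convergence against the finite measure $\mu$ upgrades this to ultraweak convergence $\bar h_i\to 1$ in $M$. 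A net of such normalized, ultraweakly convergent positive-type fields is exactly the data required by Definition~\ref{defn:measurewise amenable} (following \cite{anantharaman1979amenablevN}), so $\bar\alpha$ is amenable; as $\mu$ was arbitrary, $\alpha$ is measurewise amenable.

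The main obstacle is precisely this analytic transition: the approximation property is a topological statement (continuous sections, convergence uniform on compact sets), whereas von Neumann amenability is a measure-theoretic, ultraweak statement relative to $\mu$. Making the bridge rigorous requires (i) checking that the continuous sections define bounded elements of the $W^*$-module over $(\GG,\mu)$ with coefficients in $M$, so that the pairings defining $\bar h_i$ survive the completion; (ii) verifying that the twist by the modular cocycle $D^{1/2}$ does not destroy the approximate invariance, which is where quasi-invariance of $\mu$ is genuinely used; and (iii) confirming that the fiberwise $A$-valued structure of $\mathcal B$ is compatible with the weak closure, i.e.\ that the limit computed fiberwise over $\GG$ is the unit of $M$ ultraweakly. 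Renault's disintegration theorem is the tool that ties these together, and the careful choice of the topology in which $\bar h_i\to 1$ is asserted is the crux of the argument.
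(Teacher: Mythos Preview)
Your plan has a genuine gap: it overlooks the \emph{centrality} requirement in Definition~\ref{defn:measurewise amenable}. That definition does not merely ask for positive-type functions $\tilde a_i\colon G\to \mathscr M$ converging weak-$*$ to $1$; it asks for witnesses $a_i\in L^2(G,r^*Z(\mathscr M))$, i.e.\ sections taking values in the \emph{center} of the fibre von Neumann algebras. The sections $\xi_i$ supplied by the approximation property (Definition~\ref{defn:AP}) are only $r^*\mathscr A$-valued, and there is no reason their images in $\pi(A)''$ should be central. Pushing them forward to $M=\pi(A)''$ produces $M$-valued (not $Z(M)$-valued) data, so the net you build does not meet the hypothesis of Definition~\ref{defn:measurewise amenable}. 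Your sentence ``a net of such normalized, ultraweakly convergent positive-type fields is exactly the data required by Definition~\ref{defn:measurewise amenable}'' is therefore incorrect as stated.

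This is precisely the obstacle the paper's proof is designed to overcome, and it does so by a different route. Rather than verify Definition~\ref{defn:measurewise amenable} directly, it invokes the equivalent characterization in Theorem~\ref{thm:amenability for vN algebras}\eqref{item:commmeasureamenable}: it suffices to build a $G$-equivariant conditional expectation $Q\colon L^\infty(G,r^*Z(\pi(\mathscr A)''),\nu)\to Z(\pi(A)'')$. The AP witnesses $a_i$ are used, together with an approximate unit $(e_\lambda)$ for $A$ (needed because $A$ is not unital, another point your sketch glosses over when it asserts $h_i\to 1$), to define completely positive maps $P_{i,\lambda}(f)=\langle e_\lambda^{1/2}a_i,\,f\,e_\lambda^{1/2}a_i\rangle$ on $L^\infty(G,r^*\pi(\mathscr A)'',\nu)$. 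The heart of the argument is then Claim~\ref{clm:Q is central}: the limit $Q$ of the restrictions to central $f$ actually lands in $Z(\pi(A)'')$. This is proved by a delicate commutator estimate showing $Q(f)b=P(fb)$ is self-adjoint for self-adjoint $b\in\pi(A)$, exploiting both the approximate-unit regularization and the AP convergence. Only after this centrality step do the remaining claims (conditional expectation, equivariance) go through. Your outline contains none of this mechanism, and the ``analytic transition'' you flag as the main difficulty is in fact secondary to the algebraic issue of centrality.
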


We conclude this introduction with some questions that we leave unanswered. 
\begin{enumerate}
	\item Let $\alpha\colon G\acton A$ be an action of a second countable, Hausdorff, \'etale groupoid on a separable $C^*$-algebra. Does the associated Fell bundle have the approximation property if and only if $\alpha$ is measurewise amenable?\label{Q:topmeasure}
	\item Let $\mathscr B$ be a separable, saturated Fell bundle over a second countable, Hausdorff, \'etale groupoid $G$. Does nuclearity of $C^*_r(\mathscr B)$ imply that $\mathscr B$ has the approximation property? \label{Q:nuclearity}
	\item Is the approximation property invariant under equivalence of Fell bundles?\label{Q:equivalence}
\end{enumerate}
A positive solution to both Question \eqref{Q:topmeasure} and Question \eqref{Q:equivalence} would yield a positive solution to Question \eqref{Q:nuclearity} since every separable, saturated Fell bundle over a locally compact, Hausdorff, second countable groupoid is equivalent to an action on a separable $C^*$-algebra \cite{WilliamsStabilization}. 
Note also that all the above questions have positive answers for \'etale groupoids acting on their unit space (see \cite[Cor. 6.2.14, Thm. 3.3.7, Thm. 2.2.17]{ADRenault2000amenableGroupoids}) and for Fell bundles over (or actions of) discrete groups (see \cite[Thm. 2.13, Thm. 3.2]{OzawaSuzuki}, \cite[Cor. 4.11]{BusEchWil22}, \cite[Cor. 5.20]{abadie2021amenability}). 

\subsection*{Acknowledgements}
The author would like to thank Alcides Buss, Siegfried Echterhoff, and Diego Mart\'inez for several useful discussions and comments, and the referee for pointing out the reference \cite{Lal19}. The author is especially grateful to Siegfried Echterhoff for continuously encouraging him to push forward this project.

\section{Preliminaries}
\subsection{Notation}
	If $X,Y,Z$ are sets and $p\colon X\to Z,~q\colon Y\to Z$ are maps, we denote by 
		\[X\times_{p,Z,q}Y\coloneqq \{(x,y)\in X\times Y\mid p(x)=q(y)\}\]
	their fiber product. If $p$ or $q$ are clear from the context, we omit them from the notation. If we understand $q\colon Y\to Z$ as a bundle over $Z$, we also call 
		\[p^*Y\coloneqq X\times_{p,Z,q}Y\]
	the \emph{pull-back of $Y$ along $p$} and understand it as a bundle over $X$ via the first coordinate projection. 
	Throughout, all measures on standard Borel spaces are assumed to be $\sigma$-finite and all measures on locally compact, Hausdorff spaces are assumed to be Radon measures. 
	If $A$ is a $C^*$-algebra and $E$ a Hilbert-$A$-module, we denote by $\mathcal L_A(E)$ the $C^*$-algebra of adjointable operators on $E$. If $B$ is another $C^*$-algebra and $\pi\colon A\to \mathcal L_B(F)$ a representation on a Hilbert-$B$-module $F$, we denote by $E\otimes_\pi F$ the balanced tensor product of $E$ and $F$ over $\pi$, see \cite[Ch. 4]{Lance}.
	
\subsection{Hilbert bundles}
For more background on Hilbert bundles, we refer the reader to \cite[Appendix F]{williams07} or to \cite[Ch. IV.8]{takesaki1}. 
\begin{defn}\label{defn:Hilbert bundle}
	Let $X$ be a standard Borel space. A \emph{Hilbert bundle over $X$} is a Borel space $\mathscr H$ together with a Borel map $p\colon\mathscr H\to X$, and for each $x\in X$  a Hilbert space structure on the fiber $\mathscr H_x\coloneqq r^{-1}(x)$, (inducing the given Borel structure), and such that there exists a sequence $(\xi_n)_{n\in \N}$ of Borel sections $X\to \mathscr H$ satisfying the following properties. 
	\begin{enumerate}
		\item The maps $\mathscr H\to \C,\quad \eta\mapsto \langle \eta,\xi_n(p(\eta))\rangle$ are Borel for each $n\in \N$.\label{item:Hilbert1}
		\item The maps $X\to \C,\quad x\mapsto \langle \xi_n(x),\xi_m(x)\rangle$ are Borel for each $n,m\in \N$.\label{item:Hilbert2}
		\item The maps $p$ and $\eta\mapsto \langle \eta,\xi_n(p(\eta))\rangle$ together separate the points in $\mathscr H$. \label{item:Hilbert3}
	\end{enumerate}
\end{defn}

\begin{thm}[{\cite[Prop. F.8]{williams07}}]\label{thm:construct Hilbert bundle}
	Let $X$ be a standard Borel space and $\mathscr H$ a set together with a projection $p\colon \mathscr H\to X$ such that for each $x\in X$, the fiber $\mathscr H_x\coloneqq p^{-1}(x)$ is equipped with the structure of a Hilbert space. Suppose there is a sequence $(\xi_n)_{n\in \N}$ of Borel sections $X\to \mathscr H$ satisfying conditions \eqref{item:Hilbert2} and \eqref{item:Hilbert3} of Definition \ref{defn:Hilbert bundle}. Then there is a unique Borel structure on $\mathscr H$ such that $p$ and $\xi_n$ for all $n\in \N$ are Borel and such that condition \eqref{item:Hilbert1} of Definition \ref{defn:Hilbert bundle} holds as well. 
\end{thm}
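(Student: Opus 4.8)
The plan is to realize $\mathscr H$ as a subset of a standard Borel space by means of the coordinate functions provided by the fundamental sequence, and to transport the Borel structure back along this embedding. Concretely, I would consider the map
\[
\Phi\colon \mathscr H\to X\times\C^{\N},\qquad \Phi(\eta)=\bigl(p(\eta),(\langle\eta,\xi_n(p(\eta))\rangle)_{n\in\N}\bigr),
\]
where $X\times\C^{\N}$ carries its product Borel structure and is standard because $X$ is. Condition \eqref{item:Hilbert3} is exactly the statement that $\Phi$ is injective. I would then \emph{define} the Borel structure on $\mathscr H$ to be the pull-back $\mathcal B_0\coloneqq\{\Phi^{-1}(C)\mid C\text{ Borel in }X\times\C^{\N}\}$, equivalently the $\sigma$-algebra generated by $p$ and the functions $\phi_n\colon\eta\mapsto\langle\eta,\xi_n(p(\eta))\rangle$. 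Since $\Phi$ is injective, it is by construction a Borel isomorphism of $(\mathscr H,\mathcal B_0)$ onto its image $\Phi(\mathscr H)$ with the subspace structure, which reduces every measurability check to a computation after composing with $\Phi$.

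With this structure the three existence claims are quick. That $p$ is Borel and that condition \eqref{item:Hilbert1} holds is immediate, as $p$ and each $\phi_n$ are literally coordinates of $\Phi$ and hence $\mathcal B_0$-measurable by definition of the pull-back. For the sections I would invoke condition \eqref{item:Hilbert2}: the composite $\Phi\circ\xi_n$ has coordinates $x\mapsto x$ and $x\mapsto\langle\xi_n(x),\xi_m(x)\rangle$, all Borel, so $\Phi\circ\xi_n$ is Borel into the subspace $\Phi(\mathscr H)$, and since $\Phi$ is a Borel isomorphism onto its image it follows that $\xi_n=\Phi^{-1}\circ(\Phi\circ\xi_n)$ is Borel.

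For uniqueness, let $\mathcal B$ be any admissible Borel structure. Condition \eqref{item:Hilbert1} forces each $\phi_n$ to be $\mathcal B$-measurable, and $p$ is $\mathcal B$-measurable by hypothesis, so $\Phi$ is $\mathcal B$-measurable and therefore $\mathcal B\supseteq\mathcal B_0$. Moreover $\mathcal B_0$ is countably generated and, by condition \eqref{item:Hilbert3}, separates the points of $\mathscr H$. The reverse inclusion is where the real content lies, and the key point I would establish is that $\Phi(\mathscr H)$ is a \emph{Borel} subset of $X\times\C^{\N}$, so that $(\mathscr H,\mathcal B_0)$ is standard. Granting this, uniqueness follows from the Lusin--Souslin theorem applied to the injective Borel map $\Phi$, or equivalently from Blackwell's theorem, which asserts that a countably generated sub-$\sigma$-algebra separating the points of a standard Borel space must be the full structure.

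The main obstacle is thus showing that $\Phi(\mathscr H)$ is Borel. Here I would first observe that condition \eqref{item:Hilbert3}, read fibrewise, says the vectors $\xi_n(x)$ are total in $\mathscr H_x$, so every fibre is separable. I would then run the Gram--Schmidt process on $(\xi_n(x))_{n}$ using only the inner products $\langle\xi_n(x),\xi_m(x)\rangle$, which are Borel in $x$ by condition \eqref{item:Hilbert2}; this yields an orthonormal family $(e_k(x))_k$ spanning $\mathscr H_x$ together with Borel coefficient functions expressing each $e_k$ as a finite combination of the $\xi_j$ and conversely. A sequence $(c_n)$ then lies in the fibre $\Phi(\mathscr H)\cap(\{x\}\times\C^{\N})$ if and only if the induced coefficients $b_k=\langle\eta,e_k(x)\rangle$ are square-summable and the resulting $\eta$ reproduces the $c_n$, which is a countable conjunction of Borel conditions. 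Making this measurable Gram--Schmidt uniform in $x$, in particular tracking the Borel partition of $X$ according to $\dim\mathscr H_x$, is the technical heart of the argument; everything else is formal.
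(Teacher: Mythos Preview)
The paper does not prove this statement; it is simply quoted from Williams's book \cite[Prop.~F.8]{williams07} and used as a black box, so there is no proof in the paper to compare against. Your outline is essentially the standard argument one finds in Williams: embed $\mathscr H$ into $X\times\C^{\N}$ via the coordinate map $\Phi$, pull back the Borel structure, and verify the required measurability properties; the measurable Gram--Schmidt step to show $\Phi(\mathscr H)$ is Borel is exactly the technical core there as well.

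One caveat on uniqueness: as literally phrased in the paper the uniqueness claim is too strong, since the power set of $\mathscr H$ also satisfies the listed conditions. The intended reading, and what Williams actually proves, is uniqueness among Borel structures rendering $\mathscr H$ an analytic (in fact standard) Borel space. Your Blackwell/Souslin argument is the right one for that, but you should make explicit that the competing structure $\mathcal B$ is assumed analytic; otherwise Blackwell's theorem does not apply and the reverse inclusion $\mathcal B\subseteq\mathcal B_0$ genuinely fails.
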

We call a sequence $(\xi_n)_{n\in \N}$ as above a \emph{fundamental sequence for $\mathscr H$}. 
If $\mathscr H$ is a Hilbert bundle over a standard Borel space $X$, we equip 
	\[\Hom(\mathscr H)\coloneqq \left\{(x,T,y)\mid x, y\in X, V\in \mathcal L(\mathscr H_x,\mathscr H_y)\right\}\]
with the weakest Borel structure such that the maps 
	\[\Hom(\mathscr H)\to \C,\quad (x,T,y)\mapsto \langle \xi(y),T \eta(x)\rangle\]
are Borel for all Borel sections $\xi,\eta\colon X\to \mathscr H$. We denote by 
	\[\Iso(\mathscr H)\coloneqq \left\{(x,V,y)\mid V \text{ unitary}\right\}\subseteq \Hom(\mathscr H)\]
the \emph{isomorphism groupoid} of $\mathscr H$ and by 
	\[\End(\mathscr H)\coloneqq \left\{(x,T,y)\mid x=y\right\}\subseteq \Hom(\mathscr H)\]
	the \emph{endomorphism bundle} of $\mathscr H$. 

\begin{defn}
	Let $\mu$ be a measure on $X$. We denote by $L^2(X,\mathscr H,\mu)$ (or $L^2(\mathscr H)$ if $X$ and $\mu$ are understood) the space of equivalence classes of measurable sections $\xi\colon X\to \mathscr H$ such that $x\mapsto \|\xi(x)\|^2$ is $\mu$-integrable, with sections declared equivalent if they agree $\mu$-almost everywhere. We equip $L^2(X,\mathscr H,\mu)$ with the inner product 
		\[\langle \xi,\eta\rangle \coloneqq \int_X\langle \xi(x),\eta(x)\rangle d\mu(x),\quad \xi,\eta\in L^2(X,\mathscr H,\mu).\]
\end{defn}

Let $(X,\mathscr H,\mu)$ be as above. A \emph{measurable field of operators} on $\mathscr H$ is a Borel section 
	\[X\to \End(\mathscr H),\quad x\mapsto T_x\in \mathcal L(\mathscr H_x).\]
We call $x\mapsto T_x$ \emph{essentially bounded} if $x\mapsto \|T_x\|$ is bounded on a $\mu$-conull set. In this case, we can define an operator $T=\int_XT_x d\mu(x)$ on $L^2(X,\mathscr H,\mu)$ by $T\xi(x)\coloneqq T_x(\xi(x))$ for $\xi\in L^2(X,\mathscr H,\mu)$ and $x\in X$. 

\begin{defn}
	An operator $T\in \mathcal L(L^2(X,\mathscr H,\mu))$ is called \emph{decomposable}, if it is of the form $T=\int_XT_xd\mu(x)$ for an essentially bounded measurable field of operators $x\mapsto T_x$. A decomposable operator $T$ is called \emph{diagonal}, if $T_x$ can be chosen to be a multiple of the identity for every $x\in X$.
\end{defn}

\begin{thm}[{\cite[Lem. F.19, Thm. F.20]{williams07}}]\label{Thm unique decomposition}
	An operator $T\in \mathcal L(L^2(X,\mathscr H,\mu))$ is decomposable if and only if it commutes with the diagonal operators. If $T=\int_X T_xd\mu(x)$ and $T=\int_X T'_xd\mu(x)$ are two decompositions of $T$, then we have $T_x=T'_x$ for $\mu$-almost every $x\in X$. 
\end{thm}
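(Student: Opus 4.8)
The plan is to prove both directions of the equivalence in Theorem \ref{Thm unique decomposition}, where the key construction is that the diagonal operators (pointwise multiplication by essentially bounded scalar functions) form an abelian von Neumann algebra whose commutant consists exactly of the decomposable operators. I would first set up the standing data: a fundamental sequence $(\xi_n)_{n\in\N}$ for $\mathscr H$, and note that the sections $x\mapsto \xi_n(x)$, once orthonormalized fiberwise via Gram--Schmidt, give a measurable field of orthonormal bases spanning each fiber. This lets me reduce many statements to coordinates and justifies that $L^2(X,\mathscr H,\mu)$ is generated, as a module over $L^\infty(X,\mu)$, by these sections.

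For the easy direction, suppose $T=\int_X T_x\, d\mu(x)$ is decomposable and let $D=\int_X f(x)\cdot\id\, d\mu(x)$ be a diagonal operator for $f\in L^\infty(X,\mu)$. Then I would check pointwise that for $\mu$-almost every $x$ and every section $\xi$,
\[
(TD\xi)(x)=T_x\bigl(f(x)\xi(x)\bigr)=f(x)\,T_x(\xi(x))=(DT\xi)(x),
\]
using only that $f(x)$ is a scalar, so $T$ commutes with every diagonal operator. The converse is the substantive direction: given $T$ commuting with all diagonal operators, I must produce the fields $T_x$. The plan is to first show $T$ is ``local'', i.e.\ multiplication by indicator functions $\chi_E$ for Borel $E\subseteq X$ commutes with $T$, so $T$ preserves the subspaces $L^2(E,\mathscr H,\mu)$ and is determined by its action on sections supported near each point. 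I would then define, for each pair $m,n$, the scalar functions $x\mapsto \langle (T\xi_n)(x),\xi_m(x)\rangle$ (after orthonormalizing), assemble them into candidate fiberwise operators $T_x$ on the dense span, and verify measurability of $x\mapsto T_x$ as a section into $\End(\mathscr H)$ using the Borel structure described before Theorem \ref{Thm unique decomposition}.

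The hard part will be establishing essential boundedness of the field $x\mapsto \|T_x\|$ and showing it is bounded by $\|T\|$ on a conull set. The natural approach is a standard maximality/exhaustion argument: if the set $\{x:\|T_x\|>\|T\|+\varepsilon\}$ had positive measure for some $\varepsilon>0$, I would use a measurable selection of approximate unit eigenvectors (available because $\mathscr H$ has a fundamental sequence, so one can select Borel sections realizing the fiberwise operator norm up to $\varepsilon$) to build a single section $\xi\in L^2(X,\mathscr H,\mu)$ supported on that set with $\|T\xi\|>(\|T\|+\varepsilon/2)\|\xi\|$, contradicting $\|T\|=\|T\|_{\mathcal L}$. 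This measurable selection step, together with the measurability verification for $x\mapsto T_x$, is where the separability/standard-Borel hypotheses on $X$ and $\mathscr H$ are essential.

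For the uniqueness claim, I would argue that if $\int_X T_x\,d\mu(x)=\int_X T'_x\,d\mu(x)$, then the difference $\int_X (T_x-T'_x)\,d\mu(x)$ is the zero operator; testing against the sections $\chi_E\cdot\xi_n$ and pairing with $\xi_m$ shows that $\langle (T_x-T'_x)\xi_n(x),\xi_m(x)\rangle=0$ for $\mu$-almost every $x$, for each fixed pair $(m,n)$. Taking a countable intersection of the resulting conull sets over all $m,n$ yields a single conull set on which $T_x-T'_x$ annihilates a total subset of each fiber, hence $T_x=T'_x$ there. Since all of this is already contained in the cited references \cite[Lem.\ F.19, Thm.\ F.20]{williams07}, in practice I would mainly cite those and indicate only the coordinate-free formulation needed for the later sections.
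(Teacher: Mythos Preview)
The paper does not prove this theorem at all: it is stated with a citation to \cite[Lem.~F.19, Thm.~F.20]{williams07} and used as a black box, so there is no proof in the paper to compare against. Your sketch follows the standard argument that one finds in the cited reference (diagonal operators form an abelian von Neumann algebra, commutants are local, fiberwise operators are built from matrix coefficients against a fundamental sequence, boundedness via measurable selection, uniqueness by testing on a countable total set), so in effect you are reproducing what the paper simply imports.
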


Let $A$ be a separable $C^*$-algebra and let $(X,\mathscr H,\mu)$ as above. We call a family $\{\pi_x\colon A\to \mathcal L(\mathscr H_x)\}_{x\in X}$ of representations a \emph{Borel family}, if $X\ni x\mapsto \pi_x(a)\in \mathcal L(\mathscr H_x)$ is a measurable field of operators for every $a\in A$. 

\begin{prop}[{\cite[Prop. F.25]{williams07}}]\label{direct integral representation}
	Let $\pi\colon A\to \mathcal L(L^2(X,\mathscr H,\mu))$ be a representation such that $\pi(A)$ commutes with the diagonal operators. Then there is a Borel family of representations $\{\pi_x\colon A\to \mathcal L(\mathscr H_x)\}_{x\in X}$ satisfying
		\[\pi(a)=\int_X\pi_x(a)d\mu(x),\quad \forall a\in A.\]
\end{prop}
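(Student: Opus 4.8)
The plan is to decompose each operator $\pi(a)$ individually and then use separability of $A$ to glue these pointwise decompositions into a single Borel family of $*$-homomorphisms. Since $\pi(A)$ commutes with the diagonal operators, Theorem~\ref{Thm unique decomposition} guarantees that every $\pi(a)$ is decomposable, so for each $a$ I may fix an essentially bounded measurable field $x\mapsto\pi_x(a)$ with $\pi(a)=\int_X\pi_x(a)\,d\mu(x)$, and the uniqueness clause of that theorem says this field is determined up to a $\mu$-null set. The difficulty is that a priori these fields respect the $*$-algebra operations only off $a,b$-dependent null sets, and there are uncountably many relations to control.

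First I would fix a countable $\mathbb Q(i)$-$*$-subalgebra $A_0\subseteq A$ that is norm-dense in $A$ and choose a decomposing field for each $a\in A_0$. For every one of the countably many algebraic identities among elements of $A_0$ --- that is, $\pi(a+b)=\pi(a)+\pi(b)$, $\pi(\lambda a)=\lambda\pi(a)$, $\pi(ab)=\pi(a)\pi(b)$, and $\pi(a^*)=\pi(a)^*$ for $a,b\in A_0$ and $\lambda\in\mathbb Q(i)$ --- the uniqueness of decompositions forces the corresponding identity on the fields to hold for $\mu$-almost every $x$. Since $A_0$ is countable, the associated exceptional null sets have null union, and on the resulting conull set $X_0$ the assignment $a\mapsto\pi_x(a)$ is an honest $*$-homomorphism of the $\mathbb Q(i)$-$*$-algebra $A_0$ into $\mathcal L(\mathscr H_x)$.

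Next I would install the norm bound needed to pass from $A_0$ to $A$. As $\pi$ is a representation it is contractive, and for a decomposable operator one has the standard identity $\|\int_X T_x\,d\mu\|=\operatorname{ess\,sup}_x\|T_x\|$ (cf.\ \cite{williams07}); applying the inequality $\operatorname{ess\,sup}_x\|T_x\|\le\|T\|$ to $T=\pi(a)$ gives $\|\pi_x(a)\|\le\|a\|$ for $\mu$-almost every $x$, for each fixed $a\in A_0$. Intersecting $X_0$ with these countably many further conull sets, I may assume $\|\pi_x(a)\|\le\|a\|$ for all $a\in A_0$ and all $x\in X_0$. For $x\in X_0$ the map $\pi_x$ is then uniformly continuous on $A_0$ and extends uniquely to a $*$-homomorphism $\pi_x\colon A\to\mathcal L(\mathscr H_x)$ with $\|\pi_x(a)\|\le\|a\|$; for $x\notin X_0$ I simply set $\pi_x=0$.

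It remains to verify that $x\mapsto\pi_x(a)$ is a measurable field for every $a\in A$ and that the integral formula survives the extension. Fixing $a\in A$ and $a_n\in A_0$ with $a_n\to a$, the fields $x\mapsto\pi_x(a_n)$ are measurable and converge in norm to $x\mapsto\pi_x(a)$ on $X_0$; since the Borel structure on $\End(\mathscr H)$ is generated by the functions $(x,T,x)\mapsto\langle\xi(x),T\eta(x)\rangle$, and these pass to the pointwise limit, the limit field is again measurable, so $\{\pi_x\}_{x\in X}$ is a Borel family. Finally, from $\|\int_X(\pi_x(a_n)-\pi_x(a))\,d\mu\|=\operatorname{ess\,sup}_x\|\pi_x(a_n)-\pi_x(a)\|\le\|a_n-a\|\to 0$ together with $\pi(a_n)\to\pi(a)$ in norm, the identity $\pi(a)=\int_X\pi_x(a)\,d\mu(x)$ holds for all $a\in A$. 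I expect the main obstacle to be precisely the second paragraph: extracting a single $\mu$-conull set on which all algebraic relations hold at once, which is exactly the point where separability of $A$ (through the countable dense subalgebra) is indispensable.
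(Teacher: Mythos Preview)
The paper does not supply its own proof of this proposition; it is simply quoted from \cite[Prop.~F.25]{williams07}. Your argument is the standard one underlying that reference: decompose $\pi(a)$ for $a$ in a countable dense $\mathbb Q(i)$-$*$-subalgebra via Theorem~\ref{Thm unique decomposition}, intersect the countably many conull sets on which the algebraic relations and the norm bound hold, and then extend by density. The proof is correct and matches the approach one finds in the cited source.
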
 

\subsection{\'Etale groupoids}
For more background on (\'etale) groupoids, we refer the reader to the recent textbook \cite{williamsgroupoidbook}. 

A groupoid $G$ is a small category all of whose morphisms are invertible. We denote by $G$ the set of all morphisms and call the set $G^{(0)}\subseteq G$ of identity morphisms the \emph{unit space}. We denote by $r,s\colon G\to G^{(0)}$ the range and source maps, by $\cdot\colon G^{(2)}\coloneqq G\times_{s,G^{(0)},r}G\to G$ the composition, and by $G\to G,\quad g\mapsto g^{-1}$ the inversion. 

A \emph{topological groupoid} is a groupoid $G$ equipped with a topology such that all the maps mentioned above are continuous. A locally compact topological groupoid $G$ is called \emph{\'etale} if $r\colon G\to G^{(0)}$ and $s\colon G\to G^{(0)}$ are local homeomorphisms. A subset $U\subseteq G$ is called a \emph{bisection} if $r|_U$ and $s|_U$ are homeomorphisms onto their images. Note that a locally compact groupoid is \'etale if and only if its topology has a basis of open bisections. If $G$ is \'etale, then the \emph{fibers} $G^x\coloneqq r^{-1}(x)$ and $G_x\coloneqq s^{-1}(x)$ are discrete for every $x\in G^{(0)}$. 

A subset $U\subseteq G^{(0)}$ is \emph{invariant} if we have $r(g)\in U\Leftrightarrow s(g)\in U$ for all $g\in G$. If $U$ is invariant, we write $G|_U\coloneqq r^{-1}(U)=s^{-1}(U)$.

\begin{conv}
	From now on, all groupoids are assumed to be Hausdorff, \'etale, and second countable. 
\end{conv}

\begin{defn}\label{defn:G-space}
	Let $G$ be a groupoid. A \emph{$G$-space} is a locally compact Hausdorff space $X$ together with a continuous map $p\colon X\to G^{(0)}$ and a continuous map 
		\[\alpha\colon G\times_{s,G^{(0)},p}X\to G\times_{r,G^{(0)},p}X, \quad (g,x)\mapsto (g,\alpha_g(x))\]
	satisfying $\alpha_g\circ \alpha_h=\alpha_{gh}$ for all $(g,h)\in G^{(2)}$. 
	The associated transformation groupoid is the groupoid $X\rtimes G\coloneqq X\times_{p,G^{(0)},r}G$ with unit space $X$ and range, source and multiplication maps given by
	\[r(x,g)\coloneqq x,\quad s(x,g)\coloneqq \alpha_{g^{-1}}(x),\quad (x,g)(y,h)\coloneqq (x,gh),\]	
	 for $((x,g),(y,h))\in (X\rtimes G)^{(2)}$.
\end{defn}

\begin{defn}\label{defn:quasi-invariant}
	Let $G$ be a groupoid and $\mu$ a measure on $G^{(0)}$. We define a measure $\mu\circ \lambda$ on $G$ by 
		\[\mu \circ \lambda(f)=\int_{G^{(0)}}\sum_{g\in G^x}f(g)d\mu(x),\quad f\in C_c(G).\]
	We call $\mu$ \emph{quasi-invariant}, if $\mu \circ \lambda$ is equivalent to the measure $(\mu \circ \lambda)^{-1}$ obtained by pulling back along the inversion map $g\mapsto g^{-1}$. In that case, the Radon--Nikodym derivative $\Delta\coloneqq \frac{d(\mu \circ \lambda)}{d(\mu \circ \lambda)^{-1}}$ is called the \emph{modular function} of $\mu$. 
\end{defn} 

\begin{lem}[{\cite[Lem. 7.6]{williams07}}]\label{lem:hom-almost-everywhere}
	The modular function $\Delta\colon G\to (0,\infty)$ can be chosen to be a Borel homomorphism. 
\end{lem}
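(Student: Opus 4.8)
The plan is to separate a cheap part from the substantive part. A \emph{Borel} representative of $\Delta$ exists for free: since $\mu\circ\lambda$ and $(\mu\circ\lambda)^{-1}$ are equivalent $\sigma$-finite measures on the standard Borel space $G$, the Radon--Nikodym theorem provides a Borel function $\Delta\colon G\to(0,\infty)$ representing the derivative, unique up to a $(\mu\circ\lambda)$-null set. The entire content of the lemma is therefore that the almost-everywhere class of $\Delta$ contains an \emph{everywhere-defined} Borel homomorphism.

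First I would show that any Borel representative of $\Delta$ is an \emph{almost-everywhere homomorphism}. The defining relation $\frac{d(\mu\circ\lambda)}{d(\mu\circ\lambda)^{-1}}=\Delta$ unwinds, using $\int_G\psi\,d(\mu\circ\lambda)^{-1}=\int_G\psi(g^{-1})\,d(\mu\circ\lambda)(g)$, into
\[
	\int_G f\,d(\mu\circ\lambda)=\int_G f(g^{-1})\,\Delta(g^{-1})\,d(\mu\circ\lambda)(g),\qquad f\ge 0\text{ Borel}.
\]
Comparing this with the same relation applied to $f\circ\mathrm{inv}$ immediately yields $\Delta(g)\,\Delta(g^{-1})=1$ for $(\mu\circ\lambda)$-almost every $g$. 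To obtain multiplicativity I would lift the computation to composable pairs: equipping $G^{(2)}$ with the measure $\nu^{(2)}(F)=\int_{G^{(0)}}\sum_{g\in G^{x}}\sum_{h\in G^{s(g)}}F(g,h)\,d\mu(x)$ and using the left invariance $\sum_{h\in G^{s(g)}}\phi(gh)=\sum_{k\in G^{r(g)}}\phi(k)$ of the counting Haar system, a Fubini computation that compares the change of variables $(g,h)\mapsto(g,gh)$ with the displayed identity forces $\Delta(gh)=\Delta(g)\,\Delta(h)$ for $\nu^{(2)}$-almost every composable pair.

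With the almost-everywhere cocycle identity in hand, I would invoke the rectification theorem for measurable homomorphisms on measured groupoids (Ramsay's theorem; see \cite{ADRenault2000amenableGroupoids}): a Borel map $G\to(0,\infty)$ satisfying the homomorphism identity $\nu^{(2)}$-almost everywhere agrees $(\mu\circ\lambda)$-almost everywhere with a strict Borel homomorphism $\widetilde\Delta\colon G\to(0,\infty)$, meaning $\widetilde\Delta(gh)=\widetilde\Delta(g)\,\widetilde\Delta(h)$ for \emph{all} $(g,h)\in G^{(2)}$. Since $\widetilde\Delta=\Delta$ almost everywhere, $\widetilde\Delta$ is again a Radon--Nikodym derivative of $\mu\circ\lambda$ with respect to $(\mu\circ\lambda)^{-1}$, so we may take $\widetilde\Delta$ as our modular function; it is a Borel homomorphism by construction.

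The main obstacle is the rectification step, as it is not formal: upgrading an identity valid off a null set to one valid everywhere relies essentially on the standard Borel (analytic) structure of $G$, available here because $G$ is second countable, Hausdorff and étale, together with a selection argument of descriptive-measure-theoretic nature. By contrast, the derivation of the almost-everywhere cocycle identity is routine in the étale setting, where the Haar system is counting measure and all fiber integrals degenerate into sums, so that the only analytic input is Fubini's theorem and the invariance of counting measure under left translation.
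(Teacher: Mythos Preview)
The paper does not prove this lemma at all: it simply records the statement and cites \cite[Lem.~7.6]{williams07} for the proof. Your proposal supplies an actual argument where the paper gives none, and the outline you give --- obtain a Borel Radon--Nikodym derivative, verify the cocycle identity $\nu^{(2)}$-almost everywhere by a Fubini computation exploiting left invariance of the Haar system, then invoke Ramsay's rectification theorem to upgrade to a strict Borel homomorphism --- is exactly the standard route taken in the cited reference. So your proof is correct and, in effect, reconstructs what the paper delegates to \cite{williams07}.
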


\begin{rem}
	The notation $\mu\circ \lambda$ above alludes to the more general definition when $G$ is a locally compact groupoid with a Haar system $\lambda$. In our situation, $\lambda$ will always be the counting measure. 
\end{rem}

\subsection{Fell bundles}
Fell bundles over \'etale groupoids were introduced in \cite{kumjan1998Fell}. For more details, we refer to \cite{Muhlydisintegration}. For general background on Banach bundles, see \cite{felldoran88}. 

\begin{defn}
	An \emph{upper semicontinuous Banach bundle} over a locally compact Hausdorff space $X$ is a topological space $\mathscr B$ together with a continuous map ${p\colon \mathscr B\to X}$ and for each $x\in X$ the structure of a Banach space on the fiber $\mathscr B_x\coloneqq p^{-1}(x)$ (inducing the given topology) such that 
	\begin{enumerate}
		\item The addition $+\colon \mathscr B\times_X\mathscr B\to \mathscr B$ is continuous.
		\item The scalar multiplication $\cdot\colon\C\times \mathscr B\to \mathscr B$ is continuous.
		\item For each continuous section $f\colon X\to \mathscr B$ of $p$, the function 
			\[X\ni x\mapsto \|f(x)\|\in [0,\infty)\]
		is upper semicontinuous. 
	\end{enumerate}
\end{defn}

\begin{defn}\label{defn:C*bundle}
	An upper semicontinuous $C^*$-bundle over a locally compact Hausdorff space $X$ is an upper continuous Banach bundle $p\colon \mathscr A\to X$ where each fiber $\mathscr A_x$ is equipped with the structure of a $C^*$-algebra (inducing the given norm) such that the multiplication $\cdot \colon \mathscr A\times_X\mathscr A\to \mathscr A$ and the involution $*\colon \mathscr A\to \mathscr A$ are continuous. 
\end{defn}

\begin{rem}\label{rem:C_0(X)-alg}
For an upper semicontinuous $C^*$-bundle $\mathscr A$ over $X$, we denote by $C_0(X,\mathscr A)$ (or $C_0(\mathscr A)$ if $X$ is understood) the $C^*$-algebra of continuous sections $X\to \mathscr A$ vanishing at infinity, equipped with the supremum norm. There is a canonical non-degenerate $*$-homomorphism $C_0(X)\to ZM(C_0(\mathscr A))$. 

More abstractly, we call a $C^*$-algebra $A$ equipped with a non-degenerate $*$-homomorphism $C_0(X)\to ZM(A)$ a \emph{$C_0(X)$-algebra}. We write 
	\[{A_x\coloneqq A/(C_0(X\setminus \{x\})A)}\]
for $x\in X$ and can consider every element $a\in A$ as a map 
	\begin{equation}\label{eq:continuous sections}
		X\to \mathscr A\coloneqq \bigsqcup_{x\in X}A_x,\quad x\mapsto a+C_0(X\setminus \{x\})A.
	\end{equation}
There is a unique topology on $\mathscr A$ turning it into an upper semicontinuous $C^*$-bundle and making the maps defined in \eqref{eq:continuous sections} continuous {\cite[Thm.~C.25]{williams07}}.
In fact, the functor $\mathscr A\mapsto C_0(\mathscr A)$ defines an equivalence from the category of upper semicontinuous $C^*$-bundles over $X$ with continuous fiberwise $*$-homomorphisms to the category of $C_0(X)$-algebras with $C_0(X)$-linear $*$-homomorphisms. 
\end{rem}

\begin{defn}[{\cite{kumjan1998Fell}}]\label{defn:Fell bundle}
	A \emph{Fell bundle} over $G$ is an upper semicontinuous Banach bundle $p\colon \mathscr B\to G$ together with a continuous multiplication map 
		\[\cdot\colon \mathscr B^{(2)}\coloneqq \mathscr B\times_{s\circ p,G,r\circ p}\mathscr B  \to \mathscr B\] and a continuous involution map $*\colon \mathscr B\to \mathscr B$ satisfying the identities
		\begin{enumerate}
			\item $p(bc)=p(b)p(c)$ \label{item:first Fell condition}
			\item $p(b^*)=p(b)^{-1}$
			\item $(b^*)^*=b$
			\item $(bc)^*=c^*b^*$
			\item $\|b^*b\|=\|b\|^2$
			\item $\|b\|\|c\|\leq \|bc\|$
			\item $b^*b\geq 0$
		\end{enumerate}
	for all $(b,c)\in \mathscr B^{(2)}$, as well as the following properties.
		\begin{enumerate}
		\setcounter{enumi}{7}
			\item The multiplication is bilinear and associative.
			\item The involution is conjugate linear.
		\end{enumerate}
	In addition, we say that
		\begin{enumerate}
		\setcounter{enumi}{9}
			\item $\mathscr B$ is \emph{saturated}, if for each $g\in G$, we have $\overline{\operatorname{span} \mathscr B_{g}^*\mathscr B_g} =\mathscr B_{s(g)}$.\label{item:saturated}
			\item $\mathscr B$ is \emph{separable}, if the Banach space $C_0(G,\mathscr B)$ of $C_0$-sections of $p\colon \mathscr B\to G$ is separable. \label{item:separable}
		\end{enumerate}
\end{defn}
We abbreviate the notation by saying that \emph{$\mathscr B$ is a Fell bundle over $G$}. Note that the restriction $\mathscr B^{(0)}$ of $\mathscr B$ to $G^{(0)}$ is an upper semicontinuous $C^*$-bundle over $G^{(0)}$.

From now on, we keep conditions \eqref{item:saturated} and \eqref{item:separable} in Definition \ref{defn:Fell bundle} as blanket assumptions without mentioning them:

\begin{conv}
	Throughout this paper, all Fell bundles are assumed to be separable and saturated. 
\end{conv}

\begin{rem}\label{rem:imprimitivity bimodule}
Condition \eqref{item:saturated} ensures that for every $g\in G$, $\mathscr B_g$ is a $\mathscr B_{r(g)}$-$\mathscr B_{s(g)}$-imprimitivity bimodule with respect to the obvious module structure and the inner products given by 
	\[\langle a,b\rangle_{\mathscr B_{s(g)}}\coloneqq a^*b,\quad {}_{\mathscr B_{r(g)}}\langle a,b\rangle \coloneqq ab^*,\quad a,b\in \mathscr B_g.\]
\end{rem}	

\begin{defn}\label{defn:C-dyn-sys}
	A \emph{$C^*$-dynamical system} $(G,\mathscr A,\alpha)$ is given by a groupoid $G$, a separable upper semicontinuous $C^*$-bundle $\mathscr A$ over $G^{(0)}$, and a continuous isomorphism 
		\[\alpha\colon \mathscr A\times_{G^{(0)},s}G\to \mathscr A\times_{G^{(0)},r}G\]
	of upper semicontinuous $C^*$-bundles over $G$, satisfying 
		$\alpha_{gh}=\alpha_g\circ \alpha_h$ for all $(g,h)\in G^{(2)}$. 
\end{defn}

Writing $A=C_0(\mathscr A)$, we may alternatively say that $(A,\alpha)$ is a \emph{$G$-$C^*$-algebra} or that $\alpha\colon G\acton A$ is \emph{an action}. Note that if $A$ commutative, then the Gelfand Spectrum $X$ of $A$ is a $G$-space in the sense of Definition \ref{defn:G-space}.

\begin{defn}\label{defn:semidirect product}
	Let $(G,\mathscr A,\alpha)$ be a $C^*$-dynamical system. The associated \emph{semi-direct product bundle} is the Fell bundle $\mathscr A\rtimes_\alpha G\coloneqq \mathscr A\times_{G^{(0)},r}G$, equipped with operations $(a,g)\cdot (b,h)\coloneqq  (a \alpha_g(b),gh)$ and $(a,g)^*\coloneqq (\alpha_{g^{-1}}(a^*),g^{-1})$ for $(g,h)\in G^{(2)}, a\in \mathscr A_{r(g)}$, and $b\in \mathscr A_{r(h)}$. 
\end{defn}

If $\mathscr B$ is a Fell bundle over $G$, we denote by $C_c(G,\mathscr B)$ (or $C_c(\mathscr B)$ if $G$ is understood) the space of compactly supported continuous sections of $p\colon \mathscr B\to G$. We equip $C_c(\mathscr B)$ with a multiplication and involution given by 
	\[f_1*f_2(g)\coloneqq \sum_{h\in G^{r(g)}}f_1(h)f_2(h^{-1}g),\quad f_1^*(g)\coloneqq f_1(g^{-1})^*\]
for $f_1,f_2\in C_c(\mathscr B)$ and $g\in G$. 

\begin{defn}[\cite{Yamagami}]\label{defn:max-cross-sec}
	\begin{sloppypar}
	The \emph{maximal cross-sectional algebra} $C^*(\mathscr B)\coloneqq C^*(G,\mathscr B)$ of $\mathscr B$ is the enveloping $C^*$-algebra of $C_c(\mathscr B)$.
	\end{sloppypar} 
\end{defn}
If $\mathscr B=\mathscr A\rtimes_\alpha G$ is a semi-direct product bundle as in Definition \ref{defn:semidirect product}, we also write $C_0(\mathscr A)\rtimes G\coloneqq C^*(\mathscr A\rtimes_\alpha G)$. 

\begin{rem}
Definition \ref{defn:max-cross-sec} above is equivalent to the definition in \cite[Section 1]{Muhlydisintegration} for the following reason. 
As observed in \cite{BussExel12}, the same argument as in \cite[Prop. 3.14]{ExelCombinatorial} shows that $\|f\|\leq \|f\|_\infty$ for every element $f\in C_c(\mathscr B)$ whose support is contained in an open bisection. 
Together with a partition of unity argument this implies that every representation $\pi$ of $C_c(\mathscr B)$ on a Hilbert space is continuous with respect to the inductive limit topology and therefore a representation in the sense of \cite[Definition 4.7]{Muhlydisintegration}. It then follows from \cite[Theorem 4.13]{Muhlydisintegration} that $\pi$ is \emph{$I$-norm-decreasing} and therefore extends to the maximal cross-sectional algebra as defined in \cite{Muhlydisintegration}.
\end{rem}

\begin{defn}[\cite{kumjan1998Fell}]
	Let $\mathscr B$ be a Fell bundle over $G$. We denote by $L^2(G,\mathscr B)$ (or $L^2(\mathscr B)$ if $G$ is understood) the completion of $C_c(\mathscr B)$ with respect to the $C_c(\mathcal B^{(0)})$-valued inner product given by 
		\[\langle \xi,\eta\rangle (x)\coloneqq \sum_{g\in G_{x}}\xi(g)^*\eta(g),\quad \xi,\eta\in C_c(\mathscr B),x\in G^{(0)}.\]
	The \emph{regular representation} of $\mathscr B$ is the representation 
		\[\Lambda\colon C_c(\mathscr B)\to \mathcal L_{C_0(\mathscr B^{(0)})}(L^2(\mathscr B))\]
		defined by
		\[\Lambda(f)\xi(g)=\sum_{h\in G^{r(g)}} f(h)\xi(h^{-1}g),~f\in C_c(\mathscr B),\xi\in L^2(\mathscr B),g\in G.\]
	We call $C^*_r(\mathcal B)\coloneqq \overline{\Lambda(C_c(\mathscr B))}^{\|\cdot\|}$ the \emph{reduced cross-sectional algebra of $\mathscr B$}.
\end{defn}

If $\mathscr B=\mathscr A\rtimes_\alpha G$ is a semi-direct product bundle as in Definition \ref{defn:semidirect product}, we also write $C_0(\mathscr A)\rtimes_r G\coloneqq C^*_r(\mathscr A\rtimes_\alpha G)$.  

Like upper semicontinuous $C^*$-bundles, Fell bundles can be constructed by prescribing their algebraic structure and a set of continuous sections. 
 This idea goes back to \cite[II.13.18]{felldoran88} and is spelled out in \cite[Prop. 2.5]{takeishi} for \emph{continuous} Fell bundles.

\begin{defn}\label{defn:algebraic-Fell-bundle}
	An \emph{algebraic Fell bundle} over $G$ is a set $\mathscr B$ together with a projection map $p\colon \mathscr B\to G$, a Banach space structure on $\mathscr B_g\coloneqq p^{-1}(g)$ for each $g\in G$, a multiplication map $\cdot\colon \mathscr B\times \mathscr B\to \mathscr B$, and an involution map $*\colon \mathscr B\to \mathscr B$ satisfying the conditions \eqref{item:first Fell condition}-\eqref{item:saturated} of Definition \ref{defn:Fell bundle}.\footnote{In other words, an algebraic Fell bundle over $G$ is a (possibly non-separable) Fell bundle over $G$, considered as a discrete groupoid.} 
\end{defn}

\begin{prop}[cp. {\cite[Prop. 2.5]{takeishi}}]\label{prop:algebraic Fell bundles}
	Let $p\colon \mathscr B\to G$ be an algebraic Fell bundle over $G$. Let $A_0$ be a countably generated $*$-algebra of compactly supported sections of $\mathscr B$ satisfying
	\begin{enumerate}
		\item For every $g\in G$, the set $\{f(g)\colon f\in A_0\}$ is dense in $\mathscr B_g$. \label{item:fiberwise dense}
		\item For every $f\in A_0$, the map $g\mapsto \|f(g)\|$ is upper semicontinuous. \label{item:upper semicontinuous}
	\end{enumerate}
	Then there is a unique topology on $\mathscr B$ turning it into a Fell bundle such that all sections in $A_0$ are continuous. 
	If furthermore $A_0$ is closed under pointwise multiplication with functions in $C_c(G)$, then $A_0$ is dense in $C_c(\mathscr B)$ in the inductive limit topology. In particular, $A_0$ is norm dense in $C^*(\mathscr B)$. 
\end{prop}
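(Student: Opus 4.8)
The plan is to first equip $\mathscr B$ with a topology coming only from its Banach-space fibres and the sections in $A_0$, using the standard bundle-construction machinery, and then to verify that the algebraic operations are continuous for this topology, with \'etaleness entering decisively in the multiplicative axiom. The density statement is then a separate, more routine argument.

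\textbf{Constructing the topology.} Since $A_0$ is in particular a complex-linear space of sections which is fibrewise dense by \eqref{item:fiberwise dense} and each of whose members has upper semicontinuous norm by \eqref{item:upper semicontinuous}, the bundle-construction theorem (the upper semicontinuous version of \cite[II.13.18]{felldoran88}, going back to the same source cited for Proposition~\ref{prop:algebraic Fell bundles}) produces a \emph{unique} topology on $\mathscr B$ making $p$ continuous, turning $\mathscr B$ into an upper semicontinuous Banach bundle, and rendering every section in $A_0$ continuous. Uniqueness is built into that statement, so once continuity of multiplication and involution is checked, all of Definition~\ref{defn:Fell bundle} holds and the first assertion follows.

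\textbf{Continuity of the operations.} For involution I would use the criterion that a map into an upper semicontinuous Banach bundle is continuous once it covers a continuous map of base spaces and is locally approximated in fibre-norm by continuous sections: near $b_0\in\mathscr B_{g_0}$ choose $f\in A_0$ with $f(g_0)\approx b_0$; then $b^*$ is approximated by $f(p(b))^*=f^*(p(b)^{-1})$, and $f^*\in A_0$ is continuous while $b\mapsto p(b)^{-1}$ is continuous, so $*$ is continuous. The multiplication $\cdot\colon\mathscr B^{(2)}\to\mathscr B$ is the main obstacle. The same criterion reduces the problem, after absorbing fibre-norm submultiplicativity and the continuity of approximants $f_1,f_2\in A_0$ of $b_0,c_0$, to showing that for fixed $f_1,f_2\in A_0$ the two-variable assignment $(g,h)\mapsto f_1(g)f_2(h)\in\mathscr B_{gh}$ is a continuous section of $m^*\mathscr B$ over $G^{(2)}$. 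This is where \'etaleness is essential: choosing open bisections $U\ni g_0$, $V\ni h_0$, the product $UV$ is again a bisection and elements of $UV$ factor uniquely and continuously as $g=\phi(g)\psi(g)$ with $\phi(g)\in U$, $\psi(g)\in V$; on bisection-supported sections the convolution $*$ agrees with the pointwise fibre product, so multiplying $f_1,f_2$ by continuous bump functions supported in $U,V$ (which keeps them continuous, a Banach-bundle fact independent of multiplicative continuity) realizes $(g,h)\mapsto f_1(g)f_2(h)$ locally as a convolution product. The delicate point I expect to fight with is precisely verifying that this localized convolution is a continuous section \emph{for the topology just built}, i.e.\ reconciling the convolution $*$-algebra structure of $A_0$ with the pointwise fibre multiplication; the bisection factorization is exactly the device that makes the two coincide locally.

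\textbf{Density.} For the final claim, assume $A_0$ is closed under pointwise multiplication by $C_c(G)$. Then $A_0$ is a fibrewise-dense $C_c(G)$-submodule of continuous sections, so a partition-of-unity argument (the section form of Stone--Weierstrass for Banach bundles, as in \cite{felldoran88}) shows that $A_0$ is dense in $C_c(\mathscr B)$ for the inductive limit topology. Finally, since every representation of $C_c(\mathscr B)$ is continuous for the inductive limit topology---as recorded in the remark following Definition~\ref{defn:max-cross-sec}---inductive-limit density upgrades to density in the maximal norm, giving that $A_0$ is norm dense in $C^*(\mathscr B)$.
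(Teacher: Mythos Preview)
Your proposal is correct and follows essentially the same route as the paper: both invoke the standard Banach-bundle construction for the topology, both use a partition-of-unity argument for the density claim, and for continuity of multiplication and involution the paper simply cites \cite[Prop.~2.5]{takeishi} while you sketch that very argument. The ``delicate point'' you flag---reconciling the convolution $*$-algebra structure of $A_0$ with pointwise fibre multiplication via the bisection factorization---is exactly the step the paper delegates to Takeishi, and \'etaleness is indeed the essential ingredient there.
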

\begin{proof}
		By the same proof as in \cite[Thm. C25]{williams07}, or by the combination of \cite[Prop. 3.6]{HofmannBundles} and the proof of \cite[Prop. C20]{williams07}, there is a unique topology on $\mathscr B$ turning it into an upper semicontinuous Banach bundle over $G$ such that all the sections in $A_0$ are continuous. By the same argument as in \cite[Prop. 2.5]{takeishi}, the multiplication and involution on $\mathscr B$ are continuous. Note that $\mathscr B$ is separable since $A_0$ is countably generated. 
		
		We prove that $A_0$ is dense in $C_c(\mathscr B)$ with respect to the inductive limit topology. Fix a function $f\in C_c(\mathscr B)$, a relatively compact open neighbourhood $U$ of $\supp f$ and $\varepsilon>0$. For each $x\in \supp f$, pick a function $f_x\in A_0$ with $\|f(x)-f_x(x)\|<\frac \varepsilon 3$ and an open neighbourhood $U_x\subseteq U$ of $x$ such that for each $y\in U_x$, we have $\|f(x)-f(y)\|<\frac \varepsilon 3$ and $\|f_x(x)-f_x(y)\|< \frac \varepsilon 3$. By compactness, there is a finite set $\{x_1,\dotsc,x_n\}\subseteq \supp f$ such that $\supp f\subseteq U_{x_1}\cup\dotsb\cup U_{x_n}$. Now pick continuous functions $\varphi_1,\dotsc,\varphi_n\colon G\to [0,1]$ such that $\supp \varphi_i\subseteq U_{x_i}$ and such that $\sum_{i=1}^n \varphi_i(x)=1$ for all $x\in \supp f$. Then $f'\coloneqq \sum_i \varphi_if_{x_i}$ belongs to $A_0$ by assumption. We have $\|f-f'\|_\infty<\varepsilon$ and $\supp f'\subseteq \overline{U}$ where the compact set $\overline{U}$ only depends on $f$. Thus, $A_0$ is dense in $C_c(\mathscr B)$ in the inductive limit topology. 
\end{proof}

\begin{rem}
	If one strengthens density to equality in condition \eqref{item:fiberwise dense} of the above proposition, then one can even drop the assumption that $G$ is Hausdorff (see \cite[Prop. 2.4]{BussExel12}). If $A_0$ is not closed under multiplication by $G$, it may fail to be dense in $C_c(\mathscr B)$. This happens for instance if $\mathscr B=\C\times G$ is the trivial bundle over a finite group $G$ and if $A_0$ is generated by the projection $p\coloneqq \frac 1 {|G|}\sum_{g\in G}(1,g)$.
\end{rem}

We wrap up the preliminaries about Fell bundles by discussing their representation theory. This is the main point where our separability assumptions are used. 

\begin{defn}[{\cite{Muhlydisintegration}}]\label{defn:Borel*functor}
	Given a Fell bundle $\mathscr B$ over a groupoid $G$ and a Hilbert bundle $\mathscr H$ over $G^{(0)}$, a \emph{$*$-functor} is a map 
		\[\pi\colon \mathscr B\to \Hom(\mathscr H),\quad \mathscr B_g\ni b\mapsto \pi_g(b)\in \mathcal L(\mathscr H_{s(g)},\mathscr H_{r(g)})\]
	such that 
		\begin{enumerate}
			\item For each $g\in G$, the map $\pi_g\colon \mathscr B_g\to \mathcal L(\mathscr H_{s(g)},\mathscr H_{r(g)})$ is linear.
			\item $\pi_{gh}(ab)=\pi_g(a)\pi_h(b)$ for all $(g,h)\in G^{(2)}, a\in \mathscr B_g$ and $b\in \mathscr B_h$.
			\item $\pi_g(a)^*=\pi_{g^{-1}}(a^*)$ for all $g\in G$ and $a\in \mathscr B_g$.
		\end{enumerate}
	We call $\pi$ \emph{Borel}, if for each $f\in C_c(\mathscr B)$ and any two Borel sections $\xi,\eta\colon G^{(0)}\to \mathscr H$, the map
		\[G\ni g\mapsto \left\langle \xi(r(g)),\pi_g(f(g))\eta(s(g))\right\rangle \in \C\]
	is Borel.
\end{defn}

\begin{defn}[{cp. \cite{Muhlydisintegration}}]
	Let $\mathscr B$ be a Fell bundle over $G$. A \emph{strict representation} $(\mu,\mathscr H,\pi)$ of $\mathscr B$ is given by a quasi-invariant measure $\mu$ on $G^{(0)}$, a Hilbert bundle $\mathscr H$ over $G^{(0)}$ and a Borel $*$-functor $\pi\colon \mathscr B\to \Hom(\mathscr H)$, such that $\pi_x\colon \mathscr B_x\to \mathcal L(\mathscr H_x)$ is non-degenerate for $\mu$-almost every $x\in G^{(0)}$. 
\end{defn}
In contrast to our definition, the strict representations in  \cite{Muhlydisintegration} are allowed to be degenerate. 

\begin{prop}[{\cite[Prop. 4.10]{Muhlydisintegration}}]
	Let $(\mu,\mathscr H,\pi)$ be a strict representation of a Fell bundle $\mathscr B$ over $G$ and denote by $\Delta\colon G\to (0,\infty)$ the modular function of $\mu$. Then the map $L_\pi\colon C_c(\mathscr B)\to \mathcal L(L^2(\mathscr H))$ given by 
		\[L_\pi(f)\xi(x)=\sum_{g\in G^x}\pi_g(f(g))\xi(s(g))\Delta^{-\frac 1 2}(g)\]
		for $f\in C_c(\mathscr B),\xi\in L^2(\mathscr H)$, and $x\in G^{(0)}$
	is a non-degenerate representation.  
\end{prop}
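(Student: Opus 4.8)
The plan is to check in turn that $L_\pi$ lands in $\mathcal L(L^2(\mathscr H))$, that it is multiplicative and $*$-preserving, and finally that it is non-degenerate, using étaleness to collapse the fiber sums on bisections and using quasi-invariance of $\mu$ through the modular identity. First I would treat well-definedness and boundedness. Since $G$ has a basis of open bisections and $\supp f$ is compact, a partition of unity lets me write $f=\sum_i f_i$ with each $f_i$ supported in an open bisection $U$, and $L_\pi$ is visibly linear, so it suffices to bound $L_\pi(f)$ for $f$ supported in a bisection. There the defining sum collapses to the single term coming from the unique $g\in U\cap G^x$, which reduces measurability of $x\mapsto L_\pi(f)\xi(x)$ to the Borel condition on $\pi$ together with the homeomorphism properties of $r|_U$ and $s|_U$. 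For the norm I would compute $\|L_\pi(f)\xi\|^2=\int_{G^{(0)}}\sum_{g\in G^x\cap U}\|\pi_g(f(g))\xi(s(g))\|^2\Delta^{-1}(g)\,d\mu(x)$ and convert the $G^x$-sum into a $G_x$-sum by the modular identity
\[\int_{G^{(0)}}\sum_{g\in G^x}F(g)\,d\mu(x)=\int_{G^{(0)}}\sum_{g\in G_x}\Delta(g)F(g)\,d\mu(x),\]
which follows from $\Delta=\frac{d(\mu\circ\lambda)}{d(\mu\circ\lambda)^{-1}}$ and the substitution $g\mapsto g^{-1}$. The factor $\Delta^{-1}$ is cancelled exactly, leaving $\int_{s(U)}\|\pi_g(f(g))\xi(x)\|^2\,d\mu(x)$; since each $\pi_g$ is contractive (because $\pi_g(a)^*\pi_g(a)=\pi_{s(g)}(a^*a)$ and $\pi_{s(g)}$ is a $*$-homomorphism), this is at most $\|f\|_\infty^2\|\xi\|^2$, so $\|L_\pi(f)\|\leq \|f\|_\infty$.

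Next I would verify multiplicativity and $*$-preservation, which are formal once the modular identity and the homomorphism property of $\Delta$ (Lemma \ref{lem:hom-almost-everywhere}) are in hand. Expanding $L_\pi(f_1)L_\pi(f_2)\xi(x)$ yields a double sum over pairs $(g,h)$ with $r(g)=x$ and $r(h)=s(g)$; the $*$-functor identity $\pi_g(a)\pi_h(b)=\pi_{gh}(ab)$ together with $\Delta^{-1/2}(g)\Delta^{-1/2}(h)=\Delta^{-1/2}(gh)$ lets me reindex by $k=gh$ and recover exactly the convolution $L_\pi(f_1*f_2)\xi(x)$. For the adjoint I would show $\langle L_\pi(f)\xi,\eta\rangle=\langle\xi,L_\pi(f^*)\eta\rangle$: moving $\pi_g(f(g))$ across the inner product via $\pi_g(a)^*=\pi_{g^{-1}}(a^*)$, converting the $G^x$-sum into a $G_x$-sum (now the surviving factor is $\Delta^{1/2}$), and substituting $g\mapsto g^{-1}$ turns $\Delta^{1/2}(g)$ into $\Delta^{-1/2}(g^{-1})$ and $f(g)^*$ into $f^*(g^{-1})$, reproducing $L_\pi(f^*)$.

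For non-degeneracy I would test against sections supported in the unit bundle. For $f\in C_c(\mathscr B^{(0)})\subseteq C_c(\mathscr B)$ the formula collapses, since $G^x\cap G^{(0)}=\{x\}$ and $\Delta(x)=1$ on units, to the diagonal operator $L_\pi(f)\xi(x)=\pi_x(f(x))\xi(x)$. Taking a countable approximate unit $(u_n)$ of the separable $C^*$-algebra $C_0(\mathscr B^{(0)})$, its image $(u_n(x))$ is an approximate unit of each fiber $\mathscr B_x=\mathscr B^{(0)}_x$, so non-degeneracy of $\pi_x$ for $\mu$-almost every $x$ gives $\pi_x(u_n(x))\to 1$ strongly on $\mathscr H_x$. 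Since $\|\pi_x(u_n(x))\xi(x)\|\leq\|\xi(x)\|$, dominated convergence yields $L_\pi(u_n)\xi\to\xi$ in $L^2(\mathscr H)$, establishing that $L_\pi$ is non-degenerate.

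I expect the main obstacle to be the first step: organizing the reduction to bisections cleanly and performing the change of variables so that the modular factor $\Delta^{-1}$ (and, in the adjoint computation, $\Delta^{1/2}$) cancels exactly. The remaining algebra is purely formal, and the verification that $L_\pi(f)\xi$ is a genuinely measurable, square-integrable section, although routine, is also smoothest to carry out fiberwise on bisections using the Borel hypothesis on the $*$-functor.
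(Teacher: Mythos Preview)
The paper does not supply its own proof of this proposition; it simply quotes it from \cite[Prop.~4.10]{Muhlydisintegration} and moves on. Your argument is correct and is precisely the standard verification one carries out in the \'etale setting: reduce boundedness to sections supported in a single bisection via a partition of unity, use the modular identity to cancel the $\Delta^{-1}$ in the norm computation and the $\Delta^{1/2}$ in the adjoint computation, use the homomorphism property of $\Delta$ (Lemma~\ref{lem:hom-almost-everywhere}) together with the $*$-functor identities for multiplicativity, and check non-degeneracy against an approximate unit in $C_c(\mathscr B^{(0)})$ acting as diagonal operators. There is nothing to compare against in the paper itself.
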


The representation $L_\pi$ is called the \emph{integrated form} of $(\mu,\mathscr H,\pi)$. By definition, it extends to a representation of $C^*(\mathscr B)$. 
The following generalization of Renault's disintegration theorem shows that every non-degenerate representation of $C^*(\mathscr B)$ arises in this way:

\begin{thm}[{\cite[Thm. 4.13]{Muhlydisintegration}}]\label{thm:Fell-disintegration}
	Let $\mathscr B$ be a Fell bundle over $G$. Then every non-degenerate representation of $C^*(\mathscr B)$ on a separable Hilbert space is equivalent to the integrated form of a strict representation. 
\end{thm}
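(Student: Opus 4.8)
The plan is to adapt Renault's disintegration argument to the Fell bundle setting, reconstructing the data $(\mu,\mathscr H,\pi)$ from a given non-degenerate representation $L\colon C^*(\mathscr B)\to \mathcal L(\mathcal H)$ on a separable Hilbert space $\mathcal H$. As recorded in the remark following Definition \ref{defn:max-cross-sec}, the restriction of $L$ to $C_c(\mathscr B)$ is $I$-norm decreasing and continuous for the inductive limit topology, so I would work throughout at this level. The first step is to produce $\mu$ and $\mathscr H$. Since $G$ is \'etale, $G^{(0)}$ is clopen in $G$, so the unit bundle makes $C_0(\mathscr B^{(0)})$ a subalgebra of $C^*(\mathscr B)$; a standard approximate-unit argument (using that $e_i*f(g)=e_i(r(g))f(g)$ for $e_i\in C_0(\mathscr B^{(0)})$) shows it acts non-degenerately on $\mathcal H$. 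Restricting further to the abelian algebra $C_0(G^{(0)})\subseteq ZM(C_0(\mathscr B^{(0)}))$ and invoking the spectral theorem together with direct integral theory (building the bundle via Theorem \ref{thm:construct Hilbert bundle}), I obtain a $\sigma$-finite measure $\mu$ on $G^{(0)}$, a Hilbert bundle $\mathscr H$ over $G^{(0)}$, and a unitary identification $\mathcal H\cong L^2(G^{(0)},\mathscr H,\mu)$ under which $C_0(G^{(0)})$ acts by diagonal operators.

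The second step disintegrates the unit fiber. The restriction $L|_{C_0(\mathscr B^{(0)})}$ commutes with $C_0(G^{(0)})$ and hence with all diagonal operators, so by Proposition \ref{direct integral representation} it decomposes as $\int_{G^{(0)}}\pi_x\,d\mu(x)$ for a Borel family of representations $\pi_x\colon \mathscr B_x\to \mathcal L(\mathscr H_x)$, non-degenerate for $\mu$-almost every $x$. This furnishes the sought $*$-functor on the unit fibers.

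The third and most delicate step extends $x\mapsto\pi_x$ to a Borel $*$-functor $g\mapsto \pi_g$ on all of $\mathscr B$ and establishes quasi-invariance of $\mu$. For $f\in C_c(\mathscr B)$ supported in an open bisection $U$, convolution by $f$ implements the partial homeomorphism $r\circ(s|_U)^{-1}$ on the base, and the covariance relations $L(\phi)L(f)=L((\phi\circ r)f)$ and $L(f)L(\phi)=L(f(\phi\circ s))$ for $\phi\in C_0(G^{(0)})$ force $L(f)$ to be decomposable relative to the graph of this homeomorphism. Exploiting saturation (Remark \ref{rem:imprimitivity bimodule}), which makes each $\mathscr B_g$ a $\mathscr B_{r(g)}$-$\mathscr B_{s(g)}$-imprimitivity bimodule, together with a fundamental sequence for $\mathscr H$, I would extract from these decompositions a measurable field $g\mapsto\pi_g(f(g))\in \mathcal L(\mathscr H_{s(g)},\mathscr H_{r(g)})$; covering $G$ by countably many bisections and matching on overlaps (appealing to the essential uniqueness in Theorem \ref{Thm unique decomposition}) then yields a single Borel assignment $g\mapsto\pi_g$. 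Comparing $L(f^*)$ with $L(f)^*$ shows that $\mu\circ\lambda$ is equivalent to its inverse, so $\mu$ is quasi-invariant, and it pins the modular function $\Delta$ into the half-density $\Delta^{-1/2}$ appearing in the integrated form.

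Finally I would discard a $\mu$-conull set on which the $*$-functor identities of Definition \ref{defn:Borel*functor} hold everywhere (multiplicativity and adjoint-compatibility descending from the corresponding relations in $C_c(\mathscr B)$ via the unique decomposition theorem) and verify by a direct computation on $C_c(\mathscr B)$ that $L$ coincides with the integrated form $L_\pi$. The main obstacle is the third step: producing $g\mapsto\pi_g$ as a genuine Borel $*$-functor --- Borel in $g$ and satisfying the functor identities after removing a single null set --- rather than merely a measurable field defined up to null sets. Controlling these null-set ambiguities coherently across a countable generating family of sections is precisely where separability of $\mathscr B$ and $\mathcal H$ and the measurable-selection technology underlying the direct integral machinery become indispensable.
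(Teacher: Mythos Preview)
The paper does not give a proof of this theorem; it is simply quoted from \cite[Thm.~4.13]{Muhlydisintegration} and used as a black box. So there is nothing in the paper to compare your proposal against. Your sketch is a reasonable outline of the Renault-style disintegration argument as adapted by Muhly--Williams to the Fell bundle setting: produce $\mu$ and $\mathscr H$ from the $C_0(G^{(0)})$-action, disintegrate the unit-fiber representation via Proposition~\ref{direct integral representation}, then extend over bisections using saturation and uniqueness of decompositions, and finally clean up null sets. This is indeed the shape of the argument in \cite{Muhlydisintegration}, though the technical work in the third step (the coherent Borel extension and the quasi-invariance of $\mu$) is substantial and not something one can fully justify at the level of detail you give; you correctly flag this as the main obstacle.
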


For semi-direct product bundles associated to $C^*$-dynamical systems we can say a bit more:

\begin{defn}
	 A \emph{covariant representation} $(\mu,\mathscr H,\pi,U)$ of a $C^*$-dynamical system $(G,\mathscr A,\alpha)$ consists of 
		\begin{enumerate}
			\item a quasi-invariant measure $\mu$ on $G^{(0)}$,
			\item a Hilbert bundle $\mathscr H$ over $G^{(0)}$,
			\item a non-degenerate $C_0(G^{(0)})$-linear representation $\pi\colon C_0(\mathscr A)\to \mathcal L(L^2(\mathscr H))$,
			\item a Borel homomorphism $U\colon G\to \Iso(\mathscr H),\quad g\mapsto U_g\in \mathcal U(\mathscr H_{s(g)},\mathscr H_{r(g)})$,
		\end{enumerate}			
	such that for $\mu\circ \lambda$-almost all $g\in G$, we have 
		\begin{equation}\label{eq:covariance condition}
			\pi_{r(g)}(\alpha_g(a))=U_g \pi_{s(g)}(a)U_g^*,\quad \forall a\in \mathscr A_{s(g)},
		\end{equation}
	where $\left(\pi_x\colon \mathscr A_x\to \mathcal L(\mathscr H_x)\right)_{x\in G^{(0)}}$ is any Borel family of representations satisfying $\pi=\int_{G^{(0)}}\pi_xd\mu(x)$ as in Proposition \ref{direct integral representation}.
\end{defn}

Note that set of all $g\in G$ satisfying \eqref{eq:covariance condition} always contains a set of the form $G|_{\mathcal U}$ for a $G$-invariant $\mu$-conull subset $\mathcal U\subseteq G^{(0)}$ (see \cite[Rem 7.10.]{MuhlyWilliams2008crossedproducts}). 

\begin{defn}
The \emph{integrated form} of a covariant representation $(\mu,\mathscr H,\pi,U)$ as above is the representation
	\[ 
		\pi\rtimes U\colon C_c(G,r^*\mathscr A)\to \mathcal L(L^2(\mathscr H))
	\]
	given by
		\[\pi\rtimes U(f)\xi(x)\coloneqq \sum_{g\in G^x}\pi(f(g))U_g(\xi(s(g)))\Delta^{-\frac 1 2}(g),\]
	for $f\in C_c(G,r^*\mathscr A),~\xi\in L^2(\mathscr H)$ and $x\in \GG$, where $\Delta$ denotes the modular function of $\mu$.
\end{defn}

By definition, $\pi\rtimes U$ as above extends to a representation of $C_0(\mathscr A)\rtimes G$. We have the following converse, known as \emph{Renault's disintegration theorem}.
\begin{thm}[{\cite[Lem. 4.6]{Renault1987Representations}, see also \cite[Thm. 7.12]{MuhlyWilliams2008crossedproducts}}]\label{thm:covariant-disintegration}
	Let $(G,\mathscr A,\alpha)$ be a $C^*$-dynamical system. Then every non-degenerate representation of $C_0(\mathscr A)\rtimes G$ on a separable Hilbert space is equivalent to the integrated form of a covariant representation.
\end{thm}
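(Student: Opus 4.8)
The plan is to deduce this from the Fell bundle disintegration theorem (Theorem~\ref{thm:Fell-disintegration}) applied to the semi-direct product bundle $\mathscr B=\mathscr A\rtimes_\alpha G$, using that $C_0(\mathscr A)\rtimes G = C^*(\mathscr B)$ by definition (Definition~\ref{defn:semidirect product} and the notation following Definition~\ref{defn:max-cross-sec}). Given a non-degenerate representation of $C_0(\mathscr A)\rtimes G$ on a separable Hilbert space, Theorem~\ref{thm:Fell-disintegration} produces a quasi-invariant measure $\mu$, a Hilbert bundle $\mathscr H$ over $G^{(0)}$, and a Borel $*$-functor $\rho\colon \mathscr B\to \Hom(\mathscr H)$ whose integrated form $L_\rho$ is equivalent to the given representation. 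Since the fiber of $\mathscr B$ over $g$ is $\mathscr A_{r(g)}$, the functor has the shape $\rho_g\colon \mathscr A_{r(g)}\to \mathcal L(\mathscr H_{s(g)},\mathscr H_{r(g)})$, and the task is to disassemble $\rho$ into a covariant pair $(\pi,U)$.

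For the representation part, I would restrict $\rho$ to the unit fibers: setting $\pi_x\coloneqq \rho_x\colon \mathscr A_x\to \mathcal L(\mathscr H_x)$ yields a Borel family of representations, non-degenerate for $\mu$-almost every $x$, which assembles into a non-degenerate $C_0(G^{(0)})$-linear representation $\pi=\int_{G^{(0)}}\pi_x\,d\mu(x)$ of $C_0(\mathscr A)$. For the unitary part, fix an approximate unit $(e_i)$ of $\mathscr A_{r(g)}$ and define $U_g$ as the strong limit of $\rho_g(e_i)$. The $*$-functor identities, combined with the multiplication and involution of $\mathscr B$ from Definition~\ref{defn:semidirect product}, give for $\mu$-almost every $g$ the factorization $\rho_g(a)=\pi_{r(g)}(a)\,U_g$ together with the relations $U_g^*U_g=\mathrm{id}_{\mathscr H_{s(g)}}$, $U_gU_g^*=\mathrm{id}_{\mathscr H_{r(g)}}$, $U_{gh}=U_gU_h$, and the covariance $U_g\pi_{s(g)}(a)U_g^*=\pi_{r(g)}(\alpha_g(a))$. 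Each of these is a short computation with the approximate unit, using that $\mathscr A_x$ has an approximate unit and that the non-degenerate $\pi_x$ send it to $\mathrm{id}_{\mathscr H_x}$ strongly. Substituting $\rho_g(f(g))=\pi(f(g))U_g$ into the integrated-form formula then identifies $L_\rho$ with $\pi\rtimes U$.

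The main obstacle is measure-theoretic: upgrading $g\mapsto U_g$ to an honest Borel homomorphism $U\colon G\to \Iso(\mathscr H)$ that is unitary everywhere and multiplicative everywhere, rather than merely $\mu\circ\lambda$-almost everywhere. Borel-ness itself follows from the Borel-ness of $\rho$ by realizing $U_g$ as a strong limit along a fixed fundamental sequence of sections of $\mathscr A$. The everywhere homomorphism property, however, requires discarding an inessential (null, $G$-invariant) part of $G^{(0)}$ and correcting $U$ on the complement; this is the standard inessential-reduction argument of Ramsay, which is exactly the technical core of Renault's original proof and is already packaged inside the proof of Theorem~\ref{thm:Fell-disintegration}. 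Once $U$ is a genuine Borel homomorphism over an invariant $\mu$-conull set, the tuple $(\mu,\mathscr H,\pi,U)$ is a covariant representation whose integrated form is equivalent to the representation we started with, which completes the argument.
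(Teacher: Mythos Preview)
The paper does not give its own proof of this theorem; it is simply quoted from \cite{Renault1987Representations} and \cite{MuhlyWilliams2008crossedproducts} as a known result, so there is no in-paper argument to compare against. Your route---specializing Theorem~\ref{thm:Fell-disintegration} to the semi-direct product bundle and then splitting the resulting Borel $*$-functor $\rho$ into a covariant pair $(\pi,U)$ via an approximate unit---is correct in outline and is in fact the natural way to recover Renault's theorem once one has the Fell bundle version. The computations you sketch (the factorization $\rho_g(a)=\pi_{r(g)}(a)U_g$, unitarity, the homomorphism property, and covariance) all go through using the Fell bundle multiplication of Definition~\ref{defn:semidirect product}, and Borel-ness of $g\mapsto U_g$ follows by realizing $U_g$ as a strong limit of $\rho_g(u_n(r(g)))$ for a fixed countable approximate unit $(u_n)$ of $C_0(\mathscr A)$. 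The one genuine subtlety you identify---upgrading from an almost-everywhere homomorphism to an everywhere Borel homomorphism on an invariant conull set---is exactly Ramsay's inessential-reduction, and you are right that it is the same technical step already absorbed into the proof of Theorem~\ref{thm:Fell-disintegration}. Historically the implication runs the other way (Renault's result predates and is generalized by the Fell bundle version), but logically your reduction is sound.
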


\subsection{$G$-von Neumann algebras}

We recall the definition and basic properties of $G$-von Neumann algebras (or $W^*$-dynamical systems) as introduced in \cite{yamanouchiduality} (see also \cite{OtyRamsay2006GvNA}). 
For more background on bundles of von Neumann algebras, we refer the reader to \cite[Ch. IV.8]{takesaki1}.

\begin{defn}
	Let $X$ be a standard Borel space. A \emph{bundle of von Neumann algebras over $X$} is a Borel subbundle $\mathscr M\subseteq \End(\mathscr H)$ for some Hilbert bundle $\mathscr H$ over $X$, such that $\mathscr M_x\subseteq \mathcal L(\mathscr H_x)$ is a von Neumann algebra for every $x\in X$. 
	For a measure $\mu$ on $X$, we denote by 
		\[L^\infty(X,\mathscr M,\mu)\subseteq \mathcal L(L^2(X,\mathscr H,\mu))\]
	(or $L^\infty(\mathscr M)$ if $X$ and $\mu$ are understood)
	the von Neumann algebra generated by all $\mu$-essentially bounded Borel sections $X\to \mathscr M$. 
\end{defn}

Note that $L^\infty(\mathscr M)$ consists of decomposable operators. Conversely we have:

\begin{thm}[{\cite[Ch. IV, Thm. 8.21]{takesaki1}}]
	Let $X$ be a standard Borel space equipped with a measure $\mu$. Let $H$ be a separable Hilbert space and $M\subseteq \mathcal L(H)$ a von Neumann algebra together with a unital inclusion $\iota\colon L^\infty(X,\mu)\hookrightarrow Z(M)$. Then there is a Hilbert bundle $\mathscr H$ over $X$, a bundle of von Neumann algebras $\mathscr M\subseteq \End(\mathscr H)$ and a unitary $U\colon L^2(X,\mathscr H,\mu)\xrightarrow{\cong}H$ such that $\Ad(U)$ restricts to an isomorphism $L^\infty(X,\mathscr M,\mu)\xrightarrow{\cong}M$ with $\Ad(U)|_{L^\infty(X,\mu)}=\iota$.
\end{thm}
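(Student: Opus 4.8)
The plan is to realize $H$ as a direct integral over $(X,\mu)$ in which $\iota(L^\infty(X,\mu))$ becomes exactly the diagonal algebra, and then to decompose $M$ fiberwise using the characterization of decomposable operators as the commutant of the diagonals (Theorem \ref{Thm unique decomposition}). First I would construct the Hilbert bundle $\mathscr H$ and the unitary $U$. Since $H$ is separable and $A\coloneqq \iota(L^\infty(X,\mu))$ is abelian, I would split $H$ into a countable orthogonal sum of cyclic subspaces $\overline{A\zeta_k}$. For each $k$, the positive functional $f\mapsto \langle \iota(f)\zeta_k,\zeta_k\rangle$ on $L^\infty(X,\mu)$ is represented, by $\sigma$-finiteness and Radon--Nikodym, as integration against a density $\rho_k\in L^1(\mu)_+$; absolute continuity holds because $\iota$ factors through $L^\infty(\mu)$. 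I would then set $\mathscr H_x\coloneqq \ell^2\bigl(\{k : \rho_k(x)>0\}\bigr)$, with fundamental sections given by the indicator-weighted standard basis vectors. Condition \eqref{item:Hilbert2} of Definition \ref{defn:Hilbert bundle} holds because the inner-product fields are the Borel functions $\mathbf 1_{\{\rho_k>0\}}$, and condition \eqref{item:Hilbert3} holds by construction, so Theorem \ref{thm:construct Hilbert bundle} yields a Hilbert bundle $\mathscr H$. The standard isometries $L^2(X,\rho_k\mu)\cong \overline{A\zeta_k}$, $f\mapsto \iota(f)\zeta_k$, assemble into a unitary $U\colon L^2(X,\mathscr H,\mu)\xrightarrow{\cong}H$ carrying the multiplication operator $M_f$ to $\iota(f)$, giving $\Ad(U)|_{L^\infty(X,\mu)}=\iota$.

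Next I would transport $M$ to $\tilde M\coloneqq \Ad(U)^{-1}(M)=U^*MU$. Since $A\subseteq Z(M)$, the diagonal operators lie in $Z(\tilde M)$, so every $T\in \tilde M$ commutes with all diagonal operators and is therefore decomposable by Theorem \ref{Thm unique decomposition}, say $T=\int_X T_x\,d\mu(x)$ with essentially unique field $x\mapsto T_x$. Using separability of $H$, I would choose a $\sigma$-strongly dense sequence $(T_n)$ in $\tilde M$, decompose $T_n=\int_X (T_n)_x\,d\mu(x)$, and define $\mathscr M_x$ to be the von Neumann algebra generated by $\{(T_n)_x : n\in \N\}$ in $\mathcal L(\mathscr H_x)$. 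The task is then to check that these fibers assemble into a genuine Borel subbundle $\mathscr M\subseteq \End(\mathscr H)$, so that $\mathscr M$ is a bundle of von Neumann algebras; measurability of $x\mapsto \mathscr M_x$ would follow from measurability of the generating fields together with a measurable bicommutant argument.

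It remains to identify $L^\infty(X,\mathscr M,\mu)$ with $\tilde M$. The inclusion $\tilde M\subseteq L^\infty(\mathscr M)$ I expect to be routine: every $T\in \tilde M$ is a $\sigma$-strong limit of elements of the $*$-algebra generated by the $T_n$, and after passing to a subsequence the corresponding fiber fields converge almost everywhere, placing $T_x$ in $\mathscr M_x$ a.e. For the reverse inclusion I would prove the commutation theorem for direct integrals: since the diagonal algebra sits inside $L^\infty(\mathscr M)'$, Theorem \ref{Thm unique decomposition} forces every element of $L^\infty(\mathscr M)'$ to be decomposable, and a fiberwise analysis identifies $L^\infty(\mathscr M)'=L^\infty(\mathscr M')$, where $\mathscr M'_x\coloneqq (\mathscr M_x)'$. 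Applying this twice and using $(\mathscr M_x)''=\mathscr M_x$ gives $L^\infty(\mathscr M)''=L^\infty(\mathscr M)$, so $L^\infty(\mathscr M)$ is a von Neumann algebra that both contains and is contained in $\tilde M$ once the generators are matched, whence equality. The hard part throughout is precisely this measurable selection and fiberwise commutation theory: verifying that $x\mapsto \mathscr M_x$ is Borel and that taking commutants commutes with direct integration. This is von Neumann's reduction theory (as developed in Dixmier or in Takesaki), and it is the genuine technical core of the statement; by contrast, the Hilbert-space disintegration of the first paragraph and the single application of Theorem \ref{Thm unique decomposition} are comparatively mechanical.
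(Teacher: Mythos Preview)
The paper does not give its own proof of this theorem: it is stated with a citation to \cite[Ch.~IV, Thm.~8.21]{takesaki1} and is used as a black box. So there is no paper-proof to compare against.

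Your outline is essentially the standard reduction-theory argument found in Takesaki (and Dixmier): disintegrate $H$ over $(X,\mu)$ so that $\iota(L^\infty(X,\mu))$ becomes the diagonal algebra, use the characterization of decomposable operators (Theorem~\ref{Thm unique decomposition}) to see that $M$ consists of decomposable operators, choose a countable generating family to define the fiber algebras $\mathscr M_x$, and then invoke the measurable commutation theorem $L^\infty(\mathscr M)'=L^\infty(\mathscr M')$ to identify $L^\infty(\mathscr M)$ with (the transport of) $M$. You correctly flag that the measurable selection of $x\mapsto \mathscr M_x$ and the fiberwise commutant identity are the technical heart; these are exactly the ingredients developed in \cite[Ch.~IV, \S8]{takesaki1}, so your sketch is on target and would, when fleshed out, reproduce the cited proof rather than diverge from it.
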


For a von Neumann algebra $M$, we denote by $M_*^+$ the space of normal positive linear functionals on $M$. 

\begin{lem}[{\cite[Ch. IV, Prop. 8.34]{takesaki1}}]\label{lem:normal states on measured fields}
	Let $X$ be a standard Borel space, $\mu$ a measure on $X$ and $\mathscr M$ a bundle of von Neumann algebras over $X$. Let $X\ni x\mapsto \omega_x\in (\mathscr M_x)_*^+$ be a map such that for each $m\in L^\infty(\mathscr M)$, the map $X\ni x\mapsto \omega_x(m(x))\in \C$ is $\mu$-integrable. Then the map 
		\[\omega\coloneqq \int_X\omega_xd\mu(x)\colon L^\infty(\mathscr M)\to \C,\quad m\mapsto \int_X\omega_x(m(x))d\mu(x)\]
	defines a normal positive linear functional on $L^\infty(\mathscr M)$. Every normal positive linear functional on $L^\infty(\mathscr M)$ is of this form. If $\omega=\int_X\omega_x'd\mu(x)$ is another presentation of $\omega$ as above, then we have $\omega_x'=\omega_x$ for $\mu$-almost every $x\in X$. 
\end{lem}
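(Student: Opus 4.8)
The plan is to establish the three assertions—that $\omega$ is a well-defined normal positive functional, that every normal positive functional arises this way, and that the field $x\mapsto\omega_x$ is essentially unique—separately, exploiting throughout that $L^2(X,\mathscr H,\mu)$ is separable and hence that $L^\infty(\mathscr M)$ is countably decomposable and coincides with the direct integral $\int_X^\oplus \mathscr M_x\,d\mu(x)$ of the fibers.

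For the first assertion, well-definedness follows from the uniqueness part of Theorem~\ref{Thm unique decomposition}: if $m=\int_X m_x\,d\mu(x)=\int_X m_x'\,d\mu(x)$ then $m_x=m_x'$ for $\mu$-a.e.\ $x$, so $\int_X \omega_x(m_x)\,d\mu=\int_X \omega_x(m_x')\,d\mu$. Linearity and positivity are then immediate from the corresponding properties of the $\omega_x$ and of the integral. For normality I would use that on a countably decomposable von Neumann algebra a positive functional is normal as soon as it is continuous along bounded increasing \emph{sequences}. Given $m^{(i)}\nearrow m$ in $L^\infty(\mathscr M)^+$, suprema in a direct integral are computed fiberwise, so $m^{(i)}_x\nearrow m_x$ for $\mu$-a.e.\ $x$; normality of each $\omega_x$ gives $\omega_x(m^{(i)}_x)\nearrow\omega_x(m_x)$ pointwise a.e., and the monotone convergence theorem yields $\omega(m^{(i)})\to\omega(m)$.

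For the existence of a decomposition I would start from the standard representation of a normal positive functional on a von Neumann algebra acting on a separable Hilbert space as a countable sum of vector states: there is a sequence $(\xi_n)$ in $L^2(X,\mathscr H,\mu)$ with $\sum_n\|\xi_n\|^2<\infty$ and $\omega(m)=\sum_n\langle m\xi_n,\xi_n\rangle$. Realizing each $\xi_n$ as a measurable section $x\mapsto\xi_n(x)\in\mathscr H_x$, the identity $\sum_n\int_X\|\xi_n(x)\|^2\,d\mu<\infty$ forces $\sum_n\|\xi_n(x)\|^2<\infty$ for $\mu$-a.e.\ $x$, so that $\omega_x(T)\coloneqq\sum_n\langle T\xi_n(x),\xi_n(x)\rangle$ is a well-defined normal positive functional on $\mathscr M_x$ for a.e.\ $x$. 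Since for decomposable $m$ one has $\langle m\xi_n,\xi_n\rangle=\int_X\langle m_x\xi_n(x),\xi_n(x)\rangle\,d\mu$, a Tonelli argument (applied first to $m\ge 0$ and extended by linearity) interchanges the sum and the integral to give $\omega(m)=\int_X\omega_x(m_x)\,d\mu$, the measurability of $x\mapsto\omega_x(m_x)$ being term-by-term.

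For uniqueness, suppose $\int_X(\omega_x-\omega_x')(m_x)\,d\mu=0$ for every $m\in L^\infty(\mathscr M)$. Fix a countable family $(m^{(k)})$ of decomposable operators whose fibers $(m^{(k)}_x)$ are $\sigma$-weakly dense in $\mathscr M_x$ for a.e.\ $x$, which exists by standard measurable-selection arguments for separable bundles of von Neumann algebras. Since $L^\infty(\mathscr M)$ contains the diagonal operators, $\chi_E\cdot m^{(k)}$ is decomposable for every Borel $E\subseteq X$, with fiber $\chi_E(x)m^{(k)}_x$; substituting it gives $\int_E(\omega_x-\omega_x')(m^{(k)}_x)\,d\mu=0$ for all $E$, hence $(\omega_x-\omega_x')(m^{(k)}_x)=0$ for a.e.\ $x$ and each fixed $k$. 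Intersecting the countably many conull sets and using $\sigma$-weak continuity of the normal functional $\omega_x-\omega_x'$ together with $\sigma$-weak density of $(m^{(k)}_x)$ yields $\omega_x=\omega_x'$ for $\mu$-a.e.\ $x$. The steps I expect to be most delicate are the fiberwise computation of suprema in the direct integral used for normality and, relatedly, the measurable selection producing a countable fiberwise $\sigma$-weakly dense family; both rest on the measurable-field structure underlying $L^\infty(\mathscr M)$ rather than on soft functional analysis.
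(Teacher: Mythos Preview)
The paper does not supply its own proof of this lemma: it is stated with a bare citation to \cite[Ch.~IV, Prop.~8.34]{takesaki1} and used as a black box thereafter. So there is no in-paper argument to compare against. Your proposal is a correct and essentially standard proof of the cited result, and indeed the strategy you outline---the vector-state expansion $\omega=\sum_n\langle\,\cdot\,\xi_n,\xi_n\rangle$ for existence, multiplication by diagonal characteristic functions for uniqueness, and fiberwise monotone convergence for normality---is precisely the route taken in Takesaki's proof. The two points you flag as delicate (fiberwise computation of suprema and the existence of a countable fiberwise $\sigma$-weakly dense family) are exactly the places where the measurable-field axioms are invoked, and both are established in the sections of \cite[Ch.~IV.8]{takesaki1} preceding Prop.~8.34.
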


\begin{defn}[cp. \cite{yamanouchiduality}]\label{defn:W*dyn}
	A $W^*$-dynamical system $(G,\mathscr M,\alpha,\mu)$ consists of a groupoid $G$, a bundle of von Neumann algebras $\mathscr M$ over $G^{(0)}$, a quasi-invariant measure $\mu$ on $G^{(0)}$ and a collection $(\alpha_g\colon \mathscr M_{s(g)}\xrightarrow{\cong}\mathscr M_{r(g)})_{g\in G}$ of $*$-isomorphisms satisfying $\alpha_{gh}=\alpha_g\circ \alpha_h$ for all $(g,h)\in G^{(2)}$ and such that for each $m\in L^\infty(\mathscr M)$ and each $\omega=\int_{G^{(0)}}\omega_xd\mu(x)\in L^\infty(\mathscr M)_*^+$, the map 
		\begin{equation}\label{eq:measurable action}
			G\ni g\mapsto \omega_{r(g)}(\alpha_g(m(s(g))))\in \C
		\end{equation}
	is $\mu\circ\lambda$-measurable. 
\end{defn}

Writing $M\coloneqq L^\infty(\mathscr M)$, we may also say that $(M,\alpha,\mu)$ \emph{is a $G$-von Neumann algebra} or that $\alpha\colon G\acton (M,\mu)$ \emph{is an action}. 

\begin{eg}\label{eg:enveloping G von Neumann algebra}
	Let $(G,\mathscr A,\alpha)$ be a $C^*$-dynamical system together with a covariant representation $(\mu,\mathscr H,\pi,U)$. Then there is a $W^*$-dynamical system $(G,\pi(\mathscr A)'',\alpha'',\mu)$ defined by $\pi(\mathscr A)''\coloneqq \bigsqcup_{x\in G^{(0)}}\pi(\mathscr A_x)''\subseteq \End(\mathscr H)$ and 
		\[\alpha''_g(T)\coloneqq U_gTU_g^*,\quad g\in G,T\in \pi(\mathscr A_{s(g)})''.\]
	We have $L^\infty(\pi(\mathscr A)'')=\pi(C_0(\mathscr A))''$. 
\end{eg}
\begin{proof}
	The equality $L^\infty(\pi(\mathscr A)'')=\pi(C_0(\mathscr A))''$ follows from \cite[Ch. IV, Thm. 8.18]{takesaki1}.
	We prove that the map \eqref{eq:measurable action} is measurable for all $\omega\in L^\infty(\pi(\mathscr A)'')_*^+$ and $m\in L^\infty(\pi(\mathscr A)'')=\pi(C_0(\mathscr A))''$.
	Pick a bounded sequence $(a_n)_{n\in \N}$ in $C_0(\mathscr A)$ such that $(\pi(a_n))_{n\in \N}$ converges ultraweakly to $m$. Since $\omega$ is normal, we can pick a sequence $(\xi_i)_{i\in \N}$ in $L^2(G^{(0)},\mathscr H,\mu)$ satisfying $\omega(x)=\sum_{i=0}^\infty \langle \xi_i,x \xi_i\rangle$ for all $x\in L^\infty(\pi(\mathscr A)'')$. We have 
			\[\omega_{r(g)}(\alpha''_g(m(s(g))))=\lim_{n,k\to \infty} \sum_{i=1}^k\left\langle \xi_i(r(g)),U_g\pi_{s(g)}(a_n(s(g)))U_g^*\xi_i(r(g))\right\rangle\]
		for $\mu\circ \lambda$-almost all $g\in G$. As a countable limit of measurable functions in $g\in G$, the above term is a measurable function in $g\in G$. 
\end{proof}

\begin{rem}[cp. \cite{yamanouchiduality,OtyRamsay2006GvNA}]\label{rem:standard form}
	Let $(G,\mathscr M,\alpha,\mu)$ be a $W^*$-dynamical system. Then using the Haagerup standard form \cite{Haagerup1975standard}, one can always find a canonical Hilbert bundle $\mathscr H$, a representation $\mathscr M\subseteq \End(\mathscr H)$ and a unitary representation $U\colon G\to \Iso(\mathscr H)$ satisfying $\alpha_g(m)=U_g m U_g^*$ for all $g\in G$ and $m\in \mathscr M_{s(g)}$. 
\end{rem}

Let $(G,\mathscr M,\alpha,\mu)$ be a $W^*$-dynamical system with $\alpha=\Ad(U)$ for a unitary representation $U\colon G\to \Iso(\mathscr H)$ as above and write $\nu=\mu\circ \lambda$. We denote by $S(r^*\mathscr M)$ the space of Borel sections $f\colon G\to r^*\mathscr M$ such that 
	\[\|f\|_I\coloneqq \max\left\lbrace \sup_{x\in G^{(0)}}\sum_{g\in G^x}\|f(g)\|,\sup_{x\in G^{(0)}}\sum_{g\in G_x}\|f(g)\|\right\rbrace<\infty.\]
	Then $S(r^*\mathscr M)$ is a $*$-algebra with respect to convolution and involution defined by
		\[f_1*f_2(g)\coloneqq \sum_{h\in G^{r(g)}}f_1(h)\alpha_h(f_2(h^{-1}g)),\quad f_1^*(g)\coloneqq \alpha_{g}(f_1(g^{-1})^*),\]
	for $f_1,f_2\in S(r^*\mathscr M)$ and $g\in G$. There is a representation
		\[\varphi_\alpha\colon S(r^*\mathscr M)\to \mathcal L(L^2(G,r^*\mathscr H,\nu^{-1}))\]
	given by 
		\[\varphi_\alpha(f)\xi(g)=\sum_{h\in G^{r(g)}}f(h)U_h(\xi(h^{-1}g)),\quad f\in S(r^*\mathscr M), \xi\in L^2(G,r^*\mathscr H,\nu^{-1}) ,g\in G.\]
		
\begin{defn}[cp. \cite{yamanouchiduality,OtyRamsay2006GvNA}]
	The crossed product $L^\infty(\mathscr M)\overline{\rtimes} G$ is the weak closure of $\varphi_\alpha(S(r^*\mathscr M))\subseteq \mathcal L(L^2(G,r^*\mathscr H,\nu^{-1}))$. 
\end{defn}

There is a canonical inclusion $L^\infty(\mathscr M)\hookrightarrow L^\infty(\mathscr M)\overline{\rtimes}G$ given by $m\mapsto \varphi_\alpha(\tilde m)$ where $\tilde m\in S(r^*\mathscr M)$ is given by 
	\[\tilde m(g)\coloneqq \begin{cases} m(g),&g\in G^{(0)}\\ 0, &g\notin G^{(0)}\end{cases}.\]

\begin{lem}
	Let $(G,\mathscr M,\alpha,\mu)$ be a $W^*$-dynamical system. The restriction map 
		\[\varphi_\alpha(S(r^*\mathscr M))\to L^\infty(\mathscr M),\quad \varphi_\alpha(f)\mapsto f|_{G^{(0)}}\]
	extends to a conditional expectation $E\colon L^\infty(\mathscr M)\overline{\rtimes}G\to L^\infty(\mathscr M)$. 
\end{lem}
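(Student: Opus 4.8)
The plan is to implement $E$ as a spatial compression on the representing Hilbert space $H \coloneqq L^2(G, r^*\mathscr H, \nu^{-1})$, exploiting that \'etaleness makes the unit space accessible. Since $G$ is \'etale and Hausdorff, $G^{(0)}$ is a clopen subset of $G$, so multiplication by the indicator $\mathbf 1_{G^{(0)}}$ defines an orthogonal projection $P \in \mathcal L(H)$. I would first identify its range: because $\nu^{-1}$ disintegrates over $s$ and restricts to $\mu$ on $G^{(0)}$, the map $\xi \mapsto \xi|_{G^{(0)}}$ gives a unitary $PH \xrightarrow{\cong} L^2(G^{(0)}, \mathscr H, \mu) = L^2(\mathscr H)$, under which $L^\infty(\mathscr M)$ acts by its standard multiplication representation. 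I then define $E(T) \coloneqq PTP$, viewed as an operator on $PH \cong L^2(\mathscr H)$.

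The central computation is on the dense $*$-subalgebra $\varphi_\alpha(S(r^*\mathscr M))$. For $f \in S(r^*\mathscr M)$ and $\xi \in PH$ (i.e.\ $\xi$ supported on $G^{(0)}$), evaluating $\varphi_\alpha(f)\xi$ at a unit $x$, the only $h \in G^{x}$ with $\xi(h^{-1}x) \neq 0$ is $h = x$; since $U_x = \mathrm{id}$ for units, this collapses to $\varphi_\alpha(f)\xi(x) = f(x)\xi(x)$. Hence $E(\varphi_\alpha(f)) = P\varphi_\alpha(f)P$ is exactly multiplication by the restriction $f|_{G^{(0)}}$, which is a bounded Borel section of $\mathscr M$ (boundedness coming from $\|f\|_I < \infty$) and so defines an element of $L^\infty(\mathscr M)$. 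This simultaneously shows that the spatial map $E$ restricts on the dense subalgebra to the prescribed assignment $\varphi_\alpha(f) \mapsto f|_{G^{(0)}}$ — so the latter is automatically well defined — and that $E$ is the identity on the copy of $L^\infty(\mathscr M)$, since $m \mapsto \varphi_\alpha(\tilde m)$ yields precisely multiplication by $m$ on $PH$.

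It remains to upgrade this to a genuine conditional expectation $E \colon L^\infty(\mathscr M)\overline{\rtimes}G \to L^\infty(\mathscr M)$. The projection $P$ commutes with the image of $L^\infty(\mathscr M)$ in $L^\infty(\mathscr M)\overline{\rtimes}G$, as both consist of diagonal multiplication operators in the variable $g$; from $PaP = aP$ for such $a$ I obtain the bimodule identity $E(aTb) = aE(T)b$ for $a,b \in L^\infty(\mathscr M)$. Compression by a fixed projection is normal, completely positive, contractive and unital, so $E$ inherits all these properties. The one point requiring a density argument is that $E$ lands in $L^\infty(\mathscr M)$: given $T$ in the crossed product, Kaplansky density provides a bounded net $\varphi_\alpha(f_i) \to T$ in the $\sigma$-weak topology, whence $E(\varphi_\alpha(f_i)) = \varphi_\alpha(f_i|_{G^{(0)}}) \in L^\infty(\mathscr M)$ converges $\sigma$-weakly to $E(T)$; since $L^\infty(\mathscr M)$ is $\sigma$-weakly closed, $E(T) \in L^\infty(\mathscr M)$. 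Together with $E|_{L^\infty(\mathscr M)} = \mathrm{id}$ this forces idempotency, so $E$ is the desired conditional expectation (one could alternatively invoke Tomiyama's theorem once contractivity and idempotency onto $L^\infty(\mathscr M)$ are established).

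I expect the main obstacle to be the bookkeeping around the two measure-theoretic identifications — that $\nu^{-1}$ restricts to $\mu$ on the clopen set $G^{(0)}$, and that under $PH \cong L^2(\mathscr H)$ the compressed operators really correspond to the standard action of $L^\infty(\mathscr M)$ — since these are precisely the places where \'etaleness (and the resulting counting measures and discrete fibers) is indispensable; for a non-\'etale $G$ the set $G^{(0)}$ would be $\nu^{-1}$-null and this construction of $P$ would collapse. The remaining verifications are the routine, standard properties of compression by a projection.
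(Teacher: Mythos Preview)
Your proof is correct and follows essentially the same approach as the paper: the paper compresses by the isometry $V\colon L^2(\mathscr H)\to L^2(G,r^*\mathscr H,\nu^{-1})$ extending sections by zero off $G^{(0)}$, and your projection $P$ is precisely $VV^*$, so $T\mapsto PTP$ and $T\mapsto V^*TV$ are the same map under the identification $PH\cong L^2(\mathscr H)$. Your write-up is in fact slightly more explicit than the paper's about why the range lies in $L^\infty(\mathscr M)$ (via Kaplansky density) and about the measure-theoretic identification $\nu^{-1}|_{G^{(0)}}=\mu$, both of which the paper leaves implicit.
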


\begin{proof}
	Although the lemma follows from a more general construction for $W^*$-Fell bundles over inverse semigroups in \cite{bussexelmeyer2017reduced}, we give a self-contained proof for the special case treated here. 
	Denote by $V\colon L^2(G^{(0)},\mathscr H,\mu)\to L^2(G,r^*\mathscr H,\nu^{-1})$ the isometry given by 
		\[(V\xi)(g)=\begin{cases}	\xi(g),&g\in G^{(0)}\\ 0,&g\notin G^{(0)}	\end{cases} \]
	for $\xi\in L^2(G^{(0)},\mathscr H,\mu)$ and $g\in G$. Then the map 
		\[\Ad(V^*)\colon \mathcal L(L^2(G,r^*\mathscr H,\nu^{-1}))\to\mathcal L(L^2(G^{(0)},\mathscr H,\mu)),\quad T\mapsto V^*TV\]
	is normal and completely positive. We denote its restriction to $\varphi_\alpha(S(r^*\mathscr M))$ by $E_0$. One easily checks that $E_0$ is given by $E_0(\varphi_\alpha(f))=f|_{G^{(0)}}$ for $f\in S(r^*\mathscr M))$ and that $E_0$ is $L^\infty(\mathscr M)$-bilinear. By normality, $E_0$ extends to a conditional expectation $E\colon L^\infty(\mathscr M)\overline{\rtimes }G\to L^\infty(\mathscr M)$. 
\end{proof}

\section{An approximation property for Fell bundles}
Throughout this section, let $G$ be a Hausdorff, second countable, \'etale groupoid.
For a Fell bundle $\mathscr B$ over $G$, we denote by $L^2(G,r^*\mathscr B^{(0)})$ the Hilbert-$C_0(\mathscr B^{(0)})$-module obtained by completing $C_c(G,r^*\mathscr B^{(0)})$ with respect to the $C_0(\mathscr B^{(0)})$- valued inner product 
	\[\langle \xi,\eta\rangle(x)\coloneqq \sum_{g\in G^x}\xi(g)^*\eta(g),\quad \xi,\eta\in L^2(G,r^*\mathscr B^{(0)}),~x\in \GG,\]
and equip it with the $C_0(\mathscr B^{(0)})$-module structure given by 
	\[\xi\cdot f(g)\coloneqq \xi(g)f(r(g)),\quad \xi\in L^2(G,r^*\mathscr B^{(0)}),f\in C_0(\mathscr B^{(0)}),g\in G.\]
 The following definition is a generalization of the approximation property of Exel \cite{Exel-Crelle} (see also \cite{exel2002approximation}).
\begin{defn}\label{defn:AP}
	A Fell bundle $\mathscr B$ over $G$ \emph{has the approximation property} if there exists a sequence of functions $(a_i)_{i\in \N}$ in $C_c(G,r^*\mathscr B^{(0)})\subseteq L^2(G,r^*\mathscr B^{(0)})$ such that 
	\begin{enumerate}
		\item $\sup_{i\in \N}\|\langle a_i,a_i\rangle \|<\infty$. \label{item:uniformly bounded}
		\item For every $f\in C_c(\mathscr B)$, the sequence $(f_i)_{i\in \N}$ given by
			\[f_i\colon G\to \mathscr B,\quad g\mapsto \sum_{h\in G^{r(g)}}a_i(h)^*f(g)a_i(g^{-1}h)\]
			converges to $f$ in the inductive limit topology. \label{item:almostinvariant}
	\end{enumerate}
\end{defn}

\begin{rem}
\begin{enumerate}
	\item The trivial Fell bundle $\mathscr B\coloneqq G\times \C$ has the approximation property if and only if $G$ is topologically amenable in the sense of \cite{ADRenault2000amenableGroupoids}. 
	\item If $G$ is topologically amenable, every Fell bundle $\mathscr B$ over $G$ has the approximation property. Indeed, if $(a_i)_{i\in \N}$ is a sequence in $C_c(G)$ witnessing the approximation property for $G\times \C$ and $(u_i)_{i\in \N}$ is an approximate unit for $C_0(\mathscr B^{(0)})$, then the sequence $(a_i\cdot (u_i\circ r))_{i\in \N}$ in $C_c(G,r^*\mathscr B^{(0)})$ witnesses the approximation property for $\mathscr B$.
\end{enumerate}
\end{rem}

\begin{thm}\label{thm:APimpliesWC}
	Let $\mathscr B$ be a Fell bundle over $G$ with the approximation property. Then we have $C^*(\mathscr B)\cong C^*_r(\mathscr B)$ via the regular representation.
\end{thm}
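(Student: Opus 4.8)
The plan is to prove the nontrivial inequality $\|f\|_{C^*(\mathscr B)}\le\|\Lambda(f)\|$ for all $f\in C_c(\mathscr B)$. Since $\Lambda$ always induces a surjection $C^*(\mathscr B)\to C^*_r(\mathscr B)$ and the reverse inequality $\|\Lambda(f)\|\le\|f\|_{C^*(\mathscr B)}$ is automatic, this identity yields the isomorphism via the regular representation. A useful first reduction: it suffices to produce a single constant $C<\infty$ with $\|f\|_{C^*(\mathscr B)}\le C\|\Lambda(f)\|$ for all $f$. Indeed, $\|\Lambda(\cdot)\|$ is a $C^*$-seminorm, so applying the bound to $f^*f$ and using the $C^*$-identity gives $\|f\|_{C^*(\mathscr B)}\le C^{1/2}\|\Lambda(f)\|$, and iterating yields $\|f\|_{C^*(\mathscr B)}\le C^{1/2^n}\|\Lambda(f)\|\to\|\Lambda(f)\|$.

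To obtain the constant bound I would first disintegrate. By Theorem \ref{thm:Fell-disintegration} we have $\|f\|_{C^*(\mathscr B)}=\sup\|L_\pi(f)\|$, the supremum ranging over integrated forms $L_\pi$ of strict representations $(\mu,\mathscr H,\pi)$, so it is enough to bound each $\|L_\pi(f)\|$ by $C\|\Lambda(f)\|$ with $C\coloneqq\sup_i\|\langle a_i,a_i\rangle\|$, which is finite by Definition \ref{defn:AP}\eqref{item:uniformly bounded}. Fix such a $\pi$ with modular function $\Delta$. The key construction is a sequence of intertwiners built from the functions $a_i$. On the Hilbert space $L^2(G,r^*\mathscr H,(\mu\circ\lambda)^{-1})$, which is canonically the interior tensor product $L^2(\mathscr B)\otimes_\pi L^2(\mathscr H)$ and on which $\Lambda(f)\otimes 1$ acts by $v\mapsto\bigl(g\mapsto\sum_{h\in G^{r(g)}}\pi_h(f(h))v(h^{-1}g)\bigr)$, I would define
\[T_i\colon L^2(\mathscr H)\to L^2(G,r^*\mathscr H,(\mu\circ\lambda)^{-1}),\qquad (T_i\xi)(g)\coloneqq\pi_{r(g)}(a_i(g))\,\xi(r(g))\,\Delta^{1/2}(g).\]
The computation $\sum_{g\in G^x}\langle\xi(x),\pi_x(a_i(g)^*a_i(g))\xi(x)\rangle=\langle\xi(x),\pi_x(\langle a_i,a_i\rangle(x))\xi(x)\rangle$ shows $\|T_i\|^2\le\|\langle a_i,a_i\rangle\|\le C$, so the $T_i$ are uniformly bounded.

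Next I would verify the compression identity $T_i^*(\Lambda(f)\otimes 1)T_i=L_\pi(f_i)$, where $f_i$ is the averaged function from Definition \ref{defn:AP}\eqref{item:almostinvariant}. Expanding the left-hand side and repeatedly applying the $*$-functor identities, in particular $\pi_{r(g)}(a_i(h))^*\pi_g(f(g))\pi_{s(g)}(a_i(g^{-1}h))=\pi_g\bigl(a_i(h)^*f(g)a_i(g^{-1}h)\bigr)$, reproduces exactly the summand defining $L_\pi(f_i)$. Consequently $\|L_\pi(f_i)\|=\|T_i^*(\Lambda(f)\otimes1)T_i\|\le\|T_i\|^2\|\Lambda(f)\otimes1\|\le C\|\Lambda(f)\|$ for every $i$. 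Finally I would pass to the limit: by Definition \ref{defn:AP}\eqref{item:almostinvariant} we have $f_i\to f$ in the inductive limit topology, and since the supports remain in a fixed compact set $K$ and $G$ is étale, each fiber meets $K$ in a uniformly bounded number of points, so $\|f_i-f\|_I\le N_K\|f_i-f\|_\infty\to0$. As $L_\pi$ is $I$-norm-decreasing, $L_\pi(f_i)\to L_\pi(f)$ in norm, giving $\|L_\pi(f)\|\le C\|\Lambda(f)\|$. Taking the supremum over $\pi$ and invoking the reduction completes the proof.

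The main obstacle is the construction in the third step: correctly identifying $L^2(\mathscr B)\otimes_\pi L^2(\mathscr H)$ with $L^2(G,r^*\mathscr H,(\mu\circ\lambda)^{-1})$ together with the explicit action of $\Lambda(f)\otimes 1$, and — most delicately — inserting the correct powers of the modular function $\Delta$ into $T_i$ and into the change between the source- and range-fibered measures, so that the compression is \emph{exactly} $L_\pi(f_i)$ rather than merely comparable to it. Checking that $T_i$ is a well-defined bounded adjointable operator, using that $a_i\in C_c(G,r^*\mathscr B^{(0)})$ and that $\pi$ is Borel, is the remaining technical point.
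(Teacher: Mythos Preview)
Your proposal is correct and follows essentially the same route as the paper: both reduce via disintegration, build the operators $T_i$, and verify the compression identity $T_i^*M_\pi(f)T_i=L_\pi(f_i)$ using Lemma~\ref{lem:williams}. The only difference is cosmetic: rather than your norm bound plus square-root trick, the paper observes directly that $V_i(f)\coloneqq T_i^*M_\pi(f)T_i\to L_\pi(f)$ for every $f\in C^*(\mathscr B)$ (by uniform boundedness and an $\varepsilon/3$ argument) and each $V_i$ factors through $C^*_r(\mathscr B)$, so $L_\pi$ does too.
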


For the proof of Theorem \ref{thm:APimpliesWC}, we need 

\begin{lem}[{\cite[Lem.~2]{williamsamenability}}]\label{lem:williams}
	Let $(\mu,\mathscr H,\pi)$ be a strict representation of a Fell bundle $\mathscr B$ over $G$ and write $\nu\coloneqq \mu\circ \lambda$. Then the representation 
		\[M_{\pi}:C_c(\mathscr B)\to \mathcal L(L^2(G,r^*\mathscr H,\nu^{-1}))\]
	given by 
		\[M_\pi(f)\xi(g)\coloneqq \sum_{h\in G^{r(g)}}\pi(f(h))\xi(h^{-1}g),\quad f\in C_c(\mathscr B), \xi\in L^2(G,r^*\mathscr H,\nu^{-1}), g\in G\]
	is equivalent to the representation
	\[\Ind \pi\colon C_c(\mathscr B)\to \mathcal L(L^2(\mathscr B)\otimes_\pi L^2(\mathscr H))\]
	given by 
		\[\quad \Ind \pi(f)(\xi\otimes \eta)\coloneqq \Lambda(f)\xi\otimes \eta,\quad f\in C_c(\mathscr B),\xi \in L^2(\mathscr B), \eta\in L^2(\mathscr H).\]
	In particular, $M_\pi$ extends to $C^*_r(\mathscr B)$. 
\end{lem}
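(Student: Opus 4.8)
The plan is to construct an explicit intertwining unitary
\[
W\colon L^2(\mathscr B)\otimes_\pi L^2(\mathscr H)\to L^2(G,r^*\mathscr H,\nu^{-1})
\]
and check directly that it conjugates $\Ind\pi$ into $M_\pi$. On elementary tensors with $\xi\in C_c(\mathscr B)$ and $\eta\in L^2(\mathscr H)$ I would set
\[
W(\xi\otimes\eta)(g)\coloneqq \pi_g(\xi(g))\,\eta(s(g)),\qquad g\in G,
\]
which lies in $\mathscr H_{r(g)}$ as required, where $(\pi_x)_{x\in\GG}$ is the Borel family underlying $\pi$ from Proposition \ref{direct integral representation}. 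First I would verify that $W$ is well defined: measurability of $g\mapsto \pi_g(\xi(g))\eta(s(g))$ follows from the Borel property of the $*$-functor together with the measurability of $\eta$, and the balancing relation $W(\xi\cdot f\otimes\eta)=W(\xi\otimes\pi(f)\eta)$ for $f\in C_0(\mathscr B^{(0)})$ follows from the identity $\pi_g(\xi(g)f(s(g)))=\pi_g(\xi(g))\pi_{s(g)}(f(s(g)))$, which is the multiplicativity axiom of a $*$-functor.

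The isometry property is the crucial computation, and it is where the choice of the measure $\nu^{-1}$ (integration over the source fibers $G_x=s^{-1}(x)$) pays off. Expanding the $C_0(\mathscr B^{(0)})$-valued inner product $\langle\xi,\xi'\rangle(x)=\sum_{g\in G_x}\xi(g)^*\xi'(g)$ and using the $*$-functor identities to rewrite $\pi_x(\xi(g)^*\xi'(g))=\pi_g(\xi(g))^*\pi_g(\xi'(g))$, one finds
\[
\langle\xi\otimes\eta,\xi'\otimes\eta'\rangle=\int_{\GG}\sum_{g\in G_x}\big\langle\pi_g(\xi(g))\eta(x),\pi_g(\xi'(g))\eta'(x)\big\rangle\,d\mu(x),
\]
which is exactly $\langle W(\xi\otimes\eta),W(\xi'\otimes\eta')\rangle$ computed in $L^2(G,r^*\mathscr H,\nu^{-1})$, since $\nu^{-1}$ integrates over the fibers $G_x$. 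Note that no modular function enters, precisely because we integrate against $\nu^{-1}$ rather than $\nu$. Hence $W$ descends to a well-defined isometry on the balanced tensor product.

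To upgrade $W$ to a unitary I would show its range is dense. Fiberwise this reduces to $\overline{\pi_g(\mathscr B_g)\mathscr H_{s(g)}}=\mathscr H_{r(g)}$, which follows from saturation: by Remark \ref{rem:imprimitivity bimodule} we have $\overline{\operatorname{span}\mathscr B_g\mathscr B_g^*}=\mathscr B_{r(g)}$, and since $\pi_g(a)\pi_{g^{-1}}(b^*)=\pi_{r(g)}(ab^*)$ this gives $\overline{\pi_g(\mathscr B_g)\pi_{g^{-1}}(\mathscr B_g^*)\mathscr H_{r(g)}}=\overline{\pi_{r(g)}(\mathscr B_{r(g)})\mathscr H_{r(g)}}=\mathscr H_{r(g)}$ by non-degeneracy of $\pi_{r(g)}$. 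Combined with a measurable-selection and $L^2$-approximation argument this yields density of the range. The intertwining identity $W\circ\Ind\pi(f)=M_\pi(f)\circ W$ is then a direct calculation: writing $\Lambda(f)\xi(g)=\sum_{h\in G^{r(g)}}f(h)\xi(h^{-1}g)$ and applying multiplicativity $\pi_g(f(h)\xi(h^{-1}g))=\pi_h(f(h))\pi_{h^{-1}g}(\xi(h^{-1}g))$ together with $s(h^{-1}g)=s(g)$ turns $W(\Lambda(f)\xi\otimes\eta)(g)$ into $\sum_{h\in G^{r(g)}}\pi_h(f(h))\,W(\xi\otimes\eta)(h^{-1}g)$, which is $M_\pi(f)W(\xi\otimes\eta)(g)$. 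Finally, since $\Ind\pi(f)=\Lambda(f)\otimes_\pi\id$ satisfies $\|\Ind\pi(f)\|\leq\|\Lambda(f)\|$, the representation $M_\pi\cong\Ind\pi$ factors through $C^*_r(\mathscr B)$.

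I expect the main obstacle to be the technical bookkeeping around measurability and the density of the range: one must ensure that $W(\xi\otimes\eta)$ genuinely defines a Borel section of $r^*\mathscr H$ compatible with the direct-integral structure, and that the fiberwise density above can be upgraded to $L^2$-density over the (non-discrete) base $\GG$, which requires a measurable-selection argument using the fundamental sequence of $\mathscr H$. The algebraic identities behind the isometry and the intertwining are routine once the $*$-functor axioms are invoked.
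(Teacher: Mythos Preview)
The paper does not give its own proof of this lemma; it simply cites \cite[Lem.~2]{williamsamenability} as the source. Your approach---building the explicit unitary $W(\xi\otimes\eta)(g)=\pi_g(\xi(g))\eta(s(g))$, checking isometry via the $*$-functor identity $\pi_{s(g)}(\xi(g)^*\xi'(g))=\pi_g(\xi(g))^*\pi_g(\xi'(g))$, and then verifying the intertwining relation by multiplicativity of $\pi$---is exactly the standard argument behind that reference, and your computations are correct.

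Two small remarks. First, you invoke Proposition~\ref{direct integral representation} to obtain the family $(\pi_x)$, but here $\pi$ is already given as a Borel $*$-functor $\mathscr B\to\Hom(\mathscr H)$ (Definition~\ref{defn:Borel*functor}), so the maps $\pi_g$ are part of the data and no disintegration is needed. Second, the density-of-range step is the only place requiring real care: non-degeneracy of $\pi_x$ is only asserted for $\mu$-almost every $x$, so the fiberwise density $\overline{\pi_g(\mathscr B_g)\mathscr H_{s(g)}}=\mathscr H_{r(g)}$ holds only $\nu^{-1}$-almost everywhere. That is enough for $L^2$-density, but you should phrase it that way rather than as a pointwise statement. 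The measurable-selection issue you flag is genuine but routine once one works with the fundamental sequence of $\mathscr H$ and approximates in $L^2$ on compact bisections.
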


\begin{proof}[Proof of Theorem \ref{thm:APimpliesWC}]
	By Theorem \ref{thm:Fell-disintegration} it suffices to prove that the integrated form $L_\pi$ of every strict representation $(\mu, \mathscr H,\pi)$ factors through $C^*_r(\mathscr B)$. Let $(a_i)_{i\in \N}$ be a sequence in $C_c(G,r^*\mathscr B^{(0)})$ as in Definition \ref{defn:AP}. Write $\nu\coloneqq \mu\circ\lambda$ and $\Delta\coloneqq \frac{d\nu}{d\nu^{-1}}$. For each $i\in \N$, define an operator $T_i\colon L^2(G^{(0)},\mathscr H,\mu)\to L^2(G,r^*\mathscr H,\nu^{-1})$ by 
	\[T_i\xi(g)\coloneqq \Delta^{\frac 1 2}(g)\pi(a_i(g))\xi(r(g)),\quad \xi\in L^2(\GG,\mathscr H,\mu),~g\in G.\]
	The adjoint of $T_i$ is given by
		\[T_i^*\eta(x)=\sum_{g\in G^x}\pi(a_i(g)^*)\eta(g)\Delta^{-\frac 1 2}(g),\quad \eta\in L^2(G,r^*\mathscr H,\nu^{-1}),~ x\in G^{(0)}.\]
	Note that
		\[V_i\colon C_c(\mathscr B)\to \mathcal L(L^2(G^{(0)},\mathscr H,\mu)),\quad V_i(f)\coloneqq T_i^*M_\pi(f)T_i\]
	extends to $C^*_r(\mathscr B)$ by Lemma \ref{lem:williams}. 
	We claim that for each $f\in C_c(\mathscr B)$ we have $V_i(f)\xrightarrow{i\to\infty} L_\pi(f)$ in norm. Fix $\xi\in L^2(G^{(0)},\mathscr H,\mu)$. For almost every $x\in G^{(0)}$, we have 
	\begin{align*}
		V_i(f)\xi(x)= &T_i^*M_\pi(f)T_i\xi(x)\\
		=&\sum_{g\in G^x}\pi(a_i(g))^*\Big((M_\pi(f)T_i\xi)(g)\Big)\Delta^{-\frac 1 2}(g)\\
		=&\sum_{g\in G^x}\pi(a_i(g))^*\left(\sum_{h\in G^x}\pi(f(h))(T_i\xi)(h^{-1}g)\right)\Delta^{-\frac 1 2}(g)\\
		=&\sum_{g\in G^x}\pi(a_i(g))^*\left(\sum_{h\in G^x}\pi(f(h))\pi(a_i(h^{-1}g))\xi(s(h))\Delta^{\frac 1 2}(h^{-1}g)\right)\Delta^{-\frac 1 2}(g)\\
		=&\sum_{h\in G^x}\pi\underbrace{\left(\sum_{g\in G^x}a_i(g)^*f(h)a_i(h^{-1}g)\right)}_{=f_i(h)}\xi(s(h))\Delta^{-\frac 1 2}(h)\\
		=&L_\pi(f_i)\xi(x).
	\end{align*}
	 Here, we have used Lemma \ref{lem:hom-almost-everywhere} at the second to last equality. By the second condition of Definition \ref{defn:AP}, we have $f_i\to f$ in the inductive limit topology, thus we have $V_i(f)\to L_\pi(f)$ in norm as claimed. 
	 It follows from the first condition of Definition \ref{defn:AP} that the $(V_i)_{i\in \N}$ are uniformly bounded. Thus by a $\frac \varepsilon 3$-argument we get $V_i(f)\to L_\pi(f)$ in norm for every $f\in C^*(\mathscr B)$. This proves that $M_\pi$ factors through $C^*_r(\mathscr B)$. 
\end{proof}

\begin{thm}\label{thm:APimpliesnuclearity}
	Let $\mathscr B$ be a Fell bundle over $G$ with the approximation property. Assume that $C_0(\mathscr B^{(0)})$ is nuclear. Then $C^*_r(\mathscr B)$ is nuclear as well. 
\end{thm}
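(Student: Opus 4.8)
The plan is to establish the completely positive approximation property for $C^*_r(\mathscr B)$ by exhibiting its identity as a uniformly bounded point-norm limit of completely positive maps, each of which factors through a nuclear $C^*$-algebra. Write $B_0\coloneqq C_0(\mathscr B^{(0)})$ and realize $C^*_r(\mathscr B)$ faithfully inside $\mathcal L_{B_0}(\mathcal E)$ via the regular representation $\Lambda$, where $\mathcal E\coloneqq L^2(\mathscr B)$ is the regular Hilbert $B_0$-module. The whole argument runs parallel to the proof of Theorem~\ref{thm:APimpliesWC}, but the representation-theoretic input there is replaced by a module-level factorization through compact operators, and this is where the nuclearity hypothesis on $B_0$ enters.

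First I would record the auxiliary nuclearity lemma: for any Hilbert $B_0$-module $\mathcal F$, the algebra $\mathcal K_{B_0}(\mathcal F)$ of compact operators is nuclear. Indeed, $\mathcal K_{B_0}(\mathcal F)$ is strongly Morita equivalent to the closed ideal $\overline{\langle \mathcal F,\mathcal F\rangle}\trianglelefteq B_0$; since ideals of nuclear $C^*$-algebras are nuclear and nuclearity is preserved under Morita equivalence, nuclearity of $B_0$ forces $\mathcal K_{B_0}(\mathcal F)$ to be nuclear. These are the nuclear algebras through which the approximating maps will be routed; in particular $\mathcal K_{B_0}\big(L^2(G,r^*\mathscr B^{(0)})\big)$ is nuclear.

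Next, for each $i$ I would use $a_i\in C_c(G,r^*\mathscr B^{(0)})$ together with the regular representation to build a pair of completely positive maps
\[
	\Phi_i\colon C^*_r(\mathscr B)\to \mathcal K_{B_0}(\mathcal F),\qquad \Psi_i\colon \mathcal K_{B_0}(\mathcal F)\to C^*_r(\mathscr B),\qquad \mathcal F\coloneqq L^2(G,r^*\mathscr B^{(0)}),
\]
whose composition is the \emph{conditioning map} $\mu_i\colon \Lambda(f)\mapsto \Lambda(f_i)$, with $f_i$ as in Definition~\ref{defn:AP}. Morally, $\Phi_i$ compresses an operator to its ``$a_i$-matrix coefficients'' while $\Psi_i$ reassembles a compact operator into an element of $\overline{\Lambda(C_c(\mathscr B))}=C^*_r(\mathscr B)$; the compact support of $a_i$ is exactly what confines $\Phi_i$ to the \emph{compact} operators $\mathcal K_{B_0}(\mathcal F)$. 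Complete positivity of both maps is built into the construction, and condition~\eqref{item:uniformly bounded} bounds $\|\Phi_i\|\,\|\Psi_i\|$ by $\sup_i\|\langle a_i,a_i\rangle\|<\infty$ uniformly in $i$. The identity $\Psi_i\circ\Phi_i=\mu_i$ is the module-level counterpart of the computation $T_i^*M_\pi(f)T_i=L_\pi(f_i)$ from the proof of Theorem~\ref{thm:APimpliesWC}, now with the modular function $\Delta$ absent since the counting measure on the \'etale groupoid $G$ is used.

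Finally, I would assemble the pieces: each $\mu_i$ factors through the nuclear algebra $\mathcal K_{B_0}(\mathcal F)$ and is therefore a nuclear map. Condition~\eqref{item:almostinvariant} gives $f_i\to f$ in the inductive limit topology for every $f\in C_c(\mathscr B)$, whence $\mu_i(\Lambda(f))=\Lambda(f_i)\to\Lambda(f)$ in norm (controlling $\|\Lambda(f_i-f)\|$ by the $I$-norm); combined with the uniform bound on $\|\mu_i\|$ and the density of $\Lambda(C_c(\mathscr B))$, an $\tfrac{\varepsilon}{3}$-argument promotes this to $\mu_i\to\id$ in the point-norm topology on $C^*_r(\mathscr B)$. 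Since the class of nuclear maps is closed under uniformly bounded point-norm limits, $\id_{C^*_r(\mathscr B)}$ is nuclear, i.e.\ $C^*_r(\mathscr B)$ is nuclear. The crux is the construction in the previous paragraph: one must route $\mu_i$ through the \emph{compact} operators $\mathcal K_{B_0}(\mathcal F)$ rather than through a full operator algebra such as $\mathcal L_{B_0}\big(L^2(\mathscr B)\otimes_{B_0}L^2(G,r^*\mathscr B^{(0)})\big)$, which is what the weak-containment argument of Theorem~\ref{thm:APimpliesWC} produces and which is \emph{not} nuclear. Confining the composition to the compacts relies on the compact support and the saturation of the data $a_i$, and the delicate points are verifying complete positivity of $\Phi_i,\Psi_i$ and tracking the two distinct Hilbert modules $L^2(\mathscr B)$ and $L^2(G,r^*\mathscr B^{(0)})$ with their left and right module structures.
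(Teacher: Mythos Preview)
Your strategy is genuinely different from the paper's. The paper does not attempt a direct completely-positive-approximation argument; instead it uses the tensor-product characterization of nuclearity. Concretely, it constructs Fell-bundle tensor products $\mathscr B\otimes A$ and $\mathscr B\otimes_{\max}A$, proves the compatibilities $C^*_r(\mathscr B)\otimes A\cong C^*_r(\mathscr B\otimes A)$ and $C^*(\mathscr B)\otimes_{\max}A\cong C^*(\mathscr B\otimes_{\max}A)$ (the latter via a disintegration argument), shows that the approximation property passes to tensor products, and then applies Theorem~\ref{thm:APimpliesWC} to $\mathscr B\otimes_{\max}A=\mathscr B\otimes A$ to conclude $C^*(\mathscr B)\otimes_{\max}A=C^*(\mathscr B)\otimes A$ for every unital separable $A$. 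Your route, modelled on the Exel--Ng style arguments for Fell bundles over discrete groups, would be shorter and more conceptual if it worked.

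There is, however, a genuine gap exactly where you label it ``the crux'': you never construct $\Phi_i$ and $\Psi_i$, and for \'etale groupoids this is not a routine adaptation of the discrete-group case. Two concrete obstructions. First, $C^*_r(\mathscr B)$ has no evident representation on $\mathcal F=L^2(G,r^*\mathscr B^{(0)})$: for $f\in C_c(\mathscr B)$ and $\xi\in\mathcal F$ the convolution $\sum_{h\in G^{r(g)}}f(h)\xi(h^{-1}g)$ lands in $\mathscr B_h$, not in $\mathscr B_{r(g)}$, so your ``compression to $a_i$-matrix coefficients'' has no obvious target in $\mathcal L_{B_0}(\mathcal F)$, let alone in $\mathcal K_{B_0}(\mathcal F)$. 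Second, even if some $\Phi_i$ can be defined, compact support of $a_i$ in $G$ does not automatically yield compactness of $\Phi_i(x)$ over $B_0$; in the group case finite support literally produces a finite matrix algebra $M_{|F_i|}(B_e)$, but a compact subset of an \'etale groupoid has no such finite combinatorial shadow. The map $\Psi_i$ is equally unspecified: you need an explicit formula sending $\mathcal K_{B_0}(\mathcal F)$ into $C^*_r(\mathscr B)$ (not merely into $\mathcal L_{B_0}(L^2(\mathscr B))$) together with a complete-positivity verification, and no candidate is offered. Until these maps are written down and checked, the argument is a heuristic rather than a proof; the paper's tensor-product route sidesteps precisely these module-theoretic issues.
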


Theorem \ref{thm:APimpliesnuclearity} has been proved for actions of locally compact, Hausdorff, second countable, \emph{amenable} groupoids on separable $C^*$-algebras in \cite{LalondeNuclearity} 
and for \emph{continuous} Fell bundles over locally compact, Hausdorff, \'etale, \emph{amenable} groupoids in \cite{takeishi}. 
It was later generalized to (upper semicontinuous) Fell bundles over locally compact, Hausdorff, second countable, amenable groupoids in \cite{Lal19} by combining the result from \cite{LalondeNuclearity} with the stabilization theorem of \cite{WilliamsStabilization}.
We will follow the proof of \cite{LalondeNuclearity}, and use an upper semicontinuous version of the tensor product construction from \cite{takeishi}. 
The main steps of the proof are to
\begin{enumerate}
	\item construct tensor products of Fell bundles with $C^*$-algebras;
	\item establish compatibility of minimal tensor products with reduced cross-sectional algebras;
	\item establish compatibility of maximal tensor products with full cross-sectional algebras;
	\item show that the approximation property passes to tensor products;
	\item use Theorem \ref{thm:APimpliesWC} to finish the proof. 
\end{enumerate}

We begin with the construction of tensor products. Let $\mathscr B$ be a Fell bundle over $G$ and let $A$ be a separable $C^*$-algebra. For $g\in G$, denote by $\mathscr B_g\otimes A$ the completion of the algebraic tensor product $\mathscr B_g\odot A$ by the $\mathscr B_{s(g)}\otimes A$-valued right inner product 
	\[\langle b_1\otimes a_1,b_2\otimes a_2\rangle_{\mathscr B_{s(g)}\otimes A}\coloneqq b_1^*b_2\otimes a_1^*a_2,\quad b_1,b_2\in \mathscr B_g,~a_1,a_1\in A,\]
or equivalently by the $\mathscr B_{r(g)}\otimes A$-valued left inner product
\[{}_{\mathscr B_{r(g)}\otimes A}\langle b_1\otimes a_1,b_2\otimes a_2\rangle\coloneqq b_1b_2^*\otimes a_1a_2^*,\quad b_1,b_2\in \mathscr B_g,~a_1,a_1\in A.\]
Note that $\mathscr B_g\otimes A$ is the minimal exterior tensor product of the $\mathscr B_{r(g)}$-$\mathscr B_{s(g)}$-imprimitivity bimodule $\mathscr B_g$ with $A$ (see \cite[Chap.~4]{Lance}). We define the maximal tensor product $\mathscr B_g\otimes_{\max}A$ analogously, using inner products in $\mathscr B_{s(g)}\otimes_{\max}A$ and $\mathscr B_{r(g)}\otimes_{\max}A$. 

\begin{lem}\label{lem:tensor product is fell bundle}
	For every $(g,h)\in G^{(2)}$, the multiplication 
		\begin{equation}\label{eq:multiplication on tensor product1}
			(\mathscr B_g\odot A)\odot (\mathscr B_h\odot A)\to \mathscr B_{gh}\odot A,\quad (b\otimes a)\otimes (b'\otimes a')\mapsto bb'\otimes aa'
		\end{equation}
	extends continuously to a map
		\begin{equation}\label{eq:multiplication on tensor product}
			(\mathscr B_g\otimes A)\otimes_{\mathscr B_{s(g)}\otimes A} (\mathscr B_h\otimes A)\to \mathscr B_{gh}\otimes A.
		\end{equation}
	The involution
		\[\mathscr B_g\odot A\to \mathscr B_{g^{-1}}\odot A,\quad b\otimes a\mapsto b^*\otimes a^*\]
	extends continuously to a map 
		\[\mathscr B_g\otimes A\to \mathscr B_{g^{-1}}\otimes A.\]
	The analogous statement holds for maximal tensor products. 
\end{lem}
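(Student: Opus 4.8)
The plan is to regard both maps as already defined on the algebraic tensor products and to show that, after descending to the appropriate balanced tensor product in the case of the multiplication, they are \emph{isometric} for the relevant Hilbert module inner products; the continuous extension to the completions is then automatic, since $\mathscr B_g\otimes A$ is by definition the completion of $\mathscr B_g\odot A$. I would work throughout on elementary tensors and invoke sesquilinearity to pass to finite sums, so that the only inputs are the Fell bundle axioms, the imprimitivity bimodule structure of Remark \ref{rem:imprimitivity bimodule}, and the definition of the (minimal, resp.\ maximal) tensor product inner products.

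For the multiplication, I would first check that the bilinear map \eqref{eq:multiplication on tensor product1} is balanced over $\mathscr B_{s(g)}\odot A=\mathscr B_{r(h)}\odot A$: for $c\in\mathscr B_{s(g)}$ and $d\in A$, both $\big((b\otimes a)(c\otimes d)\big)\otimes(b'\otimes a')$ and $(b\otimes a)\otimes\big((c\otimes d)(b'\otimes a')\big)$ (using the right, resp.\ left, module action) are sent to $bcb'\otimes ada'$ by associativity of the Fell bundle multiplication and of $A$. Hence the map descends to a linear map $M$ on the algebraic interior tensor product, whose completion is the domain of \eqref{eq:multiplication on tensor product}. Using the interior tensor product inner product $\langle\xi_1\otimes\eta_1,\xi_2\otimes\eta_2\rangle=\langle\eta_1,\langle\xi_1,\xi_2\rangle\eta_2\rangle$, a direct computation on elementary tensors $\xi_i=b_i\otimes a_i$ and $\eta_i=b_i'\otimes a_i'$ yields $\langle M(\xi_1\otimes\eta_1),M(\xi_2\otimes\eta_2)\rangle=b_1'^*b_1^*b_2b_2'\otimes a_1'^*a_1^*a_2a_2'=\langle\xi_1\otimes\eta_1,\xi_2\otimes\eta_2\rangle$. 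Thus $M$ preserves the interior tensor product form, so it carries its null space into that of $\mathscr B_{gh}\otimes A$ and extends to the desired isometry \eqref{eq:multiplication on tensor product}.

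For the involution, I would first note that $b\otimes a\mapsto b^*\otimes a^*$ is well defined as a conjugate-linear map $\mathscr B_g\odot A\to\mathscr B_{g^{-1}}\odot A$, since $b\mapsto b^*$ and $a\mapsto a^*$ are conjugate-linear in each variable. Writing $J$ for this map, a computation on elementary tensors gives $\langle J(b_1\otimes a_1),J(b_2\otimes a_2)\rangle_{\mathscr B_{r(g)}\otimes A}=b_1b_2^*\otimes a_1a_2^*={}_{\mathscr B_{r(g)}\otimes A}\langle b_1\otimes a_1,b_2\otimes a_2\rangle$; that is, $J$ intertwines the right inner product on $\mathscr B_{g^{-1}}\otimes A$ with the left inner product on $\mathscr B_g\otimes A$. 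Since $\mathscr B_g\otimes A$ is a $(\mathscr B_{r(g)}\otimes A)$-$(\mathscr B_{s(g)}\otimes A)$-imprimitivity bimodule (being the exterior tensor product of one with $A$), its left and right inner products induce the same norm, so $J$ is isometric and extends to $\mathscr B_g\otimes A\to\mathscr B_{g^{-1}}\otimes A$.

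The statement for maximal tensor products follows by repeating these computations verbatim, evaluating all inner products in $\mathscr B_\bullet\otimes_{\max}A$ instead; the algebraic identities used take place in $\mathscr B_\bullet\odot A$ before completion and are insensitive to the choice of $C^*$-norm. I expect the only real care to be bookkeeping: confirming that the interior tensor product in \eqref{eq:multiplication on tensor product} is well defined, in particular that the left action of $\mathscr B_{s(g)}\otimes A$ on $\mathscr B_h\otimes A$ is by adjointable operators (which holds because $\mathscr B_h\otimes A$ is an imprimitivity bimodule over $\mathscr B_{r(h)}\otimes A=\mathscr B_{s(g)}\otimes A$), rather than in the inner-product identities themselves, which are forced by the Fell bundle axioms.
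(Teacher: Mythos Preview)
Your argument is correct and is exactly the approach the paper takes: the paper's proof is the one-line remark that plugging in elementary tensors shows the maps preserve inner products and are therefore isometric with respect to the Hilbert module norms. You have simply spelled out the balancing check and the left/right inner product comparison that the paper leaves implicit.
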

\begin{proof}
	By plugging in elementary tensors, one easily checks that the maps \eqref{eq:multiplication on tensor product1} and \eqref{eq:multiplication on tensor product} preserve inner products and are thus isometric with respect to the Hilbert module norms. 
\end{proof}

Using Lemma \ref{lem:tensor product is fell bundle}, we equip the bundles $\mathscr B\otimes A\coloneqq \{\mathscr B_g\otimes A\}_{g\in G}$ and $\mathscr B\otimes_{\max}A\coloneqq \{\mathscr B_g\otimes_{\max}A\}_{g\in G}$ with the structure of an algebraic Fell bundle. Note that every $f\in C_c(\mathscr B)$ and $a\in A$ define a compactly supported section $f\otimes a$ of $\mathscr B\otimes A$ and of $\mathscr B\otimes_{\max}A$. We use these sections to define a topology on $\mathscr B\otimes A$ and $\mathscr B\otimes_{\max}A$. 

\begin{lem}
There is a unique topology on $\mathscr B\otimes_{\max}A$ turning it into a Fell bundle such that $C_c(\mathscr B)\odot A\subseteq C_c(\mathscr B\otimes_{\max}A)$ is a subalgebra of continuous sections which is dense in the inductive limit topology. 
If moreover $C_0(\mathscr B^{(0)})$ is exact, the same statement holds for the minimal tensor product $\mathscr B\otimes A$.
\end{lem}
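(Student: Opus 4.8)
The plan is to realize the asserted topology by a single application of Proposition \ref{prop:algebraic Fell bundles} to the $*$-algebra $A_0\coloneqq C_c(\mathscr B)\odot A$ of sections of the algebraic Fell bundle $\mathscr B\otimes_{\max}A$ (resp.\ $\mathscr B\otimes A$). A computation on elementary tensors gives $(f_1\otimes a_1)*(f_2\otimes a_2)=(f_1*f_2)\otimes a_1a_2$ and $(f\otimes a)^*=f^*\otimes a^*$, so $A_0$ is a $*$-subalgebra of compactly supported sections; it is closed under pointwise multiplication by $C_c(G)$ since $\varphi\cdot(f\otimes a)=(\varphi f)\otimes a$, and it is countably generated because $\mathscr B$ and $A$ are separable. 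Hence, once the two hypotheses of Proposition \ref{prop:algebraic Fell bundles} are checked, that proposition simultaneously supplies the existence and uniqueness of the Fell bundle topology and the inductive-limit density of $A_0=C_c(\mathscr B)\odot A$ in $C_c(\mathscr B\otimes_{\max}A)$ (and likewise in the minimal case). The first hypothesis, fibrewise density, is routine: for fixed $g$ the set $\{f(g):f\in C_c(\mathscr B)\}$ exhausts $\mathscr B_g$ because $\mathscr B$ has enough compactly supported continuous sections, and the elementary tensors are total in $\mathscr B_g\otimes_{\max}A$ (resp.\ $\mathscr B_g\otimes A$), so $\{F(g):F\in A_0\}$ is dense.

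The real work is condition \eqref{item:upper semicontinuous}: for $F=\sum_i f_i\otimes a_i\in A_0$ I must show $g\mapsto\|F(g)\|$ is upper semicontinuous. I would reduce this to the unit bundle through the Hilbert-module identity
\[
	\|F(g)\|^2=\Big\|\sum_{i,j}f_i(g)^*f_j(g)\otimes a_i^*a_j\Big\|_{\mathscr B_{s(g)}\otimes_{\max}A},
\]
and note that $g\mapsto\sum_{i,j}f_i(g)^*f_j(g)\otimes a_i^*a_j$ is a continuous section of the pullback $s^*(\mathscr B^{(0)}\otimes_{\max}A)$: indeed $g\mapsto f_i(g)^*f_j(g)$ is a continuous section of $s^*\mathscr B^{(0)}$ by continuity of the multiplication and involution of $\mathscr B$, and $b\mapsto b\otimes c$ is a continuous bundle map into the tensor-product bundle. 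Granting that $\mathscr B^{(0)}\otimes_{\max}A$ is a genuine upper semicontinuous $C^*$-bundle over $\GG$, upper semicontinuity of $g\mapsto\|F(g)\|$ then follows by pulling back along the continuous map $s$ and taking the continuous, monotone square root.

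Everything therefore rests on the claim that the $C_0(\GG)$-algebra $C_0(\mathscr B^{(0)})\otimes_{\max}A$ has fibres $\mathscr B_x\otimes_{\max}A$ for $x\in\GG$. For the maximal tensor product this is automatic: since $\otimes_{\max}$ is right exact, the quotient $C_0(\mathscr B^{(0)})\to\mathscr B_x$ induces a surjection $C_0(\mathscr B^{(0)})\otimes_{\max}A\to\mathscr B_x\otimes_{\max}A$ whose kernel is exactly $\overline{I_x\otimes_{\max}A}$, where $I_x\coloneqq\ker\big(C_0(\mathscr B^{(0)})\to\mathscr B_x\big)=C_0(\GG\setminus\{x\})C_0(\mathscr B^{(0)})$; this is precisely the fibre ideal, and no hypothesis is needed, giving the first assertion. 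For the minimal tensor product the analogous surjection always exists, but its kernel can strictly contain $\overline{I_x\otimes A}$, so the $C_0(\GG)$-fibre norm only \emph{dominates} the honest norm $\|\,\cdot\,\|_{\mathscr B_x\otimes A}$; since the former is automatically upper semicontinuous, this domination does not by itself transfer upper semicontinuity to the latter. The main obstacle is thus to show that the kernel is no larger than $\overline{I_x\otimes A}$, i.e.\ that $\otimes_{\min}$ is compatible with the $C_0(\GG)$-fibration, so that the minimal-tensor bundle is genuinely upper semicontinuous. This is exactly where exactness enters, and it holds because $C_0(\mathscr B^{(0)})$ is exact (by the behaviour of minimal tensor products of exact $C_0(X)$-algebras, in the spirit of Kirchberg and Wassermann). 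With this compatibility in hand the reduction of the previous paragraph applies verbatim, and Proposition \ref{prop:algebraic Fell bundles} yields both statements.
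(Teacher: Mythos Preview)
Your proof is correct and follows essentially the same route as the paper: both apply Proposition \ref{prop:algebraic Fell bundles} to $A_0=C_c(\mathscr B)\odot A$, and both isolate the upper semicontinuity of $g\mapsto\|F(g)\|$ as the only nontrivial verification. Where the paper simply invokes \cite[Lem.~2.3]{kirchberg1995operations} for the maximal case and additionally \cite[Prop.~3.1]{LalondeNuclearity} for the minimal case, you unpack precisely what those references say---namely that the $C_0(\GG)$-algebra $C_0(\mathscr B^{(0)})\otimes_{\max}A$ (resp.\ $\otimes$ under exactness) has fibres $\mathscr B_x\otimes_{\max}A$ (resp.\ $\mathscr B_x\otimes A$), so that the unit tensor bundle is genuinely upper semicontinuous---and then reduce the general fibre norm to the unit fibre via the Hilbert-module identity $\|F(g)\|^2=\|\langle F(g),F(g)\rangle\|_{\mathscr B_{s(g)}\otimes_{\max}A}$. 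This is exactly the content behind the citations, so the arguments are the same in substance; yours is just more self-contained.
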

\begin{proof}
	We check the assumptions of Proposition \ref{prop:algebraic Fell bundles}. Condition \eqref{item:fiberwise dense} follows from the fact that every point in $\mathscr B$ is the image of a continuous section $f\in C_c(\mathscr B)$ \cite[Prop. 3.4]{HofmannBundles}. For the maximal tensor product, condition \eqref{item:upper semicontinuous} follows from \cite[Lem.~2.3]{kirchberg1995operations}. For the minimal tensor product, condition \eqref{item:upper semicontinuous} follows from \cite[Lem.~2.3]{kirchberg1995operations} and \cite[Prop.~3.1]{LalondeNuclearity}.
\end{proof}
	
\begin{lem}\label{lem:reducedandminimalproduct}
	Let $\mathscr B$ be a Fell bundle over $G$ and let $A$ be a separable $C^*$-algebra. Assume that $C_0(\mathscr B^{(0)})$ is exact. Then the identity on $C_c(\mathscr B)\odot A$ extends to an isomorphism $C^*_r(\mathscr B)\otimes A\cong C^*_r(\mathscr B\otimes A)$. 
\end{lem}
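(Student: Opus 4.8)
The plan is to realize both sides as the closure of the common dense $*$-subalgebra $C_c(\mathscr B)\odot A$ acting spatially on a single Hilbert module, and to show that the two $C^*$-norms it inherits coincide. Write $D\coloneqq C_0(\mathscr B^{(0)})$. First I would identify the underlying modules. The map sending an elementary tensor $\xi\otimes a\in C_c(\mathscr B)\odot A$ to the section $g\mapsto \xi(g)\otimes a$ of $\mathscr B\otimes A$ is inner-product preserving: a direct check on elementary tensors gives $\langle \xi\otimes a,\eta\otimes b\rangle=\langle \xi,\eta\rangle\otimes a^*b$ both in the exterior Hilbert-module tensor product $L^2(\mathscr B)\otimes A$ (a Hilbert $D\otimes A$-module) and in $L^2(\mathscr B\otimes A)$ (a Hilbert $C_0((\mathscr B\otimes A)^{(0)})$-module). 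Using exactness of $D$ to identify $D\otimes A\cong C_0((\mathscr B\otimes A)^{(0)})$ — this is the restriction to $\GG$ of the previous lemma, built on \cite[Prop.~3.1]{LalondeNuclearity} — together with the density of $C_c(\mathscr B)\odot A$ in $C_c(\mathscr B\otimes A)$ in the inductive limit topology (hence in $L^2$-norm), this extends to a unitary $U\colon L^2(\mathscr B)\otimes A\xrightarrow{\cong}L^2(\mathscr B\otimes A)$ of Hilbert $C_0((\mathscr B\otimes A)^{(0)})$-modules.

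Next I would transport the regular representation through $U$. A direct computation on sections of the form $g\mapsto \xi(g)\otimes b$ shows, for all $f\in C_c(\mathscr B)$ and $a\in A$, that $\Lambda_{\mathscr B\otimes A}(f\otimes a)$ sends such a section to $g\mapsto (\Lambda_{\mathscr B}(f)\xi)(g)\otimes ab$, which is precisely $U$ applied to $(\Lambda_{\mathscr B}(f)\otimes a)(\xi\otimes b)$, where $a$ acts on $A$ by left multiplication, i.e.\ $a\in A\subseteq M(A)=\mathcal L_A(A)$. Hence $\Ad(U)\big(\Lambda_{\mathscr B}(f)\otimes a\big)=\Lambda_{\mathscr B\otimes A}(f\otimes a)$. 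Here $\Lambda_{\mathscr B}(f)\otimes a$ denotes the exterior tensor product operator living in $\mathcal L_D(L^2(\mathscr B))\otimes A$, which embeds isometrically into $\mathcal L_{D\otimes A}(L^2(\mathscr B)\otimes A)$ via the standard identifications $\mathcal K(E)\otimes A\cong \mathcal K(E\otimes A)$ and $M(B)\otimes A\hookrightarrow M(B\otimes A)$. By the defining property of the minimal tensor norm, the closure of the algebraic span $\Lambda_{\mathscr B}(C_c(\mathscr B))\odot A$ of these operators in operator norm is exactly $C^*_r(\mathscr B)\otimes A$.

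Finally I would take closures. By the previous step $\Ad(U)$ carries the dense subalgebra $\Lambda_{\mathscr B}(C_c(\mathscr B))\odot A$ of $C^*_r(\mathscr B)\otimes A$ onto $\Lambda_{\mathscr B\otimes A}(C_c(\mathscr B)\odot A)$, and the latter is dense in $C^*_r(\mathscr B\otimes A)$ because $C_c(\mathscr B)\odot A$ is dense in $C_c(\mathscr B\otimes A)$. As $\Ad(U)$ is isometric, it restricts to an isometric $*$-isomorphism $C^*_r(\mathscr B)\otimes A\cong C^*_r(\mathscr B\otimes A)$, and by construction it is the identity on $C_c(\mathscr B)\odot A$, as required.

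The main obstacle is not any hard estimate but the functional-analytic bookkeeping around the coefficient algebras and adjointable-operator tensor products: specifically the identification $C_0(\mathscr B^{(0)})\otimes A\cong C_0((\mathscr B\otimes A)^{(0)})$ and the isometric embedding $\mathcal L_D(L^2(\mathscr B))\otimes A\hookrightarrow \mathcal L(L^2(\mathscr B)\otimes A)$. The former is precisely where exactness of $C_0(\mathscr B^{(0)})$ is indispensable — it is what makes $\mathscr B\otimes A$ a genuine Fell bundle with the expected unit fibers and what forces the minimal tensor norm to be compatible with the bundle structure — whereas the spatial computation in the second step is purely formal once the module identification $U$ is in place.
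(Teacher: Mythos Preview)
Your argument is correct and follows the same underlying strategy as the paper: build a unitary between the two carrier modules and verify that it intertwines the regular representations on the dense subalgebra $C_c(\mathscr B)\odot A$. The only difference is the level at which the computation is carried out. The paper fixes faithful non-degenerate representations $\pi\colon C_0(\mathscr B^{(0)})\to\mathcal L(H_1)$ and $\rho\colon A\to\mathcal L(H_2)$, passes to the Hilbert-space representations $\Ind\pi\otimes\rho$ and $\Ind(\pi\otimes\rho)$, and constructs a unitary $(L^2(\mathscr B)\otimes_\pi H_1)\otimes H_2\cong L^2(\mathscr B\otimes A)\otimes_{\pi\otimes\rho}(H_1\otimes H_2)$; you instead stay at the Hilbert-module level and identify $L^2(\mathscr B)\otimes A\cong L^2(\mathscr B\otimes A)$ directly over $D\otimes A$. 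Your route is slightly more streamlined but requires the isometric embedding $\mathcal L_D(E)\otimes_{\min} A\hookrightarrow \mathcal L_{D\otimes A}(E\otimes A)$ (equivalently, that $\mathcal K(E)\otimes A$ is an essential ideal in $\mathcal L(E)\otimes A$), which you correctly flag; the paper's route avoids that piece of bookkeeping by working spatially on Hilbert spaces from the start. Either way the exactness hypothesis enters only through the construction of $\mathscr B\otimes A$ and the identification $C_0((\mathscr B\otimes A)^{(0)})\cong D\otimes A$, exactly as you say.
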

\begin{proof}
	The exactness assumption on $C_0(\mathscr B^{(0)})$ is only used for the construction of $\mathscr B\otimes A$. 
	Fix faithful non-degenerate representations $\pi\colon C_0(\mathscr B^{(0)})\to \mathcal L(H_1)$ and $\rho\colon A\to \mathcal L(H_2)$. We obtain faithful non-degenerate representations
	\[(\Ind \pi)\otimes \rho\colon C^*_r(\mathscr B)\otimes A\hookrightarrow\mathcal L((L^2(\mathscr B)\otimes_\pi H_1)\otimes H_2\]
		and 
	\[\Ind(\pi\otimes \rho)\colon C^*_r(\mathscr B\otimes A)\hookrightarrow \mathcal L(L^2(\mathscr B\otimes A)\otimes_{\pi\otimes\rho}(H_1\otimes H_2)),\]
	where $\Ind \pi$ and $\Ind(\pi\otimes \rho)$ are defined as in Lemma \ref{lem:williams}. We claim that there is a unitary operator 
	\[U\colon L^2(\mathscr B\otimes A)\otimes_{\pi\otimes\rho}(H_1\otimes H_2)\to (L^2(\mathscr B)\otimes_\pi H_1)\otimes H_2 \]
	satisfying 
		\begin{equation}\label{eq:U on dense subspace}
			U((f\otimes a)\otimes (\xi\otimes \eta))\coloneqq (f\otimes \xi)\otimes \rho(a)\eta 
		\end{equation}
	for all $f\in C_c(\mathscr B),~a\in A,~\xi\in H_1,~\eta\in H_2$. Indeed, since $C_c(\mathscr B)\odot A\subseteq C_c(\mathscr B\otimes A)$ is dense in the inductive limit topology, $C_c(\mathscr B)\otimes A\subseteq L^2(\mathscr B\otimes A)$ is dense in norm. Therefore \eqref{eq:U on dense subspace} defines $U$ on a dense subspace. A calculation on elementary tensors shows that $U$ preserves inner products and is therefore isometric. Since $\rho$ is non-degenerate, we conclude that $U$ has dense image and is thus a unitary. 
	Another calculation on elementary tensors shows that we have 
		\[((\Ind \pi)\otimes \rho) (f\otimes a)U=U \Ind(\pi\otimes \rho)(f\otimes a)\]
	for all $f\in C_c(\mathscr B)$ and $a\in A$. Therefore $\Ad(U)$ restricts to the desired isomorphism $C^*_r(\mathscr B)\otimes A\cong C^*_r(\mathscr B\otimes A)$.
\end{proof}

\begin{lem}\label{lem:maximalandmaximalproduct}
	Let $\mathscr B$ be a Fell bundle over $G$ and let $A$ be a unital separable $C^*$-algebra. Then the identity on $C_c(\mathscr B)\odot A$ extends to an isomorphism \[C^*(\mathscr B)\otimes_{\max}A\cong C^*(\mathscr B\otimes_{\max}A).\]
\end{lem}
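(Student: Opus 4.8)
We must show that for a unital separable $C^*$-algebra $A$, the identity map on the dense $*$-subalgebra $C_c(\mathscr B)\odot A$ extends to an isomorphism $C^*(\mathscr B)\otimes_{\max}A \cong C^*(\mathscr B\otimes_{\max}A)$. Both sides are maximal/enveloping $C^*$-algebras, so the natural strategy is to exhibit a bijective correspondence between their nondegenerate representations on Hilbert space and to verify that the corresponding representations have the same norm on elements of $C_c(\mathscr B)\odot A$, whence the two enveloping norms agree and the identity extends to an isomorphism.

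**The plan.** First I would note that since $A$ is unital, the universal property of the maximal tensor product gives, for a pair of commuting representations, a single representation of the algebraic tensor product, and that representations of $C^*(\mathscr B)$ are exactly integrated forms of strict representations by Theorem~\ref{thm:Fell-disintegration}. The key observation is that a strict representation of the tensor bundle $\mathscr B\otimes_{\max}A$ should be the same data as a strict representation $(\mu,\mathscr H,\pi)$ of $\mathscr B$ together with a representation $\rho$ of $A$ on the \emph{same} fibered Hilbert bundle, such that $\rho$ commutes with $\pi_g(b)$ for all $g\in G$ and $b\in \mathscr B_g$ (in the appropriate fibered sense, i.e.\ $\rho$ acts fiberwise on $\mathscr H_x$ commuting with each $\pi_x(\mathscr B_x)$ and is intertwined by the $\pi_g$). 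Concretely, the $*$-functor for $\mathscr B\otimes_{\max}A$ sends $b\otimes a \mapsto \pi_g(b)\rho(a)$, and the condition $\|b^*b\otimes a^*a\| = \|b^*b\|_{\max}\cdots$ from the maximal-tensor-product structure is exactly what forces $\rho$ to land in the commutant and guarantees the $*$-functor is well defined and bounded. I would prove this correspondence is a bijection between strict representations of $\mathscr B\otimes_{\max}A$ and commuting pairs $(L_\pi,\rho)$, and then check that taking integrated forms matches $L_\pi\otimes_{\max}\rho$ on the nose on $C_c(\mathscr B)\odot A$.

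**Carrying it out.** Given a nondegenerate representation of $C^*(\mathscr B\otimes_{\max}A)$, disintegrate it via Theorem~\ref{thm:Fell-disintegration} into a strict representation $(\mu,\mathscr H,\Pi)$ of $\mathscr B\otimes_{\max}A$. Restricting $\Pi$ to the unit fibers and to the subbundle $\mathscr B\otimes 1$ produces a strict representation of $\mathscr B$, while $\Pi$ restricted to $1\otimes A$ (using that $A$ is unital, so $1\in A$ gives a genuine element $1_{\mathscr B_x}\otimes 1_A$ in each unit fiber after identifying via saturation) produces a fiberwise representation of $A$ commuting with the image of $\mathscr B$; the maximality of the tensor norm on each fiber is precisely what is needed for this decomposition to be consistent and for the two pieces to commute. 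Conversely, given a strict representation of $\mathscr B$ with integrated form $L_\pi$ and a commuting $\rho$, the universal property of $\otimes_{\max}$ on each fiber assembles $b\otimes a\mapsto \pi_g(b)\rho_{s(g)}(a)$ into a strict representation of $\mathscr B\otimes_{\max}A$ whose integrated form is $L_\pi\otimes_{\max}\rho$. One then checks on $C_c(\mathscr B)\odot A$ that the universal norm induced by all such integrated forms on the right agrees with $\|\cdot\|_{C^*(\mathscr B)\otimes_{\max}A}$; since the supremum on each side runs over the same family of representations, the enveloping norms coincide and the identity on $C_c(\mathscr B)\odot A$ extends to the desired isomorphism.

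**The main obstacle.** The hard part will be the careful bookkeeping of the fibered structure: making precise that a strict representation of the tensor bundle really splits as a $\mathscr B$-part plus a commuting $A$-part on a common Hilbert bundle, and ensuring the commutation and measurability ($*$-functor being Borel) survive the disintegration. In particular one must verify that the representation $\rho$ of $A$ obtained fiberwise is \emph{global}, i.e.\ assembles into a genuine representation of $A$ (not merely a measurable field), and that it lands in the commutant of $L_\pi(C^*(\mathscr B))$ so that the universal property of $\otimes_{\max}$ applies—this is where unitality of $A$ and saturation of $\mathscr B$ are essential, since they let us realize $1\otimes A$ inside the cross-sectional algebra and transport the commutation relation consistently across fibers.
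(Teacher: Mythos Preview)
Your overall strategy is the same as the paper's: construct maps in both directions extending the identity on $C_c(\mathscr B)\odot A$. The paper's map $\Phi\colon C^*(\mathscr B)\otimes_{\max}A\to C^*(\mathscr B\otimes_{\max}A)$ is obtained directly from universal properties (no disintegration), so your first direction is more laborious than necessary, but not wrong.

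The substantive issue is in your ``Conversely'' paragraph, which corresponds to the paper's hard map $\Psi$, and where you have mislocated the obstacle. You write ``given a strict representation of $\mathscr B$ with integrated form $L_\pi$ and a commuting $\rho$, the universal property of $\otimes_{\max}$ on each fiber assembles $b\otimes a\mapsto \pi_g(b)\rho_{s(g)}(a)$ into a strict representation''. But what you are handed is that $\rho$ commutes with $L_\pi$ \emph{globally} on $L^2(\mathscr H)$; you do not yet have a fiberwise family $(\rho_x)$, and even after disintegrating $\rho$ (possible because $\rho$ commutes with $L_\pi(C_0(\mathscr B^{(0)}))$ and hence with the diagonal operators), you only know $\rho_x$ commutes with $\pi_x(\mathscr B_x)$ $\mu$-almost everywhere. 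To define $\pi_g(b)\rho_{s(g)}(a)$ as a $*$-functor you need the intertwining relation $\pi_g(b)\rho_{s(g)}(a)=\rho_{r(g)}(a)\pi_g(b)$ for $\mu\circ\lambda$-almost all $g\in G$, and this does \emph{not} follow formally from global commutation. The paper devotes most of its proof (Claim~1) to precisely this: one uses a countable basis of open bisections and countable dense sets in $A$ and in the $\mathscr B_g$ to show that the failure set is $\mu$-null, and saturation of $\mathscr B$ to show that the good set is $G$-invariant. This is where \'etaleness and saturation are actually used; your obstacle paragraph instead worries about whether $\rho$ is ``global'' and about realizing $1\otimes A$, which are not the crux. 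Once this fiberwise commutation is established, Claims~2 and~3 (that $\pi_g\times\rho_{s(g)}$ extends to $\mathscr B_g\otimes_{\max}A$ and that the resulting $*$-functor is Borel) are routine and close to what you sketched.
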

\begin{proof}
	We prove the Lemma by constructing $*$-homomorphisms $\Phi\colon C^*(\mathscr B)\otimes_{\max}A\to C^*(\mathscr B\otimes_{\max}A)$ and $\Psi\colon C^*(\mathscr B\otimes_{\max}A)\to C^*(\mathscr B)\otimes_{\max}A$ that extend the identity on $C_c(\mathscr B)\odot A$. 
	
	\subsubsection*{Construction of $\Phi$:}
Denote by $\iota\colon C_c(\mathscr B)\odot A\hookrightarrow C^*(\mathscr B\otimes_{\max}A)$ the canonical inclusion. Since $G$ is \'etale, we have a non-degenerate inclusion 
	\[C_0(\mathscr B^{(0)})\otimes_{\max} A=C_0\left((\mathscr B\otimes_{\max}A)^{(0)}\right)\subseteq C^*(\mathscr B\otimes_{\max}A).\]
There is an approximate unit for $C^*(\mathscr B\otimes_{\max}A)$ in $C_c(\mathscr B^{(0)})\otimes 1$, in particular there is an approximate unit in $C_c(\mathscr B)\otimes 1$. 
Now the standard arguments as in \cite[Lem. 6.3.4, Thm. 6.3.5]{Murphy} imply that we can find commuting non-degenerate $*$-homomorphisms $\iota_\mathscr B\colon C_c(\mathscr B)\to C^*(\mathscr B\otimes_{\max}A)$ and $\iota_A\colon A\to M(C^*(\mathscr B\otimes_{\max}A))$ such that 
	\[\iota(f\otimes a)=\iota_\mathscr B(f)\iota_A(a)=\iota_A(a)\iota_\mathscr B(f),\quad \forall f\in C_c(\mathscr B), ~a\in A.\]
Denote by $\overline {\pi_\mathscr B}$ the extension of $\pi_\mathscr B$ to the enveloping algebra $C^*(\mathscr B)$ of $C_c(\mathscr B)$. By the universal property of the maximal tensor product, there is a unique $*$-homomorphism 
	\[\Phi\colon C^*(\mathscr B)\otimes_{\max}A\to C^*(\mathscr B\otimes_{\max}A)\]
such that 
	\[\Phi(f\otimes a)=\overline{\pi_\mathscr B}(f)\pi_A(a)=\pi_A(a)\overline{\pi_\mathscr B}(f),\quad \forall f\in C^*(\mathscr B), a\in A.\]
	In particular, $\Phi$ extends the identity on $C_c(\mathscr B)\odot A$.
	
\subsubsection*{Construction of $\Psi$:}
Fix a faithful, non-degenerate representation $\Pi\colon C^*(\mathscr B)\otimes_{\max} A\hookrightarrow \mathcal L(H)$ on a separable Hilbert space $H$. Then there are non-degenerate representations $\rho\colon A\to \mathcal L(H)$ and $L\colon C^*(\mathscr B)\to \mathcal L(H)$ such that 
	\begin{equation}\label{eq: L and rho}
		\Pi(f\otimes a)= L(f)\rho(a)=\rho(a)L(f),\quad \forall f\in C^*(\mathscr B), a\in A.
	\end{equation}
By Theorem \ref{thm:Fell-disintegration}, we can identify $L$ with the integrated form $L_\pi$ of a strict representation $(\mu,\mathscr H,\pi)$ of $\mathscr B$ and write $H=L^2(G^{(0)},\mathscr H,\mu)$. Since $\rho(A)$ commutes with $L(C_0(\mathscr B^{(0)}))$, it also commutes with the image of $C_0(G^{(0)})$ and therefore $\rho(A)$ commutes with the diagonal operators on $L^2(G^{(0)},\mathscr H,\mu)$. By Proposition \ref{direct integral representation}, there is a Borel family of representations $(\rho_x\colon A\to \mathcal L(\mathscr H_x))_{x\in G^{(0)}}$ such that $\rho=\int_{G^{(0)}}\rho_xd\mu(x)$. 

\begin{claim}\label{clm: commute on g}
	Denote by $\mathcal U\subseteq G^{(0)}$ the set of all $x\in G^{(0)}$ such that for all $g\in G_x$, we have
		\begin{equation}\label{eq: commute on g}
			\pi_g(b)\rho_{s(g)}(a)=\rho_{r(g)}(a)\pi_g(b),\quad \forall b\in \mathscr B_g, a\in A.
		\end{equation}
	Then $\mathcal U$ is a $G$-invariant Borel set with $\mu(G^{(0)}\setminus \mathcal U)=0$.
\end{claim}

To see that $\mathcal U$ is $G$-invariant, let $x\in \mathcal U$ and $g\in G_x$. Then for any $h\in G_{r(g)}, b\in \mathscr B_h,b'\in \mathscr B_g$ and $a\in A$, we have 
			\begin{align*}
				\pi_h(b)\rho_{s(h)}(a)\pi_g(b')=&\pi_h(b)\pi_g(b')\rho_{s(g)}(a)=\pi_{hg}(bb')\rho_{s(g)}(a)\\
				=&\rho_{r(h)}(a)\pi_{hg}(bb')=\rho_{r(h)}(a)\pi_h(b)\pi_g(b'),
			\end{align*}
where we have use $x\in \mathcal U$ at the first and third equation.
Since $\mathscr B$ is saturated (Condition \eqref{item:saturated} in Definition \ref{defn:Fell bundle}) and since $b'\in \mathscr B_g$ was arbitrary, this implies $\pi_h(b)\rho_{s(h)}(a)=\rho_{r(h)}(a)\pi_h(b)$. Therefore $\mathcal U$ is $G$-invariant. 

We now show that $\mathcal U$ is Borel with $\mu(G^{(0)}\setminus \mathcal U)=0$. Fix a dense sequence $(a_n)_{n\in \N}$ in $A$ and a sequence $(U_k)_{k\in \N}$ of open bisections in $G$ with $G=\bigcup_{k\in \N}U_k$. For each $k\in \N$, fix a sequence $(f_m^{(k)})_{m\in \N}$ of sections in $C_c(U_k,\mathscr B)$ such that for each $g\in U_k$, the set $\{f_m^{(k)}(g)\mid m\in \N\}\subseteq \mathscr B_g$ is dense. 
By the choices of $(a_n)_{n\in \N}, (U_k)_{k\in \N}$ and $(f_m^{(k)})_{m\in \N}$, we have 

			\begin{equation}\label{eq: formula for mathcal U}
			G^{(0)}\setminus\mathcal U=\bigcup_{k\in \N} s\left(\bigcup_{n,m\in \N}\left\lbrace g\in U_k \mid \pi_g(f^{(k)}_m(g))\rho_{s(g)}(a_n)\not=\rho_{r(g)}(a_n)\pi_g(f^{(k)}_m(g))\right\rbrace\right).
			\end{equation}
For fixed $n,k,m\in \N$, \eqref{eq: L and rho} implies $L(f^{(k)}_m)\rho(a_n)=\rho(a_n)L(f^{(k)}_m)\in \mathcal L(L^2(G^{(0)},\mathscr H,\mu))$. Both $L(f^{(k)}_m)\rho(a_n)$ and $\rho(a_n)L(f^{(k)}_m)$ restrict to decomposable operators 
	\[L^2(s(U_k),\mathscr H|_{s(U_k)},\mu)\to L^2(r(U_k),\mathscr H|_{r(U_k)},\mu),\]
		where we view $\mathscr H|_{r(U_k)}$ as a Borel Hilbert bundle over $s(U_k)$ via the canonical homeomorphism $s(U_k)\cong U_k\cong r(U_k)$. Using Theorem \ref{Thm unique decomposition}, this yields
			\begin{align*}
				&\pi_g(f^{(k)}_m(g))\rho_{s(g)}(a_n)=\Delta^{\frac 1 2}(g)(L(f^{(k)}_m)\rho(a_n))_{s(g)}\\
				=&\Delta^{\frac 1 2}(g)(\rho(a_n)L(f^{(k)}_m))_{s(g)}=\rho_{r(g)}(a_n)\pi_g(f^{(k)}_m(g))
			\end{align*}
		for $r^*\mu$-almost all $g\in U_k$. 
		This together with \eqref{eq: formula for mathcal U} proves that $\mathcal U$ is Borel with $\mu(G^{(0)}\setminus \mathcal U)=0$. This establishes Claim \ref{clm: commute on g}. \\

\begin{claim}\label{claim: extend continuously}
	For each $g\in G|_\mathcal U$, the map
			\begin{equation}\label{eq:times star functor}
				\pi_g\times \rho_{s(g)}\colon\mathscr B_g\odot A\to \mathcal{L}(\mathscr H_{s(g)},\mathscr H_{r(g)}),\quad \pi_g\times \rho_{s(g)}(b\otimes a)\coloneqq\pi_g(b)\rho_{s(g)}(a)
			\end{equation}
		extends continuously to $\mathscr B_g\otimes_{\max}A$.
\end{claim}
		Indeed, for $\sum_i b_i\odot a_i\in \mathscr B_g \odot A$ we have
			\begin{align*}
				&\left\|\pi_g\times \rho_{s(g)}\left(\sum_i b_i\otimes a_i\right)\right\|^2
				= \left\|\sum_{i,j}\rho_{s(g)}(a_i^*)\pi_{s(g)}(b_i^*b_j)\rho_{s(g)}(a_j)\right\| \\
				=&\left\| \sum_{i,j} \pi_{s(g)}(b_i^*b_j)\rho_{s(g)}(a_i^*a_j)\right\|
				=\left\| \pi_{s(g)}\times \rho_{s(g)}\left(\left(\sum_i b_i\otimes a_i\right)^*\left(\sum_j b_j\otimes a_j\right)\right)\right\| \\
				\leq &\left\|\left(\sum_i a_i\otimes b_i\right)^*\left(\sum_j a_j\otimes b_j\right)\right\|_{\mathscr B_{s(g)}\otimes_{\max}A}=\left \|\sum_i b_i\otimes a_i\right\|^2_{\mathscr B_g\otimes_{\max}A},
			\end{align*}
		where we have used $g\in G|_{\mathcal U}$ at the second equality. This proves Claim \ref{claim: extend continuously}.

		Note that since $\mathcal U\subseteq G^{(0)}$ is $G$-invariant, $G|_{\mathcal U}\coloneqq r^{-1}(\mathcal U)$ is a Borel subgroupoid whose complement $G\setminus G|_{\mathcal U}=G|_{\mathcal G^{(0)}\setminus \mathcal U}$ is a Borel subgroupoid of measure zero. 
		We define $\pi_g\times \rho_{s(g)}$ as in \eqref{eq:times star functor} for $g\in G|_{\mathcal U}$ and $\pi_g\times \rho_{s(g)}\coloneqq 0$ for $g\in G|_{G^{(0)}\setminus\mathcal U}$. It now easily follows from Claim \ref{clm: commute on g} and the above that $\pi\times \rho\coloneqq\{\pi_g\times \rho_{s(g)}\}_{g\in G}$ defines a $*$-functor 
			\begin{equation}\label{eq:pitimesrho}
				\pi\times \rho\colon\mathscr B\otimes_{\max} A\to \End(\mathscr H).
			\end{equation}
		\begin{claim}\label{claim:Borel}
			$\pi \times \rho$ is Borel.
		\end{claim}
		Let $\xi,\eta\in B(G^{(0)},\mathscr H)$ be Borel sections and let $f\in C_c(\mathscr B\otimes_{\max}A)$. We have to prove that the map
			\[G\ni g\mapsto c(g)\coloneqq\langle \pi\times \rho(f(g))\xi(s(g)),\eta(r(g))\rangle\in \C \]
		is Borel. Since by construction of $\mathscr B\otimes_{\max}A$, we can write $f$ as a norm limit of a sequence in $C_c(\mathscr B)\odot A$, we can assume $f\in C_c(\mathscr B)\odot A$. By linearity, we can assume that $f=h\otimes a$ for $h\in C_c(\mathscr B)$ and $a\in A$. But then 
			\[c(g)=\begin{cases}
			\langle \pi_g(h(g))\xi(s(g)),\rho_{r(g)}(a^*)\eta(r(g))\rangle, & g\in G|_{\mathcal U}\\ 0,&g\notin G|_{\mathcal U}
			\end{cases}\]
		is Borel in $g\in G$ since $\pi$ is a Borel $*$-functor and since $\{\rho_x\}_{x\in G^{(0)}}$ is a Borel family of representations. This proves Claim \ref{claim:Borel}.
		
		Now denote by 
			\[\Psi\colon C^*(\mathscr B\otimes_{\max}A)\to \mathcal L(L^2(G^{(0)},\mathscr H,\mu))\]
		the integrated form of the Borel-$*$-functor $\pi\times \rho$ as defined in \eqref{eq:pitimesrho}. By construction we have $\Psi(f\otimes a)=\Pi(f\otimes a)$ for every $f\in C_c(\mathscr B)$ and $a\in A$. In particular, $\Psi$ maps onto $\Pi(C^*(\mathscr B)\otimes_{\max}A)\cong C^*(\mathscr B)\otimes_{\max}A$ and has the desired properties. 
\end{proof}

\begin{lem}\label{lem:easynuclearity}
	Let $B$ be a $C^*$-algebra. Then $B$ is nuclear if and only if for every unital, separable $C^*$-algebra $A$, we have $A\otimes_{\max}B=A\otimes B$. 
\end{lem}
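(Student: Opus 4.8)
The ``only if'' direction is immediate from the definition of nuclearity: if $B$ is nuclear, then $A\otimes_{\max}B=A\otimes B$ holds for \emph{every} $C^*$-algebra $A$, in particular for every unital separable one. So the content is the converse, and the plan is to reduce nuclearity of $B$ — the equality $A\otimes_{\max}B=A\otimes B$ for an \emph{arbitrary} $C^*$-algebra $A$ — to the hypothesis, which only concerns unital separable algebras. The reduction is by a standard separability-and-unitization argument applied one element at a time.

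Concretely, I would fix $x=\sum_{i=1}^n a_i\otimes b_i\in A\odot B$ and show that its maximal and minimal tensor norms agree. Let $A_0\subseteq A$ be the $C^*$-subalgebra generated by $a_1,\dots,a_n$; it is separable, and its unitization $A_0^+$ is unital and separable, with $A_0$ sitting inside $A_0^+$ as the kernel ideal of $A_0^+\to\C$. Since $x\in A_0\odot B$, the whole argument takes place inside $A_0^+$ and $A$. The key is the chain
\[
\|x\|_{A\otimes_{\max}B}\le\|x\|_{A_0\otimes_{\max}B}=\|x\|_{A_0^+\otimes_{\max}B}=\|x\|_{A_0^+\otimes B}=\|x\|_{A_0\otimes B}=\|x\|_{A\otimes B}\le\|x\|_{A\otimes_{\max}B},
\]
which forces all terms to coincide; in particular $\|x\|_{A\otimes_{\max}B}=\|x\|_{A\otimes B}$, and as $x$ was arbitrary this gives $A\otimes_{\max}B=A\otimes B$.

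It then remains to justify each link. The leftmost inequality is functoriality of the maximal norm: a commuting pair of representations of $A$ and $B$ restricts to one of $A_0$ and $B$, so restriction yields $\|x\|_{A\otimes_{\max}B}\le\|x\|_{A_0\otimes_{\max}B}$. The first equality uses that $A_0$ is an \emph{ideal} in $A_0^+$ together with exactness of the maximal tensor product, which makes $A_0\otimes_{\max}B\hookrightarrow A_0^+\otimes_{\max}B$ isometric. The middle equality $\|x\|_{A_0^+\otimes_{\max}B}=\|x\|_{A_0^+\otimes B}$ is precisely the hypothesis applied to the unital separable algebra $A_0^+$. The next two equalities are injectivity of the minimal tensor product under the subalgebra inclusions $A_0\subseteq A_0^+$ and $A_0\subseteq A$, and the rightmost inequality is the universal estimate $\|\cdot\|_{\min}\le\|\cdot\|_{\max}$.

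The one point that requires care, and that dictates the shape of the argument, is that the minimal tensor product is isometric on subalgebra inclusions while the maximal tensor product of a subalgebra inclusion is only norm-decreasing in general — it becomes isometric exactly for \emph{ideals}. This is why I route the comparison through the unitization $A_0^+$, inside which $A_0$ is an ideal, rather than comparing $A_0\otimes_{\max}B$ with $A\otimes_{\max}B$ directly; and it is why the single merely-norm-decreasing step points in the direction that lets the chain close up. All the invoked facts — injectivity of $\otimes$, exactness of $\otimes_{\max}$, and the relevant universal properties — are standard, so the substance of the lemma lies in assembling them into the displayed chain.
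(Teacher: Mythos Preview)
Your proof is correct and follows essentially the same route as the paper's. The paper cites \cite[Prop.~5.2]{LalondeNuclearity} to reduce to separable $A$ and then invokes the unitization-and-ideal argument, whereas you carry out both reductions explicitly in a single element-wise chain; the underlying ingredients (injectivity of $\otimes$ on subalgebras, exactness of $\otimes_{\max}$ on ideals) are identical.
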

\begin{proof}
	By \cite[Prop. 5.2]{LalondeNuclearity}, it suffices to check $A\otimes_{\max}B=A\otimes B$ for all separable $A$. Passing to the unitization of $A$ and using that tensor products preserve ideal inclusions reduces the situation to the case that $A$ is unital.
\end{proof}

\begin{lem}\label{lem:APandtensorproducts}
	Let $\mathscr B$ be a Fell bundle over $G$ with the approximation property and let $A$ be a unital, separable $C^*$-algebra. Then $\mathscr B\otimes_{\max}A$ has the approximation property. 
	If additionally $C_0(\mathscr B^{(0)})$ is exact, then $\mathscr B\otimes A$ has the approximation property as well. 
\end{lem}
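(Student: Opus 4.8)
The plan is to promote the sequence $(a_i)_{i\in\N}$ witnessing the approximation property for $\mathscr B$ to a sequence for $\mathscr B\otimes_{\max}A$ by tensoring with the unit of $A$. Concretely, I would set $\tilde a_i\coloneqq a_i\otimes 1_A$. This is a compactly supported continuous section of $r^*(\mathscr B\otimes_{\max}A)^{(0)}$: the fiberwise inclusion $\mathscr B^{(0)}\to(\mathscr B\otimes_{\max}A)^{(0)}$, $b\mapsto b\otimes 1_A$, corresponds under Remark \ref{rem:C_0(X)-alg} to the $C_0(\GG)$-linear $*$-homomorphism $C_0(\mathscr B^{(0)})\to C_0(\mathscr B^{(0)})\otimes_{\max}A=C_0\big((\mathscr B\otimes_{\max}A)^{(0)}\big)$, hence is a continuous bundle map, and composing with $a_i$ gives a continuous section with $\supp\tilde a_i=\supp a_i$. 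Unitality of $A$ is used precisely here, to make $1_A$ available.

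First I would verify condition \eqref{item:uniformly bounded}. A direct computation gives $\langle\tilde a_i,\tilde a_i\rangle(x)=\langle a_i,a_i\rangle(x)\otimes 1_A$ for every $x\in\GG$. Since $A$ is unital, the map $b\mapsto b\otimes 1_A$ from any $C^*$-algebra into its maximal tensor product with $A$ is isometric (apply the contractive slice map $\id\otimes\phi$ for a state $\phi$ on $A$). Hence $\|\langle\tilde a_i,\tilde a_i\rangle\|=\|\langle a_i,a_i\rangle\|$, and the supremum over $i$ stays finite.

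The core is condition \eqref{item:almostinvariant}. I would check it first on the dense subalgebra $C_c(\mathscr B)\odot A$: for $F=f\otimes a$ the identity $(a_i(h)\otimes 1)^*(f(g)\otimes a)(a_i(g^{-1}h)\otimes 1)=a_i(h)^*f(g)a_i(g^{-1}h)\otimes a$ shows $F_i=f_i\otimes a$, where $f_i$ is the sequence furnished by the approximation property of $\mathscr B$; since $f_i\to f$ in the inductive limit topology (with $\supp f_i\subseteq\supp f$) and $\|b\otimes a\|\le\|b\|\,\|a\|$ fiberwise, we get $F_i\to F$, and the general case follows by linearity. I would then pass to arbitrary $F\in C_c(\mathscr B\otimes_{\max}A)$ by a $\tfrac\varepsilon3$-argument, using inductive-limit density of $C_c(\mathscr B)\odot A$ together with the uniform bound
\[
\|F_i(g)\|\le\|F(g)\|\,\|\langle\tilde a_i,\tilde a_i\rangle\|\le\Big(\sup_i\|\langle\tilde a_i,\tilde a_i\rangle\|\Big)\|F\|_\infty .
\]
Here no separate support control is needed, since $F(g)=0$ forces $F_i(g)=0$, so $\supp F_i\subseteq\supp F$ for all $i$.

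The main obstacle is the uniform bound displayed above, which I would establish fiberwise as a Hilbert-module Cauchy--Schwarz estimate in $(\mathscr B\otimes_{\max}A)_g$. Writing $F_i(g)=\sum_h \tilde a_i(h)^*F(g)\tilde a_i(g^{-1}h)$ and putting $w_h\coloneqq\tilde a_i(h)^*F(g)$, I view $\mathbf c\mapsto\sum_h w_hc_h$ as an adjointable operator from a finite direct sum of copies of the source fiber into $(\mathscr B\otimes_{\max}A)_g$, whose norm is at most $\|\sum_h w_hw_h^*\|^{1/2}$. Combined with $\sum_h w_hw_h^*\le\|F(g)\|^2\sum_h \tilde a_i(h)^*\tilde a_i(h)$ and the identities $\sum_{h\in G^{r(g)}}\tilde a_i(h)^*\tilde a_i(h)=\langle\tilde a_i,\tilde a_i\rangle(r(g))$ and $\sum_{h\in G^{r(g)}}\tilde a_i(g^{-1}h)^*\tilde a_i(g^{-1}h)=\langle\tilde a_i,\tilde a_i\rangle(s(g))$, this yields the claim. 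Because this computation only uses the imprimitivity-bimodule structure of the fibers provided by Lemma \ref{lem:tensor product is fell bundle}, and because the isometry $b\mapsto b\otimes 1_A$ holds verbatim for the minimal tensor product, the entire argument carries over unchanged to $\mathscr B\otimes A$ once the latter is a Fell bundle, that is, once $C_0(\mathscr B^{(0)})$ is exact.
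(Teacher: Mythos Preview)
Your proposal is correct and follows essentially the same route as the paper: the same sequence $a_i\otimes 1_A$, the same verification of condition \eqref{item:almostinvariant} first on $C_c(\mathscr B)\odot A$, and the same $\tfrac{\varepsilon}{3}$-argument via a fiberwise Cauchy--Schwarz estimate to pass to general sections. The only cosmetic difference is that you package the estimate as a uniform operator bound $\|F_i(g)\|\le\|F(g)\|\,\|\langle\tilde a_i,\tilde a_i\rangle\|$ and then invoke linearity, whereas the paper applies Cauchy--Schwarz directly to the difference $f-\tilde f$; these are the same computation.
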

\begin{proof}
	We only prove the statement about the maximal tensor product. The proof for the minimal tensor product is identical, the exactness assumption is only used in the construction of $\mathscr B\otimes A$. Let $(a_i)_{i\in \N}$ be a sequence in $C_c(G,r^*\mathscr B^{(0)})$ as in Definition \ref{defn:AP}. We claim that the sequence $(a_i\otimes 1)_{i\in \N}$ in $C_c(G,r^*\mathscr B^{(0)}\otimes_{\max} A)$ satisfies both conditions of Definition \ref{defn:AP} for the Fell bundle $\mathscr B\otimes_{\max} A$. 
	
	Condition \eqref{item:uniformly bounded} is clearly satisfied. To check condition \eqref{item:almostinvariant} of Definition \ref{defn:AP}, we first fix an element $f=\sum_{k=1}^n x_k\otimes b_k\in C_c(\mathscr B)\odot A\subseteq C_c(\mathscr B\otimes_{\max} A)$ and define 
		\begin{equation}\label{eq:defn fi}
			f_i(g)\coloneqq \sum_{h\in G^{r(g)}}(a_i\otimes 1)(h)^*f(g)(a_i\otimes 1)(g^{-1}h)
		\end{equation}
	for $g\in G$ and $i\in \N$. Then we have 
		\[\|f(g)-f_i(g)\|\leq \sum_{k=1}^n\underbrace{\left \|x_k(g)-\sum_{h\in G^{r(g)}}a_i(h)^*x_k(g)a_i(g^{-1}h)\right\|}_{\xrightarrow{i\to\infty} 0}\|b_k\|\xrightarrow{i\to\infty}0\]
	where the convergence is uniform in $g\in G$. This shows that we have $f_i\to f$ in the inductive limit topology and verifies condition \eqref{item:almostinvariant} of Definition \ref{defn:AP} for $f\in C_c(\mathscr B)\odot A$. 
	For a general $\tilde f\in C_c(\mathscr B\otimes_{\max} A)$ and $\varepsilon>0$, we pick an element $f\in C_c(\mathscr B)\odot A$ satisfying $\|f(g)-\tilde f(g)\|<\frac \varepsilon 3$ for all $g\in G$. We denote by $\tilde f_i$ the same function as in \eqref{eq:defn fi} with $f(g)$ replaced by $\tilde f(g)$ and write 
		\[C\coloneqq \max\left\lbrace 1,\sup_{i\in \N}\|a_i\|_{L^2(G,r^*\mathscr B^{(0)})}^2\right\rbrace.\]
	 Now let $i\in \N$ be large enough such that $\|f(g)-f_i(g)\|<\frac \varepsilon {3 C}$ for all $g\in G$. Then using the Cauchy-Schwarz-inequality for Hilbert modules, we get 
	\begin{align*}
		\|\tilde f(g)-\tilde f_i(g)\|\leq& \|\tilde f(g)-f(g)\|+\|f(g)-f_i(g)\|+\|f_i(g)-\tilde f_i(g)\| \\
		<&\frac 2 3 \varepsilon+\left\|\sum_{h\in G^{r(g)}}(a_i\otimes 1)^*(h)(f(g)-\tilde f(g))(a_i\otimes 1)(g^{-1}h)\right\| \\
		=&\frac 2 3 \varepsilon + \|\langle (a_i\otimes 1)|_{G^{r(g)}},(f(g)-\tilde f(g))(a_i\otimes 1)|_{G^{s(g)}}\rangle \| \\
		\leq &\frac 2 3 \varepsilon + \|a_i|_{G^{r(g)}}\|_{L^2(G^{r(g)},\mathscr B_{r(g)})} \cdot \|(f(g)-\tilde f(g))\| \cdot \|a_i|_{G^{s(g)}}\|_{L^2(G^{s(g)},\mathscr B_{s(g)})} \\
		\leq &\frac 2 3 \varepsilon + C \frac \varepsilon {3C}\\
		=&\varepsilon. 
	\end{align*}
	This verifies condition \eqref{item:almostinvariant} of Definition \ref{defn:AP} for general $\tilde f\in C_c(\mathscr B\otimes_{\max}A)$. 
\end{proof}

\begin{proof}[Proof of Theorem \ref{thm:APimpliesnuclearity}]
	Let $\mathscr B$ be a Fell bundle with the approximation property such that $C_0(\mathscr B^{(0)})$ is nuclear. Let $A$ be a unital, separable $C^*$-algebra. 
	Since the norms on both $\mathscr B\otimes_{\max}A$ and $\mathscr B\otimes A$ only depend on the unit bundles $\mathscr B^{(0)}\otimes_{\max}A$ and $\mathscr B^{(0)}\otimes A$, and since $\mathscr B^{(0)}$ has nuclear fibers, we have 
		\begin{equation}\label{eq:B max A equal B min A}
			\mathscr B\otimes_{\max}A=\mathscr B\otimes A.
		\end{equation}
		Using all the previous lemmas, we get isomorphisms
			\begin{align*}
				C^*(\mathscr B)\otimes_{\max}A\cong C^*(\mathscr B\otimes_{\max}A)\cong C^*_r(\mathscr B\otimes_{\max}A)\\
				\cong C^*_r(\mathscr B\otimes A)\cong C^*_r(\mathscr B)\otimes A\cong C^*(\mathscr B)\otimes A,
			\end{align*}
	all of which extend the identity on $C_c(\mathscr B)\odot A$. 
	Here we have used Lemma \ref{lem:maximalandmaximalproduct}, Lemma \ref{lem:APandtensorproducts}, \eqref{eq:B max A equal B min A}, Lemma \ref{lem:reducedandminimalproduct}, and Theorem \ref{thm:APimpliesWC} in this order.
	Now Lemma \ref{lem:easynuclearity} implies that $C^*(\mathscr B)$ is nuclear. 
\end{proof}

\section{Amenability for groupoid actions}
Throughout this section, let $G$ be a Hausdorff, second countable, \'etale groupoid.

\subsection{Amenability for $W^*$-dynamical systems}\label{sec:amenable vN}
Let $(G,\mathscr M,\alpha,\mu)$ be a $W^*$-dy\-namical system. We denote by $L^2(G,r^*\mathscr M)$ the space of equivalence classes of measurable sections $\xi\colon G\to r^*\mathscr M$ (with functions identified if they agree $\mu$-almost everywhere) such that 

\begin{enumerate}
	\item For $\mu$-almost all $x\in G^{(0)}$, the series $\sum_{g\in G^x}\xi(g)^*\xi(g)$ converges ultraweakly in $\mathscr M_x$.
	\item The map 
	\[G^{(0)}\ni x\mapsto \sum_{g\in G^x}\xi(g)^*\xi(g)\in \mathscr M_x\]
is $\mu$-essentially bounded.
\end{enumerate} 
Note that $L^2(G,r^*\mathscr M)$ is a Hilbert module over $L^\infty(\mathscr M)$ with the $L^\infty(\mathscr M)$-module structure given by pointwise multiplication and with $L^\infty(\mathscr M)$-valued inner products given by 
	\[\langle \xi,\eta\rangle(x)\coloneqq \sum_{g\in G^x}\xi(g)^*\eta(g),\quad \xi,\eta\in L^2(G,r^*\mathscr M), x\in G^{(0)}.\]
We write $Z(\mathscr M)\coloneqq \bigsqcup_{x\in G^{(0)}}Z(\mathscr M_x)$ and denote by $(G,Z(\mathscr M),\alpha,\mu)$ the corresponding $W^*$-dynamical system.

\begin{defn}\label{defn:measurewise amenable}
	A $W^*$-dynamical system $(G,\mathscr M,\alpha,\mu)$ is called \emph{amenable} if there is a sequence $(a_i)_{i\in \N}$ in $L^2(G,r^*Z(\mathscr M))$ such that 
		\begin{enumerate}
			\item $\sup_{i\in \N}\|\langle a_i,a_i\rangle \|_{L^\infty(Z(\mathscr M))}<\infty$ \label{item:W*almost 1}
			\item The sequence $(\tilde a_i)_{i\in \N}$ in $L^\infty(G,r^*Z(\mathscr M),\mu\circ \lambda)$ given by 
				\[\tilde a_i(g)\coloneqq \sum_{h\in G^{r(g)}}a_i(h)^*\alpha_g(a_i(g^{-1}h)),\quad g\in G\]
				converges to $1$ in the weak-$*$-topology of $L^\infty(G,r^*Z(\mathscr M),\mu\circ\lambda)$. \label{item:W* almost inv}
		\end{enumerate}
	We call a $G$-von Neumann algebra $(M,\alpha,\mu)$ \emph{amenable} if the associated $W^*$-dynamical system $(G,\mathscr M,\alpha,\mu)$ is amenable. 
\end{defn}

\begin{eg}
	Let $\mu$ be a quasi-invariant measure on $G^{(0)}$. Then the trivial $G$-von Neumann algebra $(L^\infty(G^{(0)},\mu),\alpha,\mu)$ is amenable in the sense of Definition \ref{defn:measurewise amenable} if and only if the measured groupoid $(G,\mu)$ is amenable in the sense of \cite{ADRenault2000amenableGroupoids}. 
\end{eg}

\begin{eg}\label{eg:Linfty(G,M)}
	Let $(G,\mathscr M,\alpha,\mu)$ be a $W^*$-dynamical system. Assume that $\mathscr M\subseteq \End(\mathscr H)$ and that $\alpha=\Ad(U)$ for a unitary representation $U\colon G\to \Iso(\mathscr H)$ as in Remark \ref{rem:standard form}. We construct a new $W^*$-dynamical system $(G,\tilde {\mathscr M},\tilde \alpha,\mu)$ as follows. 
	Let $(\xi_n)_{n\in \N}$ be a fundamental sequence for $\mathscr H$ and let $(f_m)_{m\in \N}$ be a sequence in $C_c(G)$ that is dense in the inductive limit topology. Using Theorem \ref{thm:construct Hilbert bundle}, we turn
	 $\tilde {\mathscr H}\coloneqq \bigsqcup_{x\in G^{(0)}}\ell^2(G^x, \mathscr H_x)$ into a Hilbert bundle over $G^{(0)}$ by declaring $(f_m\otimes \xi_n)_{n,m\in \N}$ a fundamental sequence. We define a unitary representation 
	 \[\tilde U\colon G\to \Iso(\tilde{\mathscr H}),\quad \tilde U_g(\xi)(h)\coloneqq U_g(\xi(g^{-1}h))\]
	 for  $g\in G, \xi\in \ell^2(G^{s(g)},\mathscr H_{s(g)}),$ and $h\in G^{r(g)}$.
	 Write 
	 	\[\tilde {\mathscr M}\coloneqq \bigsqcup_{x\in G^{(0)}} \ell^\infty(G^x,\mathscr M_x)\subseteq \End(\tilde {\mathscr H})\]
	 	and $\tilde \alpha\coloneqq \Ad(\tilde U)$.
	 One checks that $(G,\tilde{\mathscr M},\tilde \alpha,\mu)$ is a $W^*$-dynamical system satisfying
	 	\begin{equation}\label{eq:L(G,M)}
	 		L^\infty(\tilde{ \mathscr M})= L^\infty(G,r^*\mathscr M,\mu\circ \lambda).
	 	\end{equation}
	 Then $(G,\tilde {\mathscr M},\tilde \alpha,\mu)$ is amenable by the same arguments as in \cite[Ex. 2.1.4(1)]{ADRenault2000amenableGroupoids}.
\end{eg}

Before stating Theorem \ref{thm:amenability for vN algebras} below, let us specify what we mean by \emph{equivariant maps} between $G$-von Neumann algebras. 

\begin{defn}\label{defn:equivariant maps}
	Let $(G,\mathscr M,\alpha,\mu)$ be a $W^*$-dynamical system.
	\begin{enumerate}
		\item For $f\in C_c(G)$ and $m\in L^\infty(\mathscr M)$, define $f*m\in L^\infty(\mathscr M)$ by 
			\[f*m(x)\coloneqq \sum_{g\in G^x}f(g)\alpha_{g}(m(s(g))),\quad x\in G^{(0)}.\]
		\item Let $(G,\mathscr N,\beta,\mu)$ be another $W^*$-dynamical system. We call a linear map $\phi\colon L^\infty(\mathscr M)\to L^\infty(\mathscr N)$ \emph{$G$-equivariant}, if we have 
			\[\phi(f*m)=f*\phi(m),\quad \forall m\in L^\infty(\mathscr M), f\in C_c(G).\]
	\end{enumerate}
\end{defn}

For a $W^*$-dynamical system $(G,\mathscr M,\alpha,\mu)$, we denote by $L^1(G,r^*Z(\mathscr M))^+_1$ the space of measurable sections $f\colon G\to r^*Z(\mathscr M)$ such that $f(g)$ is positive for every $g\in G$ and such that $\sum_{g\in G^x}f(g)\leq 1$ for $\mu$-almost every $x\in G^{(0)}$. 

The following theorem is a generalization of \cite[Thm. 3.3]{anantharaman1987amenableC} (see also \cite[Thm. 3.6]{BeardenCrann}).
\begin{thm}\label{thm:amenability for vN algebras}
	Let $(G,\mathscr M,\alpha,\mu)$ be a $W^*$-dynamical system. The following are equivalent:
\begin{enumerate}
		\item There is a sequence of functions $(f_i)_{i\in \N}$ in $L^1(G,r^*Z(\mathscr M))^+_1$ such that \label{item:f_ifunctions}
			\begin{enumerate}
				\item The functions 
					\[x\mapsto \sum_{g\in G^x}f_i(g)\]
				converge to $1\in L^\infty(Z(\mathscr M))$ ultraweakly.
				\item For every $f\in C_c(G)$ and $\omega\in L^\infty(Z(\mathscr M))_*$, we have
					\[\int_{G^{(0)}}\left|\omega_x\left(\sum_{g,h\in G^x}f(h)\left(f_i(g)-\alpha_h(f_i(h^{-1}g))\right)\right)\right|d\mu(x)\xrightarrow{i\to \infty} 0.\]
					
			\end{enumerate}
		\item $(G,\mathscr M,\alpha,\mu)$ is amenable. \label{item:wAP}
		\item There is a $G$-equivariant conditional expectation 
			\[L^\infty(G,r^*\mathscr M,\mu\circ\lambda)\to L^\infty(\mathscr M).\] \label{item:measureamenable}
		\item There is a $G$-equivariant conditional expectation 
			\[L^\infty(G,r^*Z(\mathscr M),\mu\circ\lambda)\to L^\infty(Z(\mathscr M)).\]
			 \label{item:commmeasureamenable}
	\end{enumerate}
\end{thm}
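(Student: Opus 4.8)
The plan is to establish the cycle of implications $(1)\Rightarrow(2)\Rightarrow(3)\Rightarrow(4)\Rightarrow(1)$. Three of these links are soft (a square-root trick, an averaging/weak-$*$-limit construction, and a center-to-center argument), while the final passage $(4)\Rightarrow(1)$ — reconstructing an almost-invariant sequence from a bare conditional expectation — is the genuine analytic core. Throughout I will use that the von Neumann algebra $L^\infty(G,r^*\mathscr M,\mu\circ\lambda)$ of Example \ref{eg:Linfty(G,M)} carries the \emph{pointwise} product, so that its center is exactly $L^\infty(G,r^*Z(\mathscr M))$, and similarly $Z(L^\infty(\mathscr M))=L^\infty(Z(\mathscr M))$.

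For $(1)\Rightarrow(2)$ I would pass to square roots: given $(f_i)$ as in $(1)$, set $a_i(g)\coloneqq f_i(g)^{1/2}$, the positive square root inside $Z(\mathscr M_{r(g)})$. Then $\langle a_i,a_i\rangle=\sum_{g\in G^x}f_i(g)\le 1$, giving the boundedness condition of Definition \ref{defn:measurewise amenable}. For the almost-invariance I would compare $\tilde a_i(g)=\sum_{h}f_i(h)^{1/2}\alpha_g(f_i(g^{-1}h)^{1/2})$ with $\sum_h\alpha_g(f_i(g^{-1}h))\to 1$, bounding the difference by Cauchy--Schwarz over $G^{r(g)}$ together with the operator Powers--Størmer inequality $\sum_h\|f_i(h)^{1/2}-\alpha_g(f_i(g^{-1}h))^{1/2}\|^2\le\sum_h\|f_i(h)-\alpha_g(f_i(g^{-1}h))\|_1$; the right-hand side is precisely what condition $(1)(b)$ controls after integration against normal functionals. (The converse $(2)\Rightarrow(1)$ is the easier Cauchy--Schwarz estimate applied to $f_i\coloneqq a_i^*a_i$.) For $(2)\Rightarrow(3)$ I would average: using centrality of the $a_i(g)$, define normal completely positive contractions
\[
\Phi_i\colon L^\infty(G,r^*\mathscr M,\mu\circ\lambda)\to L^\infty(\mathscr M),\qquad \Phi_i(F)(x)\coloneqq\sum_{g\in G^x}a_i(g)^*F(g)\,a_i(g)=\sum_{g\in G^x}f_i(g)F(g).
\]
On the embedded copy $m\mapsto m\circ r$ of $L^\infty(\mathscr M)$ one gets $\Phi_i(m\circ r)=\big(\sum_{g\in G^x}f_i(g)\big)m\to m$ ultraweakly, so any point-ultraweak limit point $\Phi$ (the ball of normal c.p.\ maps is compact here by Banach--Alaoglu and separability) is a unital c.p.\ projection onto $L^\infty(\mathscr M)$, hence a conditional expectation by Tomiyama's theorem. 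Its $G$-equivariance in the sense of Definition \ref{defn:equivariant maps} is where almost-invariance is spent: a direct computation gives $\Phi_i(f*F)(x)-f*\Phi_i(F)(x)=\sum_{g,k\in G^x}f(g)\,\alpha_g(F(g^{-1}k))\,\big(f_i(k)-\alpha_g(f_i(g^{-1}k))\big)$, which tends to $0$ against normal functionals by $(1)(b)$. Running the same construction with $F$ restricted to $L^\infty(G,r^*Z(\mathscr M))$ yields $(1)\Rightarrow(4)$ at no extra cost.

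The link $(3)\Rightarrow(4)$ is formal. A conditional expectation $E$ is a bimodule map over its range, so for a central element $z$ and any $m\in L^\infty(\mathscr M)$ one has $E(z)m=E(zm)=E(mz)=mE(z)$; hence $E$ maps the center $L^\infty(G,r^*Z(\mathscr M))$ into $Z(L^\infty(\mathscr M))=L^\infty(Z(\mathscr M))$, and its restriction is the required equivariant conditional expectation onto the center.

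Finally, $(4)\Rightarrow(1)$ is the main obstacle. Each $f\in L^1(G,r^*Z(\mathscr M))^+_1$ with $\sum_g f(g)=1$ defines a normal, $L^\infty(Z(\mathscr M))$-bilinear, unital c.p.\ map $\Psi_f(F)(x)=\sum_{g\in G^x}f(g)F(g)$, and I would first show that these $\Psi_f$ are point-ultraweakly dense in the convex, compact set of all normal conditional expectations $L^\infty(G,r^*Z(\mathscr M))\to L^\infty(Z(\mathscr M))$ — a disintegration statement proved by approximating the associated conditional-probability kernels by finitely supported ones. Thus the given $E_Z$ lies in the weak-$*$ closure of the convex set $\{\Psi_f\}$, and its equivariance says $E_Z$ is an \emph{invariant} point of that closure, so for each $f\in C_c(G)$ the translate-differences of the $\Psi_{f}$ tend to $0$ weakly. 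A Day--Namioka convexity argument (the weak and norm closures of a convex set coincide) then upgrades this to a net, and by second countability to a sequence $(f_i)$, for which the translate-differences converge to $0$ in the integrated $L^1$-norm of $(1)(b)$, while normalization supplies $(1)(a)$. I expect the hard part to be exactly this upgrade: realizing $E_Z$ as a genuine limit of kernels $\Psi_f$, then converting weak asymptotic invariance into the $L^1$-norm statement \emph{uniformly} over all test functions $f\in C_c(G)$ and normal functionals $\omega$, all while descending from nets to sequences.
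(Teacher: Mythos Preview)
Your proof proposal is in the right spirit, but the paper takes a substantially different and more economical route. The key device in the paper is Lemma~\ref{lem:commutative G von Neumann algebra}: it identifies the commutative $G$-von Neumann algebra $L^\infty(Z(\mathscr M))$ equivariantly with $L^\infty(X,\rho)$ for some second countable locally compact $G$-space $X$ with quasi-invariant measure $\rho$. Through this identification, conditions (1), (2), and (4) become verbatim the standard characterizations of amenability for the measured transformation groupoid $(X\rtimes G,\rho)$, and the equivalences $(1)\Leftrightarrow(4)$ and $(2)\Leftrightarrow(4)$ are then quoted from \cite[Prop.~3.1.8, Prop.~3.1.25]{ADRenault2000amenableGroupoids}. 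Only $(1)\Rightarrow(3)$ (your averaging construction, essentially identical to yours) and the trivial restriction $(3)\Rightarrow(4)$ are done by hand.

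Your direct cycle is viable, and your $(2)\Rightarrow(3)$ and $(3)\Rightarrow(4)$ match the paper. But two points deserve comment. First, your $(1)\Rightarrow(2)$ via Powers--St{\o}rmer is not correct as written: condition $(1)(b)$ only controls $\int_{G^{(0)}}\bigl|\omega_x\bigl(\sum_{g,h}f(h)(f_i(g)-\alpha_h(f_i(h^{-1}g)))\bigr)\bigr|\,d\mu$, the absolute value of a \emph{signed} sum, whereas the square-root estimate needs control of $\sum_g|f_i(g)-\alpha_h(f_i(h^{-1}g))|$ in an $L^1$-sense. Bridging this gap requires an additional convexity upgrade (of the Day--Namioka type), which in the ADR treatment is precisely what is packaged into the equivalence you cite. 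Second, your $(4)\Rightarrow(1)$ sketch is the ADR argument in outline and is correct in spirit, but the density of the kernel maps $\Psi_f$ among normal conditional expectations and the passage from weak to norm asymptotic invariance both lean heavily on the commutative structure; this is exactly where the paper's reduction via Lemma~\ref{lem:commutative G von Neumann algebra} pays off, by placing you squarely in the groupoid setting where these steps are already recorded in the literature.
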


For the proof, we need the following Lemma:
\begin{lem}\label{lem:commutative G von Neumann algebra}
	Let $(G,\mathscr M,\alpha,\mu)$ be a $W^*$-dynamical system such that $L^\infty(\mathscr M)$ is commutative. Then there exists a second countable locally compact $G$-space $X$, a quasi-invariant measure $\rho$ on $X$ and a $G$-equivariant $*$-isomorphism $L^\infty(\mathscr M)\cong L^\infty(X,\rho)$.
\end{lem}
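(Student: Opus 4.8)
The plan is to recover a Borel $G$-space from the commutative $W^*$-dynamical system and then to topologize it. Since $G$ is second countable and the Hilbert bundle $\mathscr H$ carries a fundamental sequence, $L^2(G^{(0)},\mathscr H,\mu)$ is separable, so $L^\infty(\mathscr M)$ is a commutative von Neumann algebra with separable predual and is therefore $*$-isomorphic to $L^\infty(Z,\eta)$ for a standard probability space $(Z,\eta)$. The diagonal operators form a von Neumann subalgebra $L^\infty(G^{(0)},\mu)\subseteq L^\infty(\mathscr M)$, and I would choose the isomorphism so that this subalgebra corresponds to the functions pulled back along a Borel map $\pi\colon Z\to G^{(0)}$ with $\pi_*\eta\sim\mu$; disintegrating $\eta=\int_{G^{(0)}}\eta_x\,d\mu(x)$ over $\pi$ then yields fibrewise identifications $\mathscr M_x\cong L^\infty(\pi^{-1}(x),\eta_x)$ compatible with the direct integral decomposition.

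Next I would transport the action to $Z$. For each $g\in G$ the isomorphism $\alpha_g\colon\mathscr M_{s(g)}\to\mathscr M_{r(g)}$ becomes a $*$-isomorphism $L^\infty(\pi^{-1}(s(g)))\to L^\infty(\pi^{-1}(r(g)))$ of commutative von Neumann algebras, hence, by the point realization of measure-algebra isomorphisms, is implemented by a nonsingular Borel isomorphism of the fibres. Using the measurability of the map \eqref{eq:measurable action} from Definition \ref{defn:W*dyn} together with a measurable selection argument, I would assemble these fibre maps into a jointly Borel action of $G$ on $Z$ over $\pi$. The cocycle identity $\alpha_{gh}=\alpha_g\circ\alpha_h$ holds off a $\mu\circ\lambda$-null set, so after deleting a $G$-invariant $\mu$-null Borel subset of $G^{(0)}$ I obtain a genuine Borel $G$-space $(Z,\eta)$; quasi-invariance of $\eta$ for the transformation groupoid $Z\rtimes G$ follows from quasi-invariance of $\mu$ and nonsingularity of the fibre maps. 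By construction the identification $L^\infty(\mathscr M)\cong L^\infty(Z,\eta)$ intertwines each $\alpha_g$ with the induced action on functions and is therefore $G$-equivariant in the sense of Definition \ref{defn:equivariant maps}.

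The remaining, and main, difficulty is to topologize. Here I would invoke the standard passage from a Borel $G$-space with quasi-invariant measure to a locally compact model: as in Ramsay's topologization of measured groupoids and the framework of \cite{ADRenault2000amenableGroupoids}, the transformation groupoid $Z\rtimes G$ admits, after discarding a conull set, a second countable locally compact topology over $G$ under which it is the transformation groupoid of a second countable locally compact Hausdorff $G$-space $X$; transporting $\eta$ gives a quasi-invariant measure $\rho$ and a $G$-equivariant Borel isomorphism between conull invariant subsets of $X$ and $Z$. Composing the isomorphisms furnishes the desired $G$-equivariant $*$-isomorphism $L^\infty(\mathscr M)\cong L^\infty(Z,\eta)\cong L^\infty(X,\rho)$. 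I expect the two genuinely delicate points to be the measurable selection that produces a jointly Borel, essentially everywhere defined $G$-action in the second paragraph, and the topologization step, which is precisely where second countability and standardness of the Borel structures are essential.
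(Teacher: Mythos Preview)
Your approach is genuinely different from the paper's, and the two ``delicate points'' you flag are exactly where it becomes hard to pin down. The paper sidesteps both by working $C^*$-algebraically from the start: it fixes a separable, ultraweakly dense $C^*$-subalgebra $A\subseteq L^\infty(\mathscr M)$ that is closed under $f*(-)$ for all $f\in C_c(G)$, so $A$ is automatically a $C_0(G^{(0)})$-algebra and its Gelfand spectrum $X$ is already second countable and locally compact. The continuous $G$-action on $X$ is then built by hand from the \'etale structure (for $g\in G$ and $a\in A_{s(g)}$, set $\beta_g(a)\coloneqq (f*\tilde a)(r(g))$ with $f$ supported on a bisection and identically $1$ near $g$), and the quasi-invariant measure $\rho$ is read off by disintegrating a faithful normal representation of $L^\infty(\mathscr M)\overline\rtimes G$ restricted to $C_0(X)\rtimes_r G$. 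No measurable selection and no after-the-fact topologization are needed.

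The real gap in your route is the topologization step. Ramsay's theorem gives a locally compact model for the measured groupoid $Z\rtimes G$, but it does not say that this model is again a transformation groupoid of the form $X\rtimes G$ for the \emph{same} topological groupoid $G$; you would need a relative version that preserves the projection to $G$ and produces a continuous $G$-space, and this is not what the cited references provide. The paper's approach avoids the issue entirely by building the topology first, and it is precisely here that \'etaleness is used---the paper itself notes in its final section that it cannot prove this lemma without the \'etale hypothesis. Your measure-theoretic path may be completable, but as written the appeal to Ramsay and \cite{ADRenault2000amenableGroupoids} is being asked to do more than those results deliver.
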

\begin{proof}[Proof of Lemma \ref{lem:commutative G von Neumann algebra}]
	Let $A\subseteq L^\infty(\mathscr M)$ be a separable ultraweakly dense $C^*$-sub\-al\-ge\-bra satisfying $f*a\in A$ for all $f\in C_c(G)$ and $a\in A$. In particular, we have $f*a\in A$ for every $f\in C_c(G^{(0)})$, so $A$ is a $C_0(G^{(0)})$-algebra. 
	
	For $g\in G$ and $a\in A_{s(g)}$, choose a function $f\in C_c(G)$ supported on an open bisection satisfying $f\equiv 1$ on a neighbourhood of $g$, and a representative $\tilde a\in A$ of $a$. The same arguments as in \cite[Thm. 2.3]{ExactGroupoids} show that $\beta_g(a)\coloneqq (f*\tilde a)(r(g))\in A_{r(g)}$ does not depend on the choices of $f$ and $\tilde a$ and that $g\mapsto \beta_g$ defines an action of $G$ on $A$. Dually, we get an action of $G$ on the Gelfand spectrum $X$ of $A$.
	
	Now fix a faithful normal representation $\Pi\colon L^\infty(\mathscr M)\overline \rtimes G\hookrightarrow \mathcal L(H)$. Use Lemma \ref{lem:williams} to identify $C_0(X)\rtimes_r G$ as a subalgebra of $L^\infty(\mathscr M)\overline \rtimes G$ and write $\pi\coloneqq \Pi|_{C_0(X)}$. 	
	By Propositions 8.7 and 8.19 of \cite{williamsgroupoidbook}, there is a quasi-invariant measure $\rho$ on $X$, a Hilbert bundle $\mathscr H$ on $X$, and an isomorphism $H\cong L^2(X,\mathscr H,\rho)$ that intertwines $\pi$ with the representation of $C_0(X)$ by diagonal operators. 
In particular, the inclusion $C_0(X)\subseteq L^\infty(\mathscr M)$ extends to an isomorphism
		\begin{equation}\label{eq:equiv.iso}
			\overline \pi\colon L^\infty(X,\rho)\cong L^\infty(\mathscr M).
		\end{equation}
	By construction, $\overline \pi$ is equivariant.
\end{proof}

\begin{proof}[Proof of Theorem \ref{thm:amenability for vN algebras}]
	The implication \eqref{item:measureamenable} $\Rightarrow$ \eqref{item:commmeasureamenable} follows from restricting to the center. 
	In view of Lemma \ref{lem:commutative G von Neumann algebra}, the equivalence \eqref{item:commmeasureamenable} $\Leftrightarrow$ \eqref{item:f_ifunctions} is \cite[Prop. 3.1.8]{ADRenault2000amenableGroupoids} (see also \cite[Prop. 10.14]{williamsgroupoidbook}). Similarly, the equivalence \eqref{item:wAP} $\Leftrightarrow$ \eqref{item:commmeasureamenable} is \cite[Prop. 3.1.25]{ADRenault2000amenableGroupoids} (see also \cite[Prop. 10.38]{williamsgroupoidbook}). 
	
	We prove \eqref{item:f_ifunctions} $\Rightarrow$ \eqref{item:measureamenable}, following the proof of \cite[Thm. 3.3]{anantharaman1987amenableC}. For a sequence $(f_i)_{i\in \N}$ as in \eqref{item:f_ifunctions}, we define functions 
		\[P_{f_i}:L^\infty(G,r^*\mathscr M,\mu\circ\lambda)\to L^\infty(\mathscr M),\quad P_{f_i}(\varphi)(x)\coloneqq \sum_{g\in G^x}f_i(x)\varphi(x).\]
	By centrality of the $f_i$, each $P_{f_i}$ is an $L^\infty(\mathscr M)$-bimodule map. Since each $f_i$ belongs to $L^1(G,r^*Z(\mathscr M))_1^+$, each $P_{f_i}$ is completely positive and contractive. By passing to a subnet, we can assume that the $P_{f_i}$ converge to a completely positive contractive $L^\infty(\mathscr M)$-bimodule map $P$ in the topology of pointwise ultraweak convergence. The first condition of \eqref{item:f_ifunctions} implies that $P$ is unital. Thus $P$ is a conditional expectation by $L^\infty(\mathscr M)$-linearity. We claim that $P$ is equivariant. Indeed, let $f\in C_c(G)$ and $\omega\in L^\infty(\mathscr M)_*$ and $\varphi\in L^\infty(G,r^*\mathscr M)$. A straightforward calculation shows that 
		\begin{align*}
		&|\omega(P_{f_i}(f*\varphi)-f*P_{f_i}(\varphi))| \\
		=&\left|\int_{G^{(0)}}\omega_x\left(\sum_{g,h\in G^x}f(h)\alpha_h\left(\varphi(h^{-1}g)\right)\left(f_i(g)-\alpha_h(f_i(h^{-1}g))\right)\right)d\mu(x)\right|\\
		\leq& \|\varphi\|_{\infty} \int_{G^{(0)}}\left|\omega_x\left(\sum_{g,h\in G^x}f(h)\left(f_i(g)-\alpha_h(f_i(h^{-1}g))\right)\right)\right|d\mu(x)\\
		&\xrightarrow{i\to \infty}0.
		\end{align*}
	Since $P_{f_i}\xrightarrow{i\to \infty} P$ pointwise ultraweakly, this implies $P(f*\varphi)=f*P(\varphi)$ as desired. 
\end{proof}

\begin{lem}\label{lem:equivariant E}
	Let $(G,\mathscr M,\alpha,\mu)$ be a $W^*$-dynamical system. Let $U\subseteq G$ be an open bisection and denote by $v_U\in L^\infty(\mathscr M)\overline \rtimes G$ the partial isometry given by the characteristic function of $U$. Denote by $L^\infty(\mathscr M)\xrightarrow{\iota}L^\infty(\mathscr M)\overline \rtimes G\xrightarrow{E}L^\infty(\mathscr M)$ the canonical inclusion and conditional expectation. 
	\begin{enumerate}
		\item For every $\varphi\in C_c(U)$ and $f\in L^\infty(\mathscr M)$, we have 
			\begin{equation*}
				\iota(\varphi * f) = \iota(\varphi\circ r|_U^{-1})\cdot v_U \iota(f) v_U^*.
			\end{equation*}\label{item: action by conjugation}
		\item For every $x\in L^\infty(\mathscr M)\overline \rtimes G$, we have 
			\begin{equation*}
				\iota\circ E(v_Uxv_U^*)=v_U(\iota\circ E(x))v_U^*.
			\end{equation*}\label{item: equivariant E}
	\end{enumerate}
\end{lem}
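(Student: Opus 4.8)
The plan is to carry out every computation inside the concrete crossed product acting on $\mathcal H\coloneqq L^2(G,r^*\mathscr H,\nu^{-1})$, where $\nu=\mu\circ\lambda$ and, as in Remark~\ref{rem:standard form}, $\alpha=\Ad(W)$ for a unitary representation $W\colon G\to\Iso(\mathscr H)$. Here the partial isometry is $v_U=\varphi_\alpha(1_U)$, where $1_U(g)\coloneqq 1_{\mathscr M_{r(g)}}$ for $g\in U$ and $1_U(g)\coloneqq 0$ otherwise; since $U$ is a bisection one has $\|1_U\|_I\leq 1$, so $1_U\in S(r^*\mathscr M)$, and moreover $v_U^*=\varphi_\alpha(1_{U^{-1}})$ while $\iota(m)=\varphi_\alpha(\tilde m)$. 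As $\varphi_\alpha$ is a $*$-representation, products of such elements correspond to convolutions in $S(r^*\mathscr M)$. The recurring simplification is that, $U$ being a bisection, each sum over $G^{r(g)}$ occurring in the convolution $1_U*\tilde f*1_{U^{-1}}$ collapses to a single term; combined with $\alpha_x=\id$ for units $x$, a direct computation yields $v_U\iota(f)v_U^*=\iota(\theta_U(f))$, where $\theta_U$ is the partial automorphism $\theta_U(f)(x)=\alpha_{r|_U^{-1}(x)}\bigl(f(s(r|_U^{-1}(x)))\bigr)$ for $x\in r(U)$ and $\theta_U(f)(x)=0$ otherwise. Multiplying on the left by $\iota(\varphi\circ r|_U^{-1})$ and evaluating at $x=r(g)$ for $g=r|_U^{-1}(x)\in U$, one checks $(\varphi\circ r|_U^{-1})\cdot\theta_U(f)=\varphi*f$ pointwise, which is exactly claim~\eqref{item: action by conjugation}.

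For claim~\eqref{item: equivariant E} I would invoke the description of $E$ from the construction preceding this lemma, namely $E=\Ad(V^*)$ for the isometry $V\colon L^2(G^{(0)},\mathscr H,\mu)\to\mathcal H$; note that $\iota(m)$ acts on $\mathcal H$ as multiplication by $m\circ r$, so that $V^*\iota(m)V=m$ for every $m\in L^\infty(\mathscr M)$ and hence $V^*(\cdot)V$ is injective on $\iota(L^\infty(\mathscr M))$. Applying $V^*(\cdot)V$ to both sides of the desired identity—both of which already lie in $\iota(L^\infty(\mathscr M))$—reduces \eqref{item: equivariant E} to the assertion that $V^*v_UTv_U^*V$ depends only on $E(T)$, that is, that $V^*v_U\bigl(T-\iota\circ E(T)\bigr)v_U^*V=0$. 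Since $T\mapsto V^*v_UTv_U^*V$ and $T\mapsto V^*v_U\iota\circ E(T)v_U^*V$ are both normal and $\varphi_\alpha(S(r^*\mathscr M))$ is ultraweakly dense, it suffices to treat $T=\varphi_\alpha(F)$.

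For such $T$ the key computation is that $v_U^*V\eta$ is supported on the bisection $U^{-1}$, and that after applying $T$ and reading off the $U^{-1}$-component via $V^*v_U$, the surviving sum over $h\in G^{s(u)}$ (with $u=r|_U^{-1}(x)$) is constrained by $h^{-1}u^{-1}\in U^{-1}$; since $s|_{U^{-1}}$ is injective this forces $h=s(u)\in G^{(0)}$, so that only the value $F(s(u))=E(T)(s(u))$ contributes and one obtains $V^*v_UTv_U^*V=\theta_U(E(T))$, depending only on $E(T)$. This establishes \eqref{item: equivariant E}. The main obstacle is purely the bookkeeping in these collapsed convolutions and compressions—tracking which single term survives each sum and correctly using $\alpha_x=\id$ on units together with the injectivity of $r|_U$ and $s|_U$; the one genuine idea is that sandwiching by a bisection partial isometry and then compressing onto the units forces the surviving groupoid element to be a unit, so that the off-diagonal part of $T$ cannot contribute.
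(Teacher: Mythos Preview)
Your proof is correct and follows essentially the same approach as the paper. For part~\eqref{item: action by conjugation} both arguments compute $v_U\iota(f)v_U^*$ via convolution in $S(r^*\mathscr M)$ and use the bisection property to collapse the sums; your naming of the partial automorphism $\theta_U$ just packages the same computation. For part~\eqref{item: equivariant E} there is a minor variation: the paper reduces by normality to $x\in S(r^*\mathscr M)$ and then computes $v_Uxv_U^*(g)$ directly as a double convolution in $S(r^*\mathscr M)$, restricting to $g\in G^{(0)}$ to read off $E$; you instead realize $E$ as $\Ad(V^*)$ and track the vector $V^*v_U\varphi_\alpha(F)v_U^*V\eta$ at the Hilbert-space level. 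Either way the crux is the same observation---conjugating by $v_U$ and then restricting to units forces the only surviving summand to come from $h\in G^{(0)}$, so that the value depends only on $F|_{G^{(0)}}=E(\varphi_\alpha(F))$. Your route is marginally more conceptual (it explains \emph{why} the off-diagonal part of $T$ disappears, via the support constraint $v_U^*V\eta\subseteq U^{-1}$), while the paper's is slightly shorter because it never leaves $S(r^*\mathscr M)$.
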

\begin{proof}
	For the first part of the Lemma view $\iota(\varphi*f)$ as an element in $S(r^*\mathscr M)$. We have
		\[\iota(\varphi * f)(g)=\begin{cases}	0,&g\notin r(U)\\ \varphi(h)\alpha_h(f(s(h))),&U\cap r^{-1}(\{g\})=\{h\} \end{cases}.\]
	Also viewing $v_U\iota(f)v_U^*$ as an element in $S(r^*\mathscr M)$, we have
		\begin{align*}
		v_U \iota(f)v_U^*(g)=&\sum_{h\in G^{r(g)}}(v_U\iota(f))(h)\alpha_g(1_U(g^{-1}h))\\
		=&\sum_{h,k\in G^{r(g)}}1_U(k)\alpha_k(f(k^{-1}h))\alpha_g(1_U(g^{-1}h))\\
		=&\begin{cases} 0, &g\notin r(U)\\ \alpha_h(f(s(h))),&U\cap r^{-1}(\{g\})=\{h\}\end{cases}.
		\end{align*}
		This proves the first part of the Lemma.	
	For the second part, we may assume $x\in S(r^*\mathscr M)$ by normality of $\iota$ and $E$. For $g\in G$, we have  
		\begin{equation}\label{eq:conjugate with U}
			v_Uxv_U^*(g)=\sum_{h,k\in G^{r(g)}}1_U(k)\alpha_k(x(k^{-1}h))\alpha_g(1_U(g^{-1}h)).
		\end{equation}
	From this it follows that 
		\[\iota\circ E(v_Uxv_U^*)(g)=\begin{cases} 0, &g\notin r(U)\\\alpha_h(x(s(h))),&U\cap r^{-1}(\{g\})=\{h\} \end{cases}.\]
	If we replace $x$ by $\iota\circ E(x)$ in \eqref{eq:conjugate with U}, only the terms with $h=k\in U$ remain and we obtain  
		\[v_U (\iota\circ E(x))v_U^*(g)=\begin{cases} 0, &g\notin r(U) \\ \alpha_h(x(s(h)),&U\cap r^{-1}(\{g\})=\{h\} \end{cases}.\]
\end{proof}

\begin{prop}\label{prop:injectivity of crossed products}
	Let $(G,\mathscr M,\alpha,\mu)$ be a $W^*$-dynamical system. If $L^\infty(\mathscr M)\overline{\rtimes}G$ is injective then $L^\infty(\mathscr M)$ is injective and $(G,\mathscr M,\alpha,\mu)$ is amenable.
\end{prop}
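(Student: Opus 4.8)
The plan is to treat the two conclusions separately: injectivity of $L^\infty(\mathscr M)$ follows directly from the conditional expectation $E$, while amenability will be obtained by producing the $G$-equivariant conditional expectation required in condition \eqref{item:measureamenable} of Theorem \ref{thm:amenability for vN algebras}. Throughout I fix the faithful normal representation of $L^\infty(\mathscr M)\overline{\rtimes}G$ on $K\coloneqq L^2(G,r^*\mathscr H,\nu^{-1})$, $\nu\coloneqq\mu\circ\lambda$, used to define the crossed product, and I use injectivity to choose a norm-one projection $\Phi\colon \mathcal L(K)\to L^\infty(\mathscr M)\overline{\rtimes}G$. Since $\iota(L^\infty(\mathscr M))$ is the range of the normal conditional expectation $E$, the composite $\iota\circ E\circ\Phi$ is a norm-one projection of $\mathcal L(K)$ onto $\iota(L^\infty(\mathscr M))$; this already shows that $L^\infty(\mathscr M)$ is injective.

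For amenability, recall from Example \ref{eg:Linfty(G,M)} that $L^\infty(G,r^*\mathscr M,\nu)=L^\infty(\tilde{\mathscr M})$, and let $j\colon L^\infty(G,r^*\mathscr M,\nu)\to \mathcal L(K)$ be its realization by multiplication operators on $K$. A computation as in Example \ref{eg:Linfty(G,M)} identifies $\iota(L^\infty(\mathscr M))$ with the image under $j$ of the sections that factor through $r$, i.e. $j(m\circ r)=\iota(m)$. I then set
	\[\Theta\coloneqq \iota^{-1}\circ E\circ\Phi\circ j\colon L^\infty(G,r^*\mathscr M,\nu)\to L^\infty(\mathscr M).\]
Since $\Phi$ and $E$ fix $\iota(L^\infty(\mathscr M))$ pointwise, $\Theta$ restricts to the identity on the copy $\{m\circ r\}$ of $L^\infty(\mathscr M)$; being a composite of unital completely positive maps, $\Theta$ is a (generally non-normal) conditional expectation onto $L^\infty(\mathscr M)$, which is all that condition \eqref{item:measureamenable} requires.

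The core of the argument is the $G$-equivariance of $\Theta$ in the sense of Definition \ref{defn:equivariant maps}. By a partition-of-unity argument it suffices to prove $\Theta(\varphi*\tilde m)=\varphi*\Theta(\tilde m)$ for $\varphi\in C_c(U)$ with $U\subseteq G$ an open bisection. The mechanism is that conjugation by the partial isometry $v_U\in L^\infty(\mathscr M)\overline{\rtimes}G$, twisted by the central multiplier $\iota(\varphi\circ r|_U^{-1})$, implements the bisection part of the action on \emph{both} algebras: on $\iota(L^\infty(\mathscr M))$ this is exactly Lemma \ref{lem:equivariant E}\eqref{item: action by conjugation}, and on the multiplication algebra I expect the parallel identity
	\[j(\varphi*\tilde m)=\iota(\varphi\circ r|_U^{-1})\, v_U\, j(\tilde m)\, v_U^*.\]
Granting this, equivariance is formal: $\Phi$ is a bimodule map over $L^\infty(\mathscr M)\overline{\rtimes}G$, hence commutes with left/right multiplication by $\iota(\varphi\circ r|_U^{-1})v_U$ and $v_U^*$; applying then $E$ and using Lemma \ref{lem:equivariant E}\eqref{item: equivariant E} pulls $v_U$ past $E$; and a final application of Lemma \ref{lem:equivariant E}\eqref{item: action by conjugation} to the element $\Theta(\tilde m)$ reassembles the right-hand side as $\iota(\varphi*\Theta(\tilde m))$, giving $\Theta(\varphi*\tilde m)=\varphi*\Theta(\tilde m)$.

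The main obstacle is establishing the displayed multiplication-algebra identity, namely that conjugation by $v_U=\varphi_\alpha(1_U)$ sends $j(\tilde m)$ to multiplication by $h\mapsto \alpha_p(\tilde m(p^{-1}h))$, where $p=r|_U^{-1}(r(h))$ is the unique element of $U$ over $r(h)$. This is a direct but delicate computation of $v_U\,j(\tilde m)\,v_U^*$ on $K$, parallel to the one in the proof of Lemma \ref{lem:equivariant E}, and it requires keeping careful track of $v_U^*$ and of the modular function $\Delta$ relating $\nu$ and $\nu^{-1}$. Once equivariance is secured, Theorem \ref{thm:amenability for vN algebras} (the implication \eqref{item:measureamenable}$\Rightarrow$\eqref{item:wAP}) shows that $(G,\mathscr M,\alpha,\mu)$ is amenable.
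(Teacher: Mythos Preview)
Your proposal is correct and follows essentially the same route as the paper: produce a $G$-equivariant conditional expectation $L^\infty(G,r^*\mathscr M,\nu)\to L^\infty(\mathscr M)$ by realizing both algebras inside a common ambient algebra, projecting onto $L^\infty(\mathscr M)\overline{\rtimes}G$ via injectivity, composing with $E$, and then checking equivariance with the bisection partial isometries $v_U$ and Lemma \ref{lem:equivariant E}.

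The only difference is the choice of ambient algebra. The paper embeds $L^\infty(\mathscr M)\overline{\rtimes}G$ into the larger crossed product $L^\infty(G,r^*\mathscr M,\nu)\overline{\rtimes}G$ (via pullback along $r$) and applies Lemma \ref{lem:equivariant E}\eqref{item: action by conjugation} \emph{to that big system} to obtain the identity $\iota(\varphi*\tilde m)=\iota(\varphi\circ r|_U^{-1})\,v_U\,\iota(\tilde m)\,v_U^*$ for free; you instead work inside $\mathcal L(K)$ with the multiplication representation $j$, which avoids constructing the big crossed product but obliges you to verify $j(\varphi*\tilde m)=\iota(\varphi\circ r|_U^{-1})\,v_U\,j(\tilde m)\,v_U^*$ by hand. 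That computation is indeed straightforward (and, in the \'etale setting assumed here, no modular function actually enters: one has $v_U^*=\varphi_\alpha(1_{U^{-1}})$ without any $\Delta$-correction, so your caution there is unnecessary). Both variants yield the same equivariance argument once the conjugation identity is in place.
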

\begin{proof}
	We write $\nu\coloneqq \mu\circ \lambda$.
	Injectivity of $L^\infty(\mathscr M)$ follows from existence of the conditional expectation $E\colon L^\infty(\mathscr M)\overline{\rtimes}G\to L^\infty(\mathscr M)$. We consider $L^\infty(\mathscr M)\subseteq L^\infty(G,r^*\mathscr M,\nu)$ as a subalgebra and similarly $L^\infty(\mathscr M)\overline{\rtimes}G\subseteq L^\infty(G,r^*\mathscr M,\nu)\overline \rtimes G$ as a subalgebra via pulling back along the range map. Since $L^\infty(\mathscr M)\overline \rtimes G$ is injective, there is a completely positive map 
		\[Q\colon L^\infty(G,r^*\mathscr M,\nu)\overline \rtimes G\to L^\infty(\mathscr M)\overline \rtimes G\]
	extending the identity on $L^\infty(\mathscr M)\overline \rtimes G$. By composing with the inclusion \[\iota\colon L^\infty(G,r^*\mathscr M,\nu)\hookrightarrow L^\infty(G,r^*\mathscr M,\nu)\overline \rtimes G\]
	 and the conditional expectation $E\colon L^\infty(\mathscr M)\overline \rtimes G\to L^\infty(\mathscr M)$, we get a conditional expectation
		\[P\coloneqq E\circ Q\circ \iota\colon L^\infty(G,r^*\mathscr M,\nu)\to L^\infty(\mathscr M).\]
	We show that $P$ is equivariant, that is $P(\varphi*f)=\varphi*P(f)$ for all $\varphi\in C_c(G)$ and $f\in L^\infty(G,r^*\mathscr M,\nu)$. By a partition of unity argument, we may assume that $\varphi$ is supported on an open bisection $U\subseteq G$. Using Lemma \ref{lem:equivariant E}\eqref{item: action by conjugation} and the fact that both $v_U$ and $\iota(\varphi\circ r|_U^{-1})$ lie in the multiplicative domain of $Q$, we get 
		\[P(\varphi * f)= E( Q(\iota(\varphi\circ r|_U^{-1})\cdot v_U \iota(f) v_U^*))= (\varphi\circ r|_U^{-1})E( v_U Q(\iota(f)) v_U^*)\]
		and therefore 
		\[\iota\circ P(\varphi * f)=\iota(\varphi\circ r|_U^{-1})\iota\circ E( v_U Q(\iota(f)) v_U^*) =\iota(\varphi\circ r|_U^{-1}) v_U\iota\circ P(f)v_U^*= \iota(\varphi *P(f)),\]
		where we have used Lemma \ref{lem:equivariant E}\eqref{item: equivariant E} at the second equality and Lemma \ref{lem:equivariant E}\eqref{item: action by conjugation} at the third equality.
		Since $\iota$ is injective, this proves $P(\varphi * f)=\varphi * P(f)$. Now amenability of $(G,\mathscr M,\alpha,\mu)$ follows from Theorem \ref{thm:amenability for vN algebras}.
\end{proof}

\subsection{Measurewise amenable actions on $C^*$-algebras}

\begin{defn}\label{defn: amenable G C^*}
\begin{enumerate}
	\item A $C^*$-dynamical system $(G,\mathscr A,\alpha)$ is called \emph{measurewise amenable} if for any of its covariant representations $(\mu,\mathscr H,\pi,U)$, the $W^*$-dynamical system $(G,\pi(\mathscr A)'',\alpha'',\mu)$ introduced in Example \ref{eg:enveloping G von Neumann algebra} is amenable in the sense of Definition \ref{defn:measurewise amenable}. 
	\item We say that $(G,\mathscr A,\alpha)$ has the \emph{approximation property}, if the associated semi-direct product Fell bundle $\mathscr A\rtimes_\alpha G$ defined in Definition \ref{defn:semidirect product} has the approximation property in the sense of Definition \ref{defn:AP}. 
\end{enumerate}
We say that a $G$-$C^*$-algebra $(A,\alpha)$ is measurewise amenable or has the approximation property if the associated $C^*$-dynamical system does so. 
\end{defn}

\begin{eg} Let $(G,\mathscr A,\alpha)$ be a $C^*$-dynamical system.
	\begin{enumerate}
		\item In the case that $C_0(\mathscr A)$ is commutative, the action $\alpha$ is induced by an action of $G$ on the Gelfand Spectrum $X$ of $C_0(\mathscr A)$. Then the system $(G,\mathscr A,\alpha)$ is measurewise amenable if and only if the transformation groupoid $X\rtimes G$ is measurewise amenable in the sense of \cite{ADRenault2000amenableGroupoids} and it has the approximation property if and only if $X\rtimes G$ is topologically amenable in the sense of \cite{ADRenault2000amenableGroupoids}. Since $G$ is \'etale, both notions of amenability are equivalent by \cite[Rem. 3.3.9]{ADRenault2000amenableGroupoids}. 
		\item In the case that $G$ is a discrete group, the system $(G,\mathscr A,\alpha)$ is measurewise amenable if and only if the action $\alpha\colon G\acton  C_0(\mathscr A)$ is amenable in the sense of \cite{anantharaman1987amenableC} and it has the approximation property if and only if the associated Fell bundle has the approximation property in the sense of \cite{Exel-Crelle}. In this setting, both notions of amenabilty are equivalent by \cite[Thm. 2.13, Thm. 3.2]{OzawaSuzuki}.
	\end{enumerate}
\end{eg}

The above examples suggest that measurewise amenability and the approximation property might be equivalent for all $C^*$-dynamical systems satisfying our standing assumptions. Although we are not able to prove this equivalence in general, we have the following:

\begin{thm}\label{thm:AP implies measurewise amenable}
	Let $(G,\mathscr A,\alpha)$ be a $C^*$-dynamical system with the approximation property. Then $(G,\mathscr A,\alpha)$ is measurewise amenable. 
\end{thm}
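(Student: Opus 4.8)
The plan is to fix an arbitrary covariant representation $(\mu,\mathscr H,\pi,U)$ of $(G,\mathscr A,\alpha)$, set $\mathscr M\coloneqq\pi(\mathscr A)''$ and $\nu\coloneqq\mu\circ\lambda$, and to produce for the enveloping $W^*$-dynamical system $(G,\mathscr M,\alpha'',\mu)$ of Example \ref{eg:enveloping G von Neumann algebra} a $G$-equivariant conditional expectation $E\colon L^\infty(G,r^*\mathscr M,\nu)\to L^\infty(\mathscr M)$. By the equivalence \eqref{item:wAP}\,$\Leftrightarrow$\,\eqref{item:measureamenable} of Theorem \ref{thm:amenability for vN algebras} this forces $(G,\mathscr M,\alpha'',\mu)$ to be amenable, and since the covariant representation is arbitrary, $(G,\mathscr A,\alpha)$ is measurewise amenable.

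Let $(a_i)_{i\in\N}$ be an approximation-property net for $\mathscr B\coloneqq\mathscr A\rtimes_\alpha G$ as in Definition \ref{defn:AP}; since $\mathscr B^{(0)}=\mathscr A$ we have $a_i\in C_c(G,r^*\mathscr A)$. Reusing the operators
\[
T_i\colon L^2(G^{(0)},\mathscr H,\mu)\to L^2(G,r^*\mathscr H,\nu^{-1}),\qquad T_i\xi(g)=\Delta^{\frac12}(g)\pi(a_i(g))\xi(r(g))
\]
from the proof of Theorem \ref{thm:APimpliesWC} and writing $\delta$ for the diagonal action of $L^\infty(G,r^*\mathscr M,\nu)$ on $L^2(G,r^*\mathscr H,\nu^{-1})$, I would define completely positive maps
\[
\Phi_i\colon L^\infty(G,r^*\mathscr M,\nu)\to L^\infty(\mathscr M),\qquad \Phi_i(\varphi)(x)=\sum_{g\in G^x}\pi(a_i(g))^*\,\varphi(g)\,\pi(a_i(g)),
\]
which, under the identification of $L^\infty(\mathscr M)$ with the diagonal operators on $L^2(\mathscr H)$, are exactly $T_i^*\,\delta(\varphi)\,T_i$. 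Condition \eqref{item:uniformly bounded} of Definition \ref{defn:AP} gives $\|\Phi_i(1)\|=\|\pi(\langle a_i,a_i\rangle)\|\le\sup_i\|\langle a_i,a_i\rangle\|<\infty$, so the $\Phi_i$ are normal, uniformly bounded and completely positive and admit a point-weak-$*$ cluster point $\Phi$. Because $G^{(0)}$ is clopen in $G$ (étaleness), I can feed Definition \ref{defn:AP}\eqref{item:almostinvariant} a section $f\in C_c(\mathscr B)$ supported on $G^{(0)}$ with $f|_{G^{(0)}}=a$; evaluating $f_i$ on units shows $\Phi_i(\widehat{\pi(a)})\to\pi(a)$ uniformly, where $\widehat m(g)\coloneqq m(r(g))$. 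Hence $\Phi$ fixes the canonical copy of $L^\infty(\mathscr M)$ and, after normalizing $\Phi(1)$, is a conditional expectation.

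The heart of the matter is $G$-equivariance, $\Phi(f*\varphi)=f*\Phi(\varphi)$ for $f\in C_c(G)$. Writing both sides out and reindexing the inner sums, the covariance relation \eqref{eq:covariance condition} in the form $\pi_{r(g)}(\alpha_g(b))=U_g\pi_{s(g)}(b)U_g^*$ turns $f*\Phi_i(\varphi)(x)$ into a double sum identical to $\Phi_i(f*\varphi)(x)$ except that each factor $\pi(a_i(k))$ is replaced by $\pi(\alpha_g(a_i(g^{-1}k)))$. A Cauchy--Schwarz estimate for the $L^\infty(\mathscr M)$-valued inner product then bounds the equivariance defect (integrated against a normal functional) in terms of
\[
\Big\|\sum_{k\in G^{r(g)}}D_i(g,k)^*D_i(g,k)\Big\|=\Big\|\langle a_i,a_i\rangle(r(g))+\alpha_g\big(\langle a_i,a_i\rangle(s(g))\big)-2\,\Re\,\widetilde a_i(g)\Big\|,
\]
where $D_i(g,k)=a_i(k)-\alpha_g(a_i(g^{-1}k))$ and $\widetilde a_i(g)=\sum_{k}a_i(k)^*\alpha_g(a_i(g^{-1}k))$ is precisely the quantity driven to $1$ by Definition \ref{defn:AP}\eqref{item:almostinvariant}.

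This is where I expect the main obstacle to sit: the approximation property controls the \emph{quadratic} expression $\widetilde a_i$, but only supplies a norm bound on $\langle a_i,a_i\rangle$ rather than $\langle a_i,a_i\rangle\to1$, so the displayed quantity does not vanish on the nose and the naive limit $\Phi$ need not be equivariant. Closing this gap is the analogue, in the groupoid-with-coefficients setting, of the genuinely nontrivial passage from Exel's (merely bounded) approximation property to amenability of a Fell bundle over a discrete group. I anticipate resolving it by upgrading the net --- replacing $(a_i)$ by a net for which $\langle a_i,a_i\rangle\to1$ while retaining condition \eqref{item:almostinvariant}, so that the right-hand side above tends to $0$ --- after which a standard $\tfrac{\varepsilon}{3}$-argument yields exact equivariance of the limit. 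Alternatively one may try to realise $\Phi$ through a completely positive lift whose multiplicative domain contains the partial isometries $v_U$, mimicking the mechanism of Lemma \ref{lem:equivariant E} used in the proof of Proposition \ref{prop:injectivity of crossed products}. Either way, once equivariance is secured, Theorem \ref{thm:amenability for vN algebras} completes the argument.
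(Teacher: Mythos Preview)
You have correctly set up the problem and correctly located the obstruction: Definition \ref{defn:AP} only controls the ``twisted'' sums $\tilde a_i$ and gives a mere norm bound on $\langle a_i,a_i\rangle$, so the defect $\langle a_i,a_i\rangle(r(g))+\alpha_g(\langle a_i,a_i\rangle(s(g)))-2\Re\,\tilde a_i(g)$ need not vanish, and your cluster point $\Phi$ is neither obviously equivariant nor a conditional expectation (your normalisation ``divide by $\Phi(1)$'' does not help: for non-central $m$ one has no reason to expect $\Phi(\widehat m)=m\cdot\Phi(1)$, and $\Phi(1)$ need not be invertible). Your proposed fix of ``upgrading the net to one with $\langle a_i,a_i\rangle\to1$'' is exactly the delicate passage from Exel's AP to amenability that, already for discrete groups, required the full machinery of \cite{abadie2021amenability,OzawaSuzuki}; the paper does \emph{not} carry this out here.

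The paper circumvents the obstacle by two modifications, following \cite[Lem.~6.5]{abadie2021amenability}. First, it targets the \emph{center}: it builds a $G$-equivariant conditional expectation $Q\colon L^\infty(G,r^*Z(\pi(\mathscr A)''),\nu)\to Z(\pi(A)'')$ and invokes condition \eqref{item:commmeasureamenable} of Theorem \ref{thm:amenability for vN algebras}, not \eqref{item:measureamenable}. Second, it introduces an approximate unit $(e_\lambda)_\lambda$ for $C_0(\mathscr A)$ and uses the two-parameter family
\[
P_{i,\lambda}(f)=\bigl\langle e_\lambda^{1/2}a_i,\,f\,e_\lambda^{1/2}a_i\bigr\rangle,
\]
taking $\lim_\lambda\lim_i$. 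The AP applied to $e_\lambda\in C_c(\mathscr B^{(0)})$ gives $\lim_i\langle a_i,e_\lambda a_i\rangle=e_\lambda$, and then $\lim_\lambda e_\lambda=1$; this is what replaces your missing $\langle a_i,a_i\rangle\to1$. Centrality of the test functions is used repeatedly: to show that the restrictions $Q_\lambda$ form an \emph{increasing} net (so the limit in $\lambda$ exists genuinely, not just along an ultrafilter), to show that the image of $Q$ lands in $Z(\pi(A)'')$, and, in the equivariance step, to take square roots $\varphi^{1/2}$ and commute them past the $a_i$'s before applying an estimate of type \eqref{eq:elementary norm estimate}. None of these manipulations are available for non-central $\varphi$, which is why the paper does not attempt to produce the expectation \eqref{item:measureamenable} directly.
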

\setcounter{claim}{0}
\begin{proof}
	The proof follows the main ideas of \cite[Lem. 6.5]{abadie2021amenability}. Let $(\mu,\mathscr H,\pi,U)$ be a covariant representation of $(G,\mathscr A,\alpha)$. Write $A=C_0(\mathscr A)$ and $\nu=\mu\circ\lambda$. By Theorem \ref{thm:amenability for vN algebras} it suffices to construct a $G$-equivariant conditional representation
		\[Q\colon L^\infty(G,r^*Z(\pi(\mathscr A)''),\nu)\to Z(\pi(A)'').\]
	Let $(e_\lambda)_\lambda$ be an approximate unit for $A$ in $C_c(G^{(0)},\mathscr A)$. Let $(a_i)_{i\in \N}$ be a sequence in $C_c(G,r^*\mathscr A)$ witnessing the approximation property. We consider $L^2(G,r^*\pi(\mathscr A)'')$ as a Hilbert module over $L^\infty(G^{(0)},\pi(\mathscr A)'',\mu)$ as in the first paragraph of Section \ref{sec:amenable vN}. Then $L^\infty(G,r^*\pi(\mathscr A)'',\nu )$ acts on $L^2(G,r^*\pi(\mathscr A)'')$ by multiplication operators. We consider $\pi( A)''\subseteq L^\infty(G,\pi(\mathscr A)'',\nu )$ as a subalgebra by pulling back along the range map $r$. In particular, we may act on $L^2(G,r^*\pi(\mathscr A)'')$ by $\left(e_\lambda^\frac 1 2\right)_\lambda$. We define a net of completely positive maps $(P_{i,\lambda})_{i,\lambda}$ by 
	\[P_{i,\lambda}\colon L^\infty(G,\pi(\mathscr A)'',\nu )\to \pi(A)'',\quad f\mapsto \left\langle e_\lambda^{\frac 1 2}a_i,fe_\lambda^{\frac 1 2}a_i\right\rangle.\]
	Note that all the $P_{i,\lambda}$ are contractive by the first condition of Definition \ref{defn:AP}. For a fixed $i\in \N$, we consider the product map 
	\[\prod_\lambda P_{i,\lambda}\colon L^\infty(G,r^*\pi(\mathscr A)'',\nu )\to \prod_\lambda \pi(A)''\]
	and we fix a limit point $\tilde P=\prod_\lambda P_\lambda$ of the sequence $\left(\prod_\lambda P_{i,\lambda}\right)_{i\in \N}$		
	in the topology of pointwise ultraweak convergence. We denote by 
		\[Q_\lambda\colon L^\infty(G,r^*Z(\pi(\mathscr A)''),\nu )\to \pi(A)''\]
	the restriction of $P_\lambda$ to the center.
	
	\begin{claim}\label{clm:Q converges}
		The net $(Q_\lambda)_\lambda$ converges to a completely positive map $Q$ in the topology of pointwise ultraweak convergence. 
	\end{claim}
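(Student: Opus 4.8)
The plan is to use the restriction to the center to make $(Q_\lambda)_\lambda$ a \emph{monotone} net, so that ultraweak convergence becomes automatic. First I would arrange—as one always may—that the approximate unit $(e_\lambda)_\lambda$ is increasing, so that $\pi_x(e_\lambda(x))\le \pi_x(e_{\lambda'}(x))$ in $\pi(\mathscr A_x)''$ whenever $\lambda\le\lambda'$.

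The decisive computation is to unwind $P_{i,\lambda}$ on a positive central section $f\in L^\infty(G,r^*Z(\pi(\mathscr A)''),\nu)$. Writing out the $\pi(A)''$-valued inner product and using that, for $g\in G^x$, the element $f(g)\in Z(\pi(\mathscr A_x)'')$ commutes with both $\pi_x(e_\lambda(x))$ and $\pi_x(a_i(g))$ (the latter two lying in $\pi(\mathscr A_x)$), one obtains
\[
	P_{i,\lambda}(f)(x)=\sum_{g\in G^x}\bigl(f(g)^{\frac12}a_i(g)\bigr)^*\,\pi_x(e_\lambda(x))\,\bigl(f(g)^{\frac12}a_i(g)\bigr).
\]
Because $\pi_x(e_\lambda(x))\le\pi_x(e_{\lambda'}(x))$, every summand is nondecreasing in $\lambda$, and hence $P_{i,\lambda}(f)\le P_{i,\lambda'}(f)$ for each $i$ and each positive central $f$. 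This is precisely where the passage to the center is essential: for a noncentral $f$ the factor $f(g)^{1/2}$ cannot be pulled across $\pi_x(e_\lambda(x))$, and the corresponding net need not be monotone.

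I would then pass to the $i$-limit defining $\tilde P=\prod_\lambda P_\lambda$. Since the positive cone is ultraweakly closed, the inequality $P_{i,\lambda}(f)\le P_{i,\lambda'}(f)$ is preserved under the (subnet) limit over $i$, giving $Q_\lambda(f)\le Q_{\lambda'}(f)$ for $\lambda\le\lambda'$ and every positive central $f$, where I use $Q_\lambda(f)=\lim_i P_{i,\lambda}(f)$. Thus $(Q_\lambda(f))_\lambda$ is an increasing net of positive operators, uniformly bounded since the $Q_\lambda$ are contractive (condition \eqref{item:uniformly bounded} of Definition \ref{defn:AP}), and therefore converges ultraweakly to its supremum. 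Decomposing an arbitrary central element into four positive ones yields convergence of $(Q_\lambda(f))_\lambda$ for all $f$, and the pointwise ultraweak limit $Q$ is completely positive and contractive, being such a limit of completely positive contractive maps.

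The step I expect to require the most care is the commutation argument behind the displayed identity: one must verify that a central section genuinely commutes fibrewise with $\pi_x(e_\lambda(x))$ and $\pi_x(a_i(g))$, and that this rewriting is consistent with the Hilbert-module inner product defining $P_{i,\lambda}$. Once the identity and the resulting monotonicity are in place, the remaining ingredients—order preservation under the $i$-limit and ultraweak convergence of bounded increasing nets—are entirely standard.
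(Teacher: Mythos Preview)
Your proposal is correct and follows essentially the same approach as the paper: use centrality of $f$ to rewrite $P_{i,\lambda}(f)$ so that the dependence on $\lambda$ becomes monotone, pass the inequality through the (subnet) limit over $i$, and invoke ultraweak convergence of bounded increasing nets. The only cosmetic difference is that the paper pulls $e_\lambda^{1/2}$ past $f$ to obtain $\langle a_i,e_\lambda f a_i\rangle\le\langle a_i,e_{\lambda'} f a_i\rangle$ directly, whereas you pull $f^{1/2}$ past $e_\lambda$ to sandwich $\pi_x(e_\lambda(x))$ between $f(g)^{1/2}a_i(g)$ and its adjoint; both rewritings rest on exactly the same commutation $[f(g),e_\lambda(x)]=0$ coming from $f(g)\in Z(\pi(\mathscr A_x)'')$.
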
	
	Since the maps $(Q_\lambda)_\lambda$ are completely positive and uniformly bounded, it suffices to show that they are increasing. Fix a positive element $f\in L^\infty(G,r^*Z(\pi(\mathscr A)''),\nu )$ and $\lambda\leq \lambda'$. Then for every $i\in \N$, we have 
		\[\left\langle e_\lambda^{\frac 1 2}a_i,fe_\lambda^{\frac 1 2}a_i\right\rangle = \left\langle a_i,e_\lambda f a_i\right\rangle \leq \left\langle a_i,e_{\lambda'} f a_i\right\rangle =\left\langle e_{\lambda'}^{\frac 1 2}a_i,fe_{\lambda'}^{\frac 1 2}a_i\right\rangle\]
	by centrality and positivity of $f$. Taking the limit along $i\to \infty$, we obtain $Q_\lambda(f)\leq Q_{\lambda'}(f)$ as desired. 
	
	\begin{claim}\label{clm:Q is central}
		The image of $Q$ is contained in $Z(\pi(A)'')$. 
	\end{claim}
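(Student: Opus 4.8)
The plan is to prove centrality by showing that $Q(f)$, which lies in $\pi(A)''$ as a weak-$*$ limit of the $P_{i,\lambda}(f)$, commutes with every self-adjoint $\pi(a)$ with $a\in C_c(G^{(0)},\mathscr A)$; since such elements generate $\pi(A)''$ and $Q(f)$ already lies in $\pi(A)''$, this forces $Q(f)\in \pi(A)'\cap\pi(A)''=Z(\pi(A)'')$. As multiplication by a fixed $\pi(a)$ is weak-$*$ continuous and $Q(f)=\lim_\lambda\lim_i P_{i,\lambda}(f)$, I would reduce everything to the fibrewise commutators $[\pi_x(a(x)),P_{i,\lambda}(f)(x)]$. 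Throughout I use that $f(g)\in Z(\pi(\mathscr A_{r(g)})'')$, so that $f(g)$ commutes with every operator in $\pi(\mathscr A_x)''$; in particular $P_{i,\lambda}(f)(x)=\sum_{g\in G^x}f(g)\,\pi_x(a_i(g)^*e_\lambda(x)a_i(g))$.

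Writing $d_i(g):=[a(x),a_i(g)]$ and taking $a$ self-adjoint, the commutator $[\pi_x(a(x)),P_{i,\lambda}(f)(x)]$ splits, as $f(g)$ is central, into a \emph{middle term} $\sum_{g\in G^x}f(g)\,\pi_x(a_i(g)^*[a(x),e_\lambda(x)]a_i(g))$ and two \emph{gradient terms} $\sum_{g\in G^x}f(g)\,\pi_x(-d_i(g)^*e_\lambda(x)a_i(g)+a_i(g)^*e_\lambda(x)d_i(g))$. The middle term is harmless: its norm is at most $\|f\|_\infty\,\|[a,e_\lambda]\|\,M$, where $M=\sup_i\|\langle a_i,a_i\rangle\|<\infty$ by condition \eqref{item:uniformly bounded}, and since $e_\lambda$ is an approximate unit for $A$ we have $\|[a,e_\lambda]\|\to 0$, so this term vanishes as $\lambda\to\infty$ uniformly in $i$.

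The hard part, and the only place the approximation property genuinely enters, is the two gradient terms: they do \emph{not} vanish term by term for fixed $i$. The key estimate I would establish is that the $e_\lambda$-weighted gradient tends to $0$ in norm, uniformly in $x$, as $i\to\infty$. Setting $\Theta_i^x(c):=\sum_{g\in G^x}a_i(g)^*c\,a_i(g)$, expansion of $d_i(g)=a(x)a_i(g)-a_i(g)a(x)$ gives
\[\sum_{g\in G^x}d_i(g)^*e_\lambda(x)d_i(g)=\Theta_i^x(ae_\lambda a)-\Theta_i^x(ae_\lambda)a-a\,\Theta_i^x(e_\lambda a)+a\,\Theta_i^x(e_\lambda)a,\]
(all arguments abbreviated at $x$). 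Applying condition \eqref{item:almostinvariant} to sections of $\mathscr A\rtimes_\alpha G$ supported on $G^{(0)}$ and evaluating at units — where the twist by $\alpha$ disappears — yields $\Theta_i^x(c)\to c$ \emph{uniformly in} $x$ for every $c$ that is the value at $x$ of a section in $C_c(G^{(0)},\mathscr A)$; taking $c=ae_\lambda a,\ ae_\lambda,\ e_\lambda a,\ e_\lambda$ makes the four terms above cancel in the limit. It is essential to keep $e_\lambda$ inside the sum: after reducing to positive central $f$ and splitting $e_\lambda=e_\lambda^{1/2}e_\lambda^{1/2}$, the Cauchy--Schwarz inequality for the $\mathcal L(\mathscr H_x)$-valued pairing bounds the operator norm of each gradient term by $\|f\|_\infty\,\|\sum_{g}d_i(g)^*e_\lambda(x)d_i(g)\|^{1/2}\,\|\Theta_i^x(e_\lambda)\|^{1/2}$, which tends to $0$ uniformly in $x$ since the second factor is bounded by $M^{1/2}$. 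This factorisation is exactly what avoids ever evaluating $\Theta_i^x$ on the (generally unavailable) unit $1$, which is why the approximate unit is needed.

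Combining the two estimates gives $\|\lim_i[\pi_x(a(x)),P_{i,\lambda}(f)(x)]\|\le \|f\|_\infty\,\|[a,e_\lambda]\|\,M$ uniformly in $x$, and letting $\lambda\to\infty$ yields $[\pi(a),Q(f)]=0$ for all self-adjoint $a\in C_c(G^{(0)},\mathscr A)$, hence $Q(f)\in Z(\pi(A)'')$. I expect the remaining points to be routine bookkeeping: decomposing a general central $f$ into positive central pieces (the positive and negative parts of a central section stay central) and reducing to self-adjoint $a$, and verifying that all the convergences are uniform in $x$ so that they persist after integrating against the normal functionals that compute the weak-$*$ limit.
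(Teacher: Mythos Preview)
Your argument is correct and rests on the same analytic core as the paper's --- namely, that the ``weighted commutators'' $e_\lambda^{1/2}a_ib-be_\lambda^{1/2}a_i$ (or, in your organisation, $\sum_g d_i(g)^*e_\lambda(x)d_i(g)$) vanish in the iterated limit by applying the approximation property to sections supported on $G^{(0)}$ --- but the packaging is genuinely different.

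The paper does not compute $[\pi(a),Q(f)]$ directly. Instead it passes to a further ultraweak accumulation point $P$ of the unrestricted maps $(P_\lambda)_\lambda$, observes that $P(fb)$ is automatically self-adjoint for $f$ positive central and $b$ self-adjoint (since $fb=(fb)^*$ and $P$ is positive), and then proves the \emph{identity} $Q(f)b=P(fb)$ by estimating $|\omega(Q(f)b-P(fb))|$ for normal states $\omega$ via Cauchy--Schwarz and a single four-term expansion of $\|e_\lambda^{1/2}a_ib-be_\lambda^{1/2}a_i\|_\omega^2$. Your approach instead splits the fibrewise commutator $[\pi_x(a(x)),P_{i,\lambda}(f)(x)]$ into a ``middle'' term controlled by $\|[a,e_\lambda]\|$ and two ``gradient'' terms controlled by $\|\sum_g d_i(g)^*e_\lambda(x)d_i(g)\|^{1/2}$, and handles these in the two separate limits.

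What each buys: the paper's trick of comparing $Q(f)b$ with $P(fb)$ is slicker --- no three-term Leibniz decomposition, no separate treatment of a $[a,e_\lambda]$ remainder --- at the cost of introducing the auxiliary map $P$. Your route is more hands-on and stays at the level of uniform fibrewise norm bounds, which makes the passage through the ultraweak limits perhaps more transparent (closed balls are weak-$*$ closed, so your $\limsup$ norm bounds survive the limits); it also never needs to name $P$. Either way the substantive step is the same telescoping identity $\Theta_i^x(ae_\lambda a)-a\Theta_i^x(e_\lambda a)-\Theta_i^x(ae_\lambda)a+a\Theta_i^x(e_\lambda)a\to 0$.
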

	To prove this, it suffices to prove that for every positive $f\in L^\infty(G,r^*Z(\pi(\mathscr A)''),\nu )$ and every self-adjoint $b\in \pi(A)''$, the element $Q(f)b\in \pi(A)''$ is self-adjoint. It moreover suffices to consider $b\in \pi(A)$. By passing to a subnet, we may assume that the net $(P_\lambda)_\lambda$ converges pointwise ultraweakly to a completely positive map $P\colon L^\infty(G,r^*\pi(\mathscr A)'',\nu )\to \pi(A)''$. Since $P(fb)$ is self-adjoint, it suffices to show that $Q(f)b=P(fb)$. For a normal positive linear functional $\omega\in \pi(A)''_*$, we denote by $\|\cdot \|_\omega$ the norm on $L^2(G,r^*\pi(\mathscr A)'')$ given by $\|\xi\|_\omega\coloneqq\sqrt{\omega(\langle \xi,\xi\rangle)}$. 
We have

\begin{align*}
	|\omega(Q(f)b-P(fb))|=&\lim_\lambda\lim_i \left|\omega\left(\left\langle e_\lambda^{\frac 1 2}a_i,fe_\lambda^{\frac 1 2}a_i\right\rangle b-\left\langle e_\lambda^{\frac 1 2}a_i,fbe_\lambda^{\frac 1 2}a_i\right\rangle\right)\right|\\
	=&\lim_\lambda\lim_i\left|\omega\left(\left\langle fe_\lambda^{\frac 1 2}a_i,e_\lambda^{\frac 1 2}a_ib-be_\lambda^{\frac 1 2}a_i\right\rangle \right)\right| \\
	\leq &\lim_\lambda \lim_i \left \|fe_\lambda^{\frac 1 2} a_i\right\| \left\|e_\lambda^{\frac 1 2}a_ib-be_\lambda^{\frac 1 2} a_i\right\|_\omega\\
	\leq &\|f\| \lim_\lambda\lim_i \left\|e_\lambda^{\frac 1 2}a_ib-be_\lambda^{\frac 1 2} a_i\right\|_\omega
\end{align*}
where at the second to last inequality we have used the Cauchy-Schwarz inequality and at the last inequality we have used that $\left\| e_\lambda ^{\frac 1 2}a_i\right\|\leq 1$. The last term in the above inequality is zero since we have 
	\begin{align*}
		&\lim_\lambda\lim_i \omega\left(\left\langle e_\lambda^{\frac 1 2}a_i b-b e_\lambda^{\frac 1 2}a_i,e_\lambda^{\frac 1 2}a_ib-be_\lambda^{\frac 1 2}a_i\right\rangle \right)\\
		=&\lim_\lambda\lim_i \omega\left(\left\langle e_\lambda^{\frac 1 2}a_ib,e_\lambda^{\frac 1 2}a_ib\right\rangle-\left\langle e_\lambda^{\frac 1 2}a_ib,be_\lambda^{\frac 1 2}a_i\right\rangle -\left\langle be_\lambda^{\frac 1 2}a_i,e_\lambda^{\frac 1 2}a_ib\right\rangle+\left\langle be_\lambda^{\frac 1 2}a_i,be_\lambda^{\frac 1 2}a_i\right\rangle\right)\\
		=&\lim_\lambda\lim_i \omega\left(b\left\langle a_i,e_\lambda a_i\right\rangle b	-b\left\langle a_i,e_\lambda^{\frac 1 2}be_\lambda^{\frac 1 2}a_i\right\rangle -\left\langle a_i,e_\lambda^{\frac 1 2}be_\lambda ^{\frac 1 2}a_i\right\rangle b+\left\langle a_i,e_\lambda^{\frac 1 2}b^2e_\lambda^{\frac 1 2}a_i\right\rangle \right)\\
		=&\lim_\lambda \omega\left(be_\lambda b-be_\lambda^{\frac 1 2}be_\lambda^{\frac 1 2}-e_\lambda ^{\frac 1 2}be_\lambda^{\frac 1 2}b+e_\lambda^{\frac 1 2}b^2e_\lambda^{\frac 1 2}\right) \\
		=&0.
	\end{align*}
	Here we used Condition \eqref{item:W* almost inv} of Definition \ref{defn:measurewise amenable} in the second to last step. Since $\omega$ was arbitrary, this proves $Q(f)b=P(fb)$ and establishes Claim \ref{clm:Q is central}. 
	
	\begin{claim}\label{clm:Q conditional expectation}
		$Q$ is a conditional expectation.
	\end{claim}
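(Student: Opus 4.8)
The plan is to deduce the claim from Claims \ref{clm:Q converges} and \ref{clm:Q is central} by supplying the two properties still missing for a conditional expectation: unitality, and the fact that $Q$ restricts to the identity on $Z(\pi(A)'')$. Here $Z(\pi(A)'')$ is regarded as a subalgebra of $L^\infty(G,r^*Z(\pi(\mathscr A)''),\nu)$ via pullback along $r$. Once these are in place, $Q$ is a unital completely positive---hence norm-one---projection of $L^\infty(G,r^*Z(\pi(\mathscr A)''),\nu)$ onto the von Neumann subalgebra $Z(\pi(A)'')$, and Tomiyama's theorem identifies any such projection with a conditional expectation.

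The key step is to evaluate $Q$ on an element $b\in Z(\pi(A)'')$, pulled back along $r$. Since $b$ is central and constant along the fibres of $r$, centrality lets us pull it out of the inner product, giving
\[
P_{i,\lambda}(b)=\left\langle e_\lambda^{\frac 1 2}a_i,\,b\,e_\lambda^{\frac 1 2}a_i\right\rangle=b\left\langle a_i,e_\lambda a_i\right\rangle
\]
for all $i$ and $\lambda$. I would then recognise $\langle a_i,e_\lambda a_i\rangle$ as the approximation-property sequence attached to $e_\lambda$: extending $e_\lambda\in C_c(G^{(0)},\mathscr A)$ by zero yields a section $f\in C_c(\mathscr A\rtimes_\alpha G)$ supported on the clopen set $G^{(0)}$, for which the transform $f_i$ of Definition \ref{defn:AP} is again supported on $G^{(0)}$ and satisfies $f_i|_{G^{(0)}}=\langle a_i,e_\lambda a_i\rangle$. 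By the second condition of Definition \ref{defn:AP}, $f_i\to f$ in the inductive limit topology, so $\langle a_i,e_\lambda a_i\rangle\to e_\lambda$ in norm, hence ultraweakly. Letting $i\to\infty$ gives $P_\lambda(b)=be_\lambda$, and therefore $Q_\lambda(b)=be_\lambda$. As $(e_\lambda)_\lambda$ is an approximate unit for $A$ and $\pi$ is non-degenerate, $be_\lambda\to b$ ultraweakly, so $Q(b)=\lim_\lambda Q_\lambda(b)=b$. Taking $b=1$ shows that $Q$ is unital, and the same computation shows $Q|_{Z(\pi(A)'')}=\id$.

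Combining this with complete positivity (Claim \ref{clm:Q converges}) and with the containment $Q\big(L^\infty(G,r^*Z(\pi(\mathscr A)''),\nu)\big)\subseteq Z(\pi(A)'')$ (Claim \ref{clm:Q is central}), $Q$ is a norm-one projection onto $Z(\pi(A)'')$ and thus a conditional expectation by Tomiyama's theorem, which proves the claim. I expect the only genuinely technical point to be the identification $f_i|_{G^{(0)}}=\langle a_i,e_\lambda a_i\rangle$: this requires unwinding the multiplication and the $C_0(\mathscr B^{(0)})$-valued inner product of the semi-direct product bundle $\mathscr A\rtimes_\alpha G$ and checking that the zero-extension of $e_\lambda$ genuinely lies in $C_c(\mathscr A\rtimes_\alpha G)$, which uses that $G^{(0)}$ is clopen because $G$ is Hausdorff and \'etale. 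The remaining ingredients---the centrality manipulation, passing to the limit through the product net defining $Q$, and the appeal to Tomiyama---are routine.
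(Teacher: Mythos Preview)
Your proposal is correct and follows essentially the same route as the paper. The paper's proof is the single computation
\[
Q(b)=\lim_\lambda \lim_i \left\langle e_\lambda^{\frac 1 2}a_i,\,b\,e_\lambda^{\frac 1 2}a_i\right\rangle
=\lim_\lambda\lim_i\langle a_i,e_\lambda a_i\rangle\, b
=\lim_\lambda e_\lambda b=b,
\]
which is exactly your argument with the centrality manipulation, the identification $\lim_i\langle a_i,e_\lambda a_i\rangle=e_\lambda$ coming from condition \eqref{item:almostinvariant} of Definition \ref{defn:AP} applied to $e_\lambda$, and the approximate-unit limit. The paper does not spell out the passage through Definition \ref{defn:AP} or the final appeal to Tomiyama; your write-up makes both of these explicit, which is fine, but neither constitutes a different approach.
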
	
	For $b\in Z(\pi(A)'')$, we have 
	\begin{align*}
		Q(b)=&\lim_\lambda \lim_i \left\langle e_\lambda^{\frac 1 2}a_i,be_\lambda^{\frac 1 2}a_i\right\rangle =\lim_\lambda\lim_i\langle a_i,e_\lambda a_ib\rangle\\
		=&\lim_\lambda\lim_i\langle a_i,e_\lambda a_i\rangle b =\lim_\lambda e_\lambda b=b,
	\end{align*}
	where we have used $b\in Z(\pi(A)'')$ at the second equality.
	\begin{claim}
		$Q$ is equivariant. 
	\end{claim}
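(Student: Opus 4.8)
The plan is to verify the defining identity $Q(f*\varphi)=f*Q(\varphi)$ of Definition \ref{defn:equivariant maps} directly, where the convolution action on the source $L^\infty(G,r^*Z(\pi(\mathscr A)''),\nu)$ is the one attached to the $W^*$-dynamical system of Example \ref{eg:Linfty(G,M)} (so $(f*\varphi)(k)=\sum_{g\in G^{r(k)}}f(g)\alpha''_g(\varphi(g^{-1}k))$), and the action on the target $Z(\pi(A)'')=L^\infty(Z(\pi(\mathscr A)''))$ is induced by $\alpha''=\Ad(U)$. Since both sides are linear in $f\in C_c(G)$ and the operation $f*(-)$ can be localised, a partition-of-unity argument reduces the claim to $f$ supported on a single open bisection $U\subseteq G$. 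For such an $f$ and $x\in G^{(0)}$, writing $g_x$ for the unique element of $U\cap G^x$ when $x\in r(U)$, the inner sum decouples through $g_x$: one has $(f*\varphi)(k)=f(g_x)\,\alpha''_{g_x}(\varphi(g_x^{-1}k))$ for every $k\in G^x$.

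First I would substitute $Q(\varphi)=\lim_\lambda\lim_i\langle e_\lambda^{1/2}a_i,\varphi\,e_\lambda^{1/2}a_i\rangle$ into both sides, move the unitary $U_{g_x}$ across $\pi$ by the covariance relation \eqref{eq:covariance condition} (valid on a $G$-invariant $\mu$-conull set, to which we restrict), and reindex the sums onto $G^x$ via left translation by $g_x$. Up to the scalar $f(g_x)$, this presents both $Q(f*\varphi)(x)$ and $(f*Q(\varphi))(x)$ as iterated limits of sandwiched quadratic forms $\sum_{m\in G^x}w_i^\lambda(m)^*\,\Phi_m\,w_i^\lambda(m)$ with the \emph{same} central coefficients $\Phi_m=\alpha''_{g_x}(\varphi(g_x^{-1}m))$, but with two different weights: $\xi_i^\lambda(m)=\pi_x(e_\lambda(x)^{1/2}a_i(m))$ on the left, and $\eta_i^\lambda(m)=\pi_x(\alpha_{g_x}(e_\lambda(s(g_x))^{1/2}a_i(g_x^{-1}m)))$ on the right.

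Setting $\delta_i^\lambda=\xi_i^\lambda-\eta_i^\lambda$ and expanding the difference of the two quadratic forms, a Cauchy--Schwarz estimate in the Hilbert module $L^2(G,r^*\pi(\mathscr A)'')$---exactly as in the computation proving Claim \ref{clm:Q is central}, with the uniform bound \eqref{item:uniformly bounded} controlling the surviving factor---reduces everything to showing that the translation defect $\langle\delta_i^\lambda,\delta_i^\lambda\rangle$ tends to $0$ pointwise ultraweakly as $i\to\infty$ and then $\lambda\to\infty$. Expanding $\langle\delta_i^\lambda,\delta_i^\lambda\rangle=\langle\xi,\xi\rangle-\langle\xi,\eta\rangle-\langle\eta,\xi\rangle+\langle\eta,\eta\rangle$, each term is of the shape $\sum_{h\in G^x}a_i(h)^*F(g_x)\alpha_{g_x}(a_i(g_x^{-1}h))$, and the almost-invariance condition \eqref{item:almostinvariant} of the approximation property makes each converge in $i$: applied to $e_\lambda$ pulled back along $r$ (supported on the open set $G^{(0)}$, giving the two diagonal terms) it yields $\langle\xi,\xi\rangle\to e_\lambda(x)$ and $\langle\eta,\eta\rangle\to\alpha_{g_x}(e_\lambda(s(g_x)))$, while applied to the section $g\mapsto e_\lambda(r(g))^{1/2}\alpha_g(e_\lambda(s(g))^{1/2})$ cut off to $U$ (the cross terms) it yields $\langle\xi,\eta\rangle,\langle\eta,\xi\rangle\to e_\lambda(x)^{1/2}\alpha_{g_x}(e_\lambda(s(g_x))^{1/2})$. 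As $\lambda\to\infty$ all four limits tend to $1$, and the signs $+,-,-,+$ cancel, so $\langle\delta_i^\lambda,\delta_i^\lambda\rangle\to0$ and hence $Q(f*\varphi)=f*Q(\varphi)$.

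The hard part will be the bookkeeping in the middle step: correctly unwinding the two distinct convolution actions, matching the left-translated net $a_i(g_x\,\cdot)$ against the $\alpha_{g_x}$-twisted net $\alpha_{g_x}(a_i(\cdot))$, and invoking \eqref{eq:covariance condition} only on the conull subgroupoid where it holds. The conceptual point is that the quadratic almost-invariance \eqref{item:almostinvariant} of the approximation property is precisely what forces the translation defect to vanish; once the four-term cancellation is in place, the iterated-limit manipulations are identical to those already carried out in Claims \ref{clm:Q is central} and \ref{clm:Q conditional expectation}.
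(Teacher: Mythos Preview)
Your proposal is correct and follows essentially the same route as the paper: reduce the difference $f*P_{i,\lambda}(\varphi)-P_{i,\lambda}(f*\varphi)$ to the translation defect $\delta_i^\lambda=e_\lambda^{1/2}a_i-V_g(e_\lambda^{1/2}a_i)$, expand $\langle\delta_i^\lambda,\delta_i^\lambda\rangle$ into four terms, apply condition~\eqref{item:almostinvariant} of the approximation property to pass to the $e_\lambda$-expressions, and let the approximate unit produce the $+,-,-,+$ cancellation. The paper does not reduce to bisections but instead integrates over $G$ against $\nu$ and invokes dominated convergence, and it phrases the reduction step via the elementary inequality $|\|x\|^2-\|y\|^2|\leq(\|x\|+\|y\|)\|x-y\|$ (using positivity of $\varphi$) rather than Cauchy--Schwarz; these are cosmetic variations on the same argument.
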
	
	As in Example \ref{eg:Linfty(G,M)}, denote the diagonal $G$-action on $L^\infty(G,r^*\pi(\mathscr A)'',\nu )$ by $\tilde \alpha$ and denote the diagonal $G$-action on $L^2(G,r^*\pi(A)'')$ by $V$. Note that $\tilde \alpha$ is implemented by conjugation with $V$ and that we have $\langle V_g\xi,V_g\zeta\rangle=\alpha_g(\langle\xi,\zeta\rangle)$.

Let $f\in C_c(G)$ and $\varphi\in L^{\infty}(G,Z(r^*\pi(\mathscr A)''),\nu )$ be positive elements. Let $\omega\in \pi(A)''_*$ be a normal state and write $\omega=\int_{G^{(0)}}\omega_xd\mu(x)$ as in Lemma \ref{lem:normal states on measured fields}. Using the inequality 
	\begin{equation}\label{eq:elementary norm estimate}	
	|\|x\|^2-\|y\|^2|\leq (\|x\|+\|y\|)\|x-y\|
	\end{equation}
	(which holds for any two elements $x,y$ in a normed vector space), we get the estimate 

\begin{align*}
	&|\omega(f*P_{i,\lambda}(\varphi)-P_{i,\lambda}(f*\varphi))|\\
	=&\left|\int_{G^{(0)}}\omega_x\left((f*P_{i,\lambda}(\varphi))(x)-P_{i,\lambda}(f*\varphi)(x)\right)d\mu(x)\right| \\
	=&\left|
		\int_{G^{(0)}}\omega_x
		\left(
			\sum_{g\in G^x}f(g)
			\left(
				\alpha_g
				\left(
					\left\langle e_\lambda^{\frac 1 2}(s(g))a_i|_{G^{s(g)}},\varphi|_{G^{s(g)}}e_\lambda^{\frac 1 2}(s(g))a_i|_{G^{s(g)}}
					\right\rangle
				\right)
			\right.
		\right.
	\right. \\
	&\left.
		\left.
			\left.
			-\left\langle e_\lambda^{\frac 1 2}(x)a_i|_{G^x},\tilde \alpha_g(\varphi|_{G^{s(g)}})e_\lambda^{\frac 1 2}(x)a_i|_{G^x}\right\rangle
			\right)
		\right)d\mu(x)\right|\\
	=&\left|
		\int_{G^{(0)}}\omega_x
		\left(
			\sum_{g \in G^x}f(g)
			\left(
					\left\langle \tilde \alpha_g(\varphi|_{G^{s(g)}})^{\frac 1 2}V_g \left(e_\lambda^{\frac 1 2}(s(g))a_i|_{G^{s(g)}}\right),
					\right.
			\right.
		\right.
	\right.\\
	&\left.
		\left.
			\left.
					\left.					
					\tilde \alpha_g(\varphi|_{G^{s(g)}})^{\frac 1 2}V_g\left(e_\lambda^{\frac 1 2}(s(g))a_i|_{G^{s(g)}}\right)
					\right\rangle
			\right.
		\right.
	\right. \\
	&\left.
		\left.
			\left.
			-\left\langle \tilde \alpha_g(\varphi|_{G^{s(g)}})^{\frac 1 2}e_\lambda^{\frac 1 2}(x)a_i|_{G^x},\tilde  \alpha_g(\varphi|_{G^{s(g)}})^{\frac 1 2}e_\lambda^{\frac 1 2}(x)a_i|_{G^x}\right\rangle
			\right)
		\right)d\mu(x)\right|\\ 
		\leq &\int_G f(g)\left| \left\|\tilde \alpha_g(\varphi|_{G^{s(g)}})^{\frac 1 2}V_g \left(e_\lambda^{\frac 1 2}(s(g))a_i|_{G^{s(g)}}\right)\right\|_{\omega_{r(g)}}^2-\left\|\tilde \alpha_g(\varphi|_{G^{s(g)}})^{\frac 1 2}e_\lambda^{\frac 1 2}(x)a_i|_{G^x}\right\|_{\omega_{r(g)}}^2\right|d\nu(g) \\
		\leq&D\cdot \int_G f(g)\left\|e^{\frac 1 2}_\lambda(r(g))a_i|_{G^{r(g)}}-V_g\left(e_\lambda^{\frac 1 2}(s(g))a_i|_{G^{s(g)}}\right)\right\|_{\omega_{r(g)}}d\nu (g)
\end{align*}
for some constant $D\in (0,\infty)$ that as a consequence of the first condition of Definition \ref{defn:AP} only depends on $\varphi$, but not on $i$ or on $\lambda$. Here, we have used \eqref{eq:elementary norm estimate} at the last inequality. We rewrite the square of the integrand of the above integral as
\begin{align*}
	&\left(f(g)\left\|e^{\frac 1 2}_\lambda(r(g))a_i|_{G^{r(g)}}-V_g\left(e_\lambda^{\frac 1 2}(s(g))a_i|_{G^{s(g)}}\right)\right\|_{\omega_{r(g)}}\right)^2\\
	=&f(g)^2\omega_{r(g)}\Bigg( 
		\left\langle e^{\frac 1 2}_\lambda(r(g))a_i|_{G^{r(g)}},e^{\frac 1 2}_\lambda(r(g))a_i|_{G^{r(g)}}\right\rangle\\
			-&\left\langle e^{\frac 1 2}_\lambda(r(g))a_i|_{G^{r(g)}}, \alpha_g(e_\lambda^{\frac 1 2}(s(g)))V_g\left(a_i|_{G^{s(g)}}\right)	\right\rangle\\
			-&\left \langle   \alpha_g(e_\lambda^{\frac 1 2}(s(g)))V_g\left(a_i|_{G^{s(g)}}\right) ,e^{\frac 1 2}_\lambda(r(g))a_i|_{G^{r(g)}}\right\rangle \\
			+&\left\langle  \alpha_g(e_\lambda^{\frac 1 2}(s(g)))V_g\left(a_i|_{G^{s(g)}}\right), \alpha_g(e_\lambda^{\frac 1 2}(s(g)))V_g\left(a_i|_{G^{s(g)}}\right)\right\rangle
		\Bigg)
\end{align*}

By the second condition of Definition \ref{defn:AP}, this expression converges for $i\to \infty$ to
\begin{align*}
		f(g)^2\omega_{r(g)}\left(e_\lambda (r(g))-e_\lambda^{\frac 1 2}(r(g)) \alpha_g(e_\lambda^{\frac 1 2}(s(g)))- \alpha_g(e_\lambda^{\frac 1 2}(s(g)))e_\lambda^{\frac 1 2}(r(g))- \alpha_g(e_\lambda(s(g)))\right),
\end{align*}

where the convergence is uniformly in $g$.
Since $(e_\lambda)_\lambda$ is an approximate unit, the above term converges to $0$ for all $g\in G$\footnote{If we take $(e_\lambda)_\lambda$ to be the set of all positive contractions in $C_c(\mathscr A)$, then $(e_\lambda^{\frac 1 2})_\lambda$ is a subnet and therefore an approximate unit itself. Then $(e_\lambda^{\frac 1 2}(r(g)))_\lambda$ and $(\alpha_g(e_\lambda^{\frac 1 2}(s(g)))_\lambda$ are both approximate units for $\mathscr A_{r(g)}$, so they converge to $1$ ultraweakly and therefore the expression displayed converges to zero.}. The above calculations together with Lebesgue's dominated convergence theorem imply that 
	\[|\omega(f*Q(\varphi)-Q(f*\varphi))|=\lim_\lambda\lim_i |\omega(f*P_{i,\lambda}(\varphi)-P_{i,\lambda}(f*\varphi))|=0.\]
Since $\omega\in \pi(A)''_*$ was arbitrary, this implies $f*Q(\varphi)=Q(f*\varphi)$ and finishes the proof. 
\end{proof}

We close this section by showing that measurewise amenability implies equality of the full and reduced crossed products:

\begin{thm}\label{thm:measurewise amenable implies WC}
	Let $(G,\mathscr A,\alpha)$ be a measurewise amenable $C^*$-dynamical system. Then we have $C_0(\mathscr A)\rtimes G=C_0(\mathscr A)\rtimes_r G$. 
\end{thm}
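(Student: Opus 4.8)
The plan is to follow the strategy of Theorem \ref{thm:APimpliesWC}, replacing the approximation-property sequence by the almost-invariant central vectors supplied by measurewise amenability. By Renault's disintegration theorem (Theorem \ref{thm:covariant-disintegration}), every non-degenerate representation of $C_0(\mathscr A)\rtimes G$ on a separable Hilbert space is the integrated form $\pi\rtimes U$ of a covariant representation $(\mu,\mathscr H,\pi,U)$; since such representations separate the points of $C_0(\mathscr A)\rtimes G$, it suffices to show that each $\pi\rtimes U$ factors through $C_0(\mathscr A)\rtimes_r G$ via the regular representation. Fix such a covariant representation and write $\mathscr M\coloneqq \pi(\mathscr A)''$, $\nu\coloneqq \mu\circ\lambda$, and $\Delta\coloneqq \frac{d\nu}{d\nu^{-1}}$. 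By measurewise amenability, the $W^*$-dynamical system $(G,\mathscr M,\alpha'',\mu)$ is amenable, so Definition \ref{defn:measurewise amenable} furnishes a sequence $(a_i)_{i\in\N}$ in $L^2(G,r^*Z(\mathscr M))$ with $C\coloneqq\sup_i\|\langle a_i,a_i\rangle\|<\infty$ and with $\tilde a_i\to 1$ weak-$*$ in $L^\infty(G,r^*Z(\mathscr M),\nu)$, where $\tilde a_i(g)=\sum_{h\in G^{r(g)}}a_i(h)^*\alpha''_g(a_i(g^{-1}h))$.

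Mirroring the proof of Theorem \ref{thm:APimpliesWC}, I would define operators $T_i\colon L^2(G^{(0)},\mathscr H,\mu)\to L^2(G,r^*\mathscr H,\nu^{-1})$ by $T_i\xi(g)\coloneqq \Delta^{\frac12}(g)\,a_i(g)\xi(r(g))$, using that each $a_i(g)\in Z(\mathscr M_{r(g)})\subseteq\mathcal L(\mathscr H_{r(g)})$ acts on $\mathscr H_{r(g)}$. A direct computation gives $\|T_i\xi\|^2=\int_{G^{(0)}}\langle\xi(x),\langle a_i,a_i\rangle(x)\xi(x)\rangle\,d\mu(x)$, so $\|T_i\|^2\le C$, and the adjoint is $T_i^*\eta(x)=\sum_{g\in G^x}a_i(g)^*\eta(g)\Delta^{-\frac12}(g)$. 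Let $M$ be the representation of Lemma \ref{lem:williams} associated with the strict representation $\pi^{\mathscr B}_g(b)=\pi(b)U_g$ of the semi-direct product bundle $\mathscr B=\mathscr A\rtimes_\alpha G$; by that lemma $M$ factors through $C^*_r(\mathscr B)=C_0(\mathscr A)\rtimes_r G$. Setting $V_i(f)\coloneqq T_i^*M(f)T_i$, the same manipulation as in Theorem \ref{thm:APimpliesWC} --- moving each $U_h$ past $a_i(h^{-1}g)$ to produce $\alpha''_h(a_i(h^{-1}g))$, which is central and hence commutes past $\pi(f(h))$, then collapsing the inner sum using Lemma \ref{lem:hom-almost-everywhere} to rewrite $\Delta^{\frac12}(h^{-1}g)\Delta^{-\frac12}(g)=\Delta^{-\frac12}(h)$ --- yields the key identity
\[
	V_i(f)\xi(x)=\sum_{h\in G^x}\tilde a_i(h)\,\pi(f(h))U_h\xi(s(h))\,\Delta^{-\frac12}(h),\qquad f\in C_c(\mathscr B).
\]
Thus $V_i(f)$ is exactly $(\pi\rtimes U)(f)$ with the central factor $\tilde a_i(h)$ inserted.

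It then remains to pass to the limit. For fixed $\xi,\zeta\in L^2(G^{(0)},\mathscr H,\mu)$ I would rewrite $\langle V_i(f)\xi,\zeta\rangle=\int_G\omega_h(\tilde a_i(h))\,d\nu(h)$, where $\omega_h(m)\coloneqq\Delta^{-\frac12}(h)\langle m\,\pi(f(h))U_h\xi(s(h)),\zeta(r(h))\rangle$ is the normal vector functional on $\mathscr M_{r(h)}$ restricted to $Z(\mathscr M_{r(h)})$. The crux is to verify that $h\mapsto\omega_h$ is a genuine element of the predual of $L^\infty(G,r^*Z(\mathscr M),\nu)$, that is, that $\int_G\|\omega_h\|\,d\nu(h)<\infty$; this is a Cauchy--Schwarz estimate over the fibers $G^x$ using that $f$ is compactly supported and $\xi,\zeta\in L^2$, in the spirit of the $I$-norm bounds. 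Granting this, the weak-$*$ convergence $\tilde a_i\to 1$ gives $\langle V_i(f)\xi,\zeta\rangle\to\int_G\omega_h(1)\,d\nu(h)=\langle(\pi\rtimes U)(f)\xi,\zeta\rangle$, so $V_i(f)\to(\pi\rtimes U)(f)$ in the weak operator topology.

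Finally I would assemble the conclusion. Since $M$ factors through $C^*_r(\mathscr B)$ and $\|T_i\|^2\le C$, each $V_i$ factors through $C^*_r(\mathscr B)$ with $\|V_i(f)\|\le C\|f\|_r$ uniformly in $i$. Weak lower semicontinuity of the operator norm then yields $\|(\pi\rtimes U)(f)\|\le\liminf_i\|V_i(f)\|\le C\|f\|_r$ for all $f\in C_c(\mathscr B)$. Hence the assignment $\lambda(f)\mapsto(\pi\rtimes U)(f)$ is a well-defined, bounded $*$-homomorphism on the dense subalgebra $\lambda(C_c(\mathscr B))\subseteq C_0(\mathscr A)\rtimes_r G$, and extends to a representation of $C_0(\mathscr A)\rtimes_r G$ through which $\pi\rtimes U$ factors. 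As this holds for all covariant representations, the regular representation $C_0(\mathscr A)\rtimes G\to C_0(\mathscr A)\rtimes_r G$ is injective, hence an isomorphism. I expect the integrability of $h\mapsto\omega_h$ (ensuring the predual pairing is legitimate) to be the main technical obstacle, since, in contrast to Theorem \ref{thm:APimpliesWC} where the approximation property delivers norm convergence directly, here amenability provides only weak-$*$ convergence and one must arrange the pairing so that this weaker convergence can still be exploited.
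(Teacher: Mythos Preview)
Your proof is correct and follows essentially the same approach as the paper's: the paper also defines the operators $T_i$ and $V_i=T_i^*M_\pi(\,\cdot\,)T_i$ exactly as you do, performs the same reduction using centrality of the $a_i(g)$ and Lemma~\ref{lem:hom-almost-everywhere}, and then passes to the limit via the weak-$*$ convergence of $\tilde a_i\to 1$. The integrability of $h\mapsto\omega_h$ that you flag is immediate from the compact support of $f$ together with Cauchy--Schwarz over each fiber $G^x$, and the paper does not dwell on it either.
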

\begin{proof}
	The proof uses the same idea as the proof of Theorem \ref{thm:APimpliesWC} and is modeled after \cite[Thm. 1]{williamsamenability}. 
	We show that the integrated form $\pi\rtimes U$ of  every covariant representation $(\mu,\mathscr H,\pi,U)$ of $(G,\mathscr A,\alpha)$ factors through the reduced crossed product. Let $(a_i)_{i\in \N}$ be a sequence in $L^1(G,r^*Z(\pi(\mathscr A)''))_1^+$ as in Definition \ref{defn:measurewise amenable}, $\nu\coloneqq\mu\circ\lambda $ and $\Delta\coloneqq \frac{d\nu}{d\nu^{-1}}$. We define operators
	\[T_i\colon L^2(G^{(0)},\mathscr H,\mu)\to L^2(G,r^*\mathscr H,\nu^{-1}),\quad T_i\xi(g)\coloneqq\Delta^{\frac 1 2}(g)a_i(g)\xi(r(g))\]
	whose adjoints are given by
		\[T_i^*\eta(x)=\sum_{g\in G^x}a_i(g)^*\eta(g)\Delta^{-\frac 1 2}(g),\quad \eta\in L^2(G^{(0)},\mathscr H,\mu), x\in G^{(0)}.\]
	Then the maps
		\[V_i\colon C_c(G,r^*\mathscr A)\to \mathcal L(L^2(G^{(0)},\mathscr H,\mu)),\quad V_i(f)\coloneqq T_i^*M_\pi(f)T_i\]
	extend to $C_0(\mathscr A)\rtimes_r G$ by Lemma \ref{lem:williams} where $M_\pi$ is given by 
		\[M_\pi(f)\xi(g)=\sum_{h\in G^{r(g)}}\pi(f(h)) U_h\xi(h^{-1}g)\]
	for $f\in C_c(G,r^*\mathscr A), \xi \in L^2(G^{(0)},\mathscr H,\mu)$, and $g\in G$.
	It follows from the first condition of Definition \ref{defn:measurewise amenable} that the maps $(V_i)_{i\in \N}$ are uniformly bounded. Now for $f\in C_c(G,r^*\mathscr A)$ and $\xi,\eta\in L^2(G^{(0)},\mathscr H,\mu)$, we have
	\begin{align*}
		&\langle \xi,V_i(f)\eta\rangle =\int_{G^{(0)}}\langle \xi(x)(T_i^*M_\pi(f)T_i\eta)(x)\rangle d\mu(x)\\
		=&\int_{G^{(0)}}\left\langle \xi(x),\sum_{g\in G^x}a_i(g)^*\left((M_\pi(f)T_i\eta)(g)\right)\Delta^{-\frac 1 2}(g) \right\rangle d\mu(x)\\
		=&\int_{G^{(0)}}\left\langle \xi(x),\sum_{g\in G^x}a_i(g)^*\left(\sum_{h\in G^x}\pi(f(h))U_h((T_i \eta)(h^{-1}g))\right)\Delta^{-\frac 1 2}(g)\right\rangle d\mu(x)\\
		=&\int_{G^{(0)}}\sum_{g,h\in G^x}\left\langle \xi(x),a_i(g)^*\pi(f(h)U_ha_i(h^{-1}g)\eta(s(h))\Delta^{\frac 1 2}(h^{-1}g)\Delta^{-\frac 1 2}(g)\right\rangle d\mu(x)\\
		=&\int_{G^{(0)}}\sum_{h\in G^x}\left\langle\xi(x),\underbrace{\left(\sum_{g\in G^x}a_i(g)^*\alpha_h(a_i(h^{-1}g))\right)}_{\eqqcolon\tilde a_i(h)}\pi(f(h))U_h(\eta(s(h)))\Delta^{-\frac 1 2}(h)\right\rangle d\mu(x)\\
		=&\int_G\left\langle \xi(r(h)),\tilde a_i(h)\pi(f(h))U_h(\eta(s(h)))\Delta^{-\frac 1 2}(h)\right\rangle d\nu(h)\\
		\xrightarrow{i\to \infty} &\int_G\left\langle \xi(r(h)),\pi(f(h))U_h(\eta(s(h)))\Delta^{-\frac 1 2}(h)\right\rangle d\nu(h)=\langle \xi,\pi\rtimes U(f)\eta\rangle
	\end{align*}
	Here we used centrality of the $(a_i)_{i\in \N}$, the second condition of Definition \ref{defn:measurewise amenable}, and Lemma \ref{lem:hom-almost-everywhere}. Since the maps $(V_i)_{i\in \N}$ are uniformly bounded, an $\frac\varepsilon 3$-argument shows that we also have $\langle \xi,V_i(f)\eta\rangle \xrightarrow{i\to \infty} \langle \xi,\pi\rtimes U(f)\eta\rangle$ for general $f\in C_0(\mathscr A)\rtimes G$. Since all the $(V_i)_{i\in \N}$ factor through $C_0(\mathscr A)\rtimes_r G$, it follows that $\pi\rtimes U$ factors through $C_0(\mathscr A)\rtimes_r G$. 
\end{proof}

\subsection{Nuclearity of crossed products}
The following theorem is a generalization of \cite[Thm. 4.5]{anantharaman1987amenableC}.
\begin{thm}\label{thm:Nuclearity for groupoid crossed products}
	Let $(G,\mathscr A,\alpha)$ be a $C^*$-dynamical system. Then $C_0(\mathscr A)\rtimes_r G$ is nuclear if and only if $C_0(\mathscr A)$ is nuclear and $(G,\mathscr A,\alpha)$ is measurewise amenable. 
\end{thm}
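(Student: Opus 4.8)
The plan is to prove both implications by passing to biduals and using the characterization of nuclearity as injectivity of the enveloping von Neumann algebra. Write $A=C_0(\mathscr A)$ and $B=A\rtimes_r G$; since $A$ is separable so is $B$, and $B$ is nuclear if and only if $B^{**}$ is injective. Both directions will be reduced, via the integrated forms of covariant representations, to statements about the von Neumann crossed products $\pi(\mathscr A)''\overline{\rtimes}G$ of Example~\ref{eg:enveloping G von Neumann algebra}.

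For the forward implication ($B$ nuclear $\Rightarrow$ $A$ nuclear and measurewise amenable): first I would use the canonical conditional expectation $E\colon B\to A$ (restriction of sections to the open set $\GG$, which is a conditional expectation since $G$ is \'etale) together with the standard fact that the range of a conditional expectation from a nuclear algebra is nuclear, to conclude that $A$ is nuclear. For measurewise amenability, fix a covariant representation $(\mu,\mathscr H,\pi,U)$. The key observation is that $\pi(\mathscr A)''\overline{\rtimes}G$ is exactly the weak closure of $M_\pi(C_c(G,r^*\mathscr A))$, because the representation $\varphi_{\alpha''}$ generating the crossed product agrees with $M_\pi$ on the weakly dense subalgebra $C_c(G,r^*\mathscr A)\subseteq S(r^*\pi(\mathscr A)'')$. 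By Lemma~\ref{lem:williams} the representation $M_\pi$ factors through $C^*_r(\mathscr A\rtimes_\alpha G)=B$, so $\pi(\mathscr A)''\overline{\rtimes}G=M_\pi(B)''$ is a corner of the injective algebra $B^{**}$ and hence injective. Proposition~\ref{prop:injectivity of crossed products} then gives amenability of $(G,\pi(\mathscr A)'',\alpha'',\mu)$, and since the covariant representation was arbitrary, $(G,\mathscr A,\alpha)$ is measurewise amenable.

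For the converse ($A$ nuclear and measurewise amenable $\Rightarrow$ $B$ nuclear): I would first invoke Theorem~\ref{thm:measurewise amenable implies WC} to get $A\rtimes G=A\rtimes_r G=B$, so that by Theorem~\ref{thm:covariant-disintegration} every representation of $B$ is an integrated form with weak closure $\pi(\mathscr A)''\overline{\rtimes}G$. Using the standard reduction of nuclearity of a separable $C^*$-algebra to injectivity of the von Neumann algebras generated by its factor representations, together with the fact that a direct integral of injective von Neumann algebras is injective, it suffices to show that $\pi(\mathscr A)''\overline{\rtimes}G$ is injective for every covariant representation. Here nuclearity of $A$ gives that the coefficient algebra $L^\infty(\pi(\mathscr A)'')=\pi(A)''$ is injective, and measurewise amenability gives, by Definition~\ref{defn: amenable G C^*}, that $(G,\pi(\mathscr A)'',\alpha'',\mu)$ is amenable. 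I would then apply the converse to Proposition~\ref{prop:injectivity of crossed products} to conclude.

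The main obstacle is precisely this last ingredient, the implication \emph{amenable $+$ injective coefficients $\Rightarrow$ injective crossed product}, which is not contained in Proposition~\ref{prop:injectivity of crossed products}. I would prove it as follows. Amenability supplies, via Theorem~\ref{thm:amenability for vN algebras}\eqref{item:measureamenable}, a $G$-equivariant conditional expectation $P\colon L^\infty(G,r^*\mathscr M,\nu)\to L^\infty(\mathscr M)$, and equivariance makes $P$ descend to a normal conditional expectation $P\,\overline{\rtimes}\,\id$ from the diagonal crossed product $L^\infty(G,r^*\mathscr M,\nu)\overline{\rtimes}G$ onto $L^\infty(\mathscr M)\overline{\rtimes}G$, with both algebras realized on $L^2(G,r^*\mathscr H,\nu^{-1})$ as in Proposition~\ref{prop:injectivity of crossed products}. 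It then remains to show that the diagonal crossed product is injective whenever $L^\infty(\mathscr M)$ is. This is a Fell-absorption (stabilization) phenomenon: the diagonal system of Example~\ref{eg:Linfty(G,M)} is implemented by the regular representation $\tilde U$, and I expect $L^\infty(G,r^*\mathscr M,\nu)\overline{\rtimes}G$ to be spatially isomorphic to an amplification $L^\infty(\mathscr M)\,\overline{\otimes}\,\mathcal L(\ell^2)$, hence injective since injectivity is preserved under tensoring with $\mathcal L(\ell^2)$ and under conditional expectations. Establishing this isomorphism, the groupoid analogue of $L^\infty(G)\rtimes G\cong\mathcal B(L^2 G)$, and verifying the measurability needed for the direct-integral reduction, are the two points I expect to require the most care.
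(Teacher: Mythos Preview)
Your forward implication is essentially the paper's proof: conditional expectation gives nuclearity of $A$, and Lemma~\ref{lem:williams} identifies $\pi(\mathscr A)''\overline{\rtimes}G$ with the weak closure of $M_\pi(B)$, so nuclearity of $B$ yields injectivity and Proposition~\ref{prop:injectivity of crossed products} applies.

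For the converse, however, the paper takes a completely different route. It does \emph{not} prove the implication ``amenable $W^*$-system with injective coefficients $\Rightarrow$ injective crossed product'' that you need. Instead it argues at the $C^*$-level by a tensor product trick: for every unital separable $C^*$-algebra $D$ one establishes the chain
\[
(A\rtimes G)\otimes_{\max}D\cong (A\otimes_{\max}D)\rtimes G\cong (A\otimes_{\max}D)\rtimes_r G\cong (A\otimes D)\rtimes_r G\cong (A\rtimes_r G)\otimes D,
\]
using Lemmas~\ref{lem:maximalandmaximalproduct} and~\ref{lem:reducedandminimalproduct} for the outer identifications, nuclearity of $A$ for the third, and Lemma~\ref{lem: measurewise amenable and tensor products} together with Theorem~\ref{thm:measurewise amenable implies WC} for the second. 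Nuclearity then follows from Lemma~\ref{lem:easynuclearity}. This mirrors the proof of Theorem~\ref{thm:APimpliesnuclearity} and avoids any von Neumann algebraic converse to Proposition~\ref{prop:injectivity of crossed products}.

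Your proposed route is in the spirit of Anantharaman-Delaroche's original argument for groups and is plausible, but there are genuine gaps beyond those you flag. First, the conditional expectation $P$ produced by Theorem~\ref{thm:amenability for vN algebras}\eqref{item:measureamenable} is obtained as a point-ultraweak limit and is \emph{not} asserted to be normal; without normality, ``$P\,\overline{\rtimes}\,\id$'' does not obviously make sense on the weak closure $L^\infty(G,r^*\mathscr M,\nu)\overline{\rtimes}G$. Second, your reduction requires identifying $(\pi\rtimes U)(B)''$ on $L^2(\mathscr H)$ with the crossed product $\pi(\mathscr A)''\overline{\rtimes}G$, which by definition lives on $L^2(G,r^*\mathscr H,\nu^{-1})$; this identification is believable but is not provided in the paper. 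Third, the groupoid Fell-absorption isomorphism $L^\infty(G,r^*\mathscr M,\nu)\overline{\rtimes}G\cong L^\infty(\mathscr M)\,\overline{\otimes}\,\mathcal L(\ell^2)$ you invoke is not established here either. The paper's tensor product argument sidesteps all three issues.
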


For the proof we need the following lemma.

\begin{lem}\label{lem: measurewise amenable and tensor products}
	Let $(G,\mathscr A,\alpha)$ be a measurewise amenable $C^*$-dynamical system and  $B$ a separable, unital $C^*$-algebra. Then $(G,\mathscr A\otimes_{\max}B,\alpha\otimes \id)$ is measurewise amenable too. If we additionally assume that $C_0(\mathscr A)$ is exact, then the same statement holds for the minimal tensor product. 
\end{lem}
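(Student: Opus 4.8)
The plan is to reduce measurewise amenability of $(G,\mathscr A\otimes_{\max}B,\alpha\otimes\id)$ to that of $(G,\mathscr A,\alpha)$ by transporting a witnessing sequence across an inclusion of center bundles, using Definition \ref{defn: amenable G C^*}. Fix a covariant representation $(\mu,\mathscr H,\sigma,U)$ of $(G,\mathscr A\otimes_{\max}B,\alpha\otimes\id)$ and decompose $\sigma=\int_{G^{(0)}}\sigma_x\,d\mu(x)$ as in Proposition \ref{direct integral representation}, so that each $\sigma_x\colon\mathscr A_x\otimes_{\max}B\to\mathcal L(\mathscr H_x)$ is nondegenerate for $\mu$-a.e. $x$. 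Since $B$ is unital, setting $\pi_x(a)\coloneqq\sigma_x(a\otimes 1)$ and $\rho_x(b)\coloneqq\overline{\sigma_x}(1\otimes b)$ (using the extension to multipliers) yields Borel families of nondegenerate representations with commuting ranges. First I would check that $\pi\coloneqq\int_{G^{(0)}}\pi_x\,d\mu(x)$ together with $U$ forms a covariant representation of $(G,\mathscr A,\alpha)$: the covariance condition $\pi_{r(g)}(\alpha_g(a))=U_g\pi_{s(g)}(a)U_g^*$ is inherited from the covariance of $\sigma$ applied to $a\otimes 1$, using $(\alpha\otimes\id)_g(a\otimes 1)=\alpha_g(a)\otimes 1$. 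By measurewise amenability of $(G,\mathscr A,\alpha)$, the enveloping system $(G,\pi(\mathscr A)'',\alpha'',\mu)$ of Example \ref{eg:enveloping G von Neumann algebra} is amenable.

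The crux is a fiberwise inclusion of centers. Writing $\mathscr N_x\coloneqq\sigma_x(\mathscr A_x\otimes_{\max}B)''$, nondegeneracy of $\pi_x$ gives $\rho_x(B)\subseteq\mathscr N_x$, whence $\mathscr N_x=(\pi_x(\mathscr A_x)\cup\rho_x(B))''$. As $\sigma_x$ is a representation of the tensor product, $\pi_x(\mathscr A_x)$ and $\rho_x(B)$ commute, so $\pi_x(\mathscr A_x)''\subseteq\rho_x(B)'$ by the bicommutant theorem. Consequently any $z\in Z(\pi_x(\mathscr A_x)'')$ lies in $\pi_x(\mathscr A_x)''\subseteq\mathscr N_x$ and commutes with both $\pi_x(\mathscr A_x)$ and $\rho_x(B)$, hence with all of $\mathscr N_x$; that is,
\[
Z(\pi_x(\mathscr A_x)'')\subseteq Z(\mathscr N_x)\qquad\text{for }\mu\text{-a.e. }x.
\]
Since both actions are implemented by $\Ad(U_g)$, this is an inclusion of $G$-von Neumann bundles over the center, and it induces a normal, $G$-equivariant, unital inclusion $L^\infty(G,r^*Z(\pi(\mathscr A)''),\nu)\hookrightarrow L^\infty(G,r^*Z(\mathscr N),\nu)$ with $\nu=\mu\circ\lambda$.

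Finally I would transport the witnessing sequence. Let $(a_i)_{i\in\N}$ in $L^2(G,r^*Z(\pi(\mathscr A)''))$ witness amenability of $(G,\pi(\mathscr A)'',\alpha'',\mu)$ as in Definition \ref{defn:measurewise amenable}, and view it as a sequence in $L^2(G,r^*Z(\mathscr N))$ via the inclusion above. Condition \eqref{item:W*almost 1} is preserved because the inclusion is isometric, so the inner products $\langle a_i,a_i\rangle$ have the same norm in either bundle. For condition \eqref{item:W* almost inv}, the element $\tilde a_i(g)=\sum_{h\in G^{r(g)}}a_i(h)^*(\alpha\otimes\id)''_g(a_i(g^{-1}h))$ coincides with the one computed for the $\pi(\mathscr A)''$-system, since $(\alpha\otimes\id)''_g=\Ad(U_g)=\alpha''_g$ on the common center and preserves it; and weak-$*$ convergence $\tilde a_i\to 1$ transfers from the smaller algebra to the larger one precisely because the inclusion is normal, so every normal functional on the larger algebra restricts to a normal functional on the smaller. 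Thus $(a_i)_{i\in\N}$ witnesses amenability of $(G,\mathscr N,(\alpha\otimes\id)'',\mu)$, proving measurewise amenability of $(G,\mathscr A\otimes_{\max}B,\alpha\otimes\id)$. The minimal tensor product case is identical once $\mathscr A\otimes B$ is available, which requires $C_0(\mathscr A)$ exact; the commutation and center arguments only use that $\sigma_x$ is generated by commuting copies of $\mathscr A_x$ and $B$. The main obstacle I anticipate is the bookkeeping around the center inclusion—verifying that $Z(\pi(\mathscr A)'')$ is a Borel $G$-subbundle of $Z(\mathscr N)$ and that the induced $L^\infty$-inclusion is normal—rather than any genuinely new analytic input.
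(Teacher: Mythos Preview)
Your proposal is correct and follows essentially the same route as the paper: split the tensor-product representation into commuting pieces to obtain a covariant representation of $(G,\mathscr A,\alpha)$, observe the unital normal inclusion $Z(\pi(\mathscr A)'')\hookrightarrow Z(\sigma(\mathscr A\otimes_{\max}B)'')$, and push the witnessing sequence through it. The only difference is cosmetic---the paper cites \cite[Lem.~4.3]{LalondeNuclearity} for the splitting and asserts the center inclusion without comment, whereas you disintegrate by hand and spell out the bicommutant argument; your anticipated ``bookkeeping'' around Borel structure and normality is indeed routine, since everything sits inside $\End(\mathscr H)$ and both actions are implemented by the same $U$.
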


\begin{proof}
	We prove the statement for the maximal tensor product only since the proof for the minimal tensor product is identical (we only use exactness in the construction of $\mathscr A\otimes B$).
	Let $(\mu,\mathscr H,\pi,U)$ be a covariant representation of $(G,\mathscr A\otimes_{\max}B,\alpha\otimes \id)$. By \cite[Lem.~4.3]{LalondeNuclearity}, there are non-degenerate commuting representations 
	\[\pi_\mathscr A\colon C_0(\mathscr A)\to \mathcal L(L^2(G^{(0)},\mathscr H,\mu)),\quad \pi_B\colon B\to \mathcal L(L^2(G^{(0)},\mathscr H,\mu))\]
	satisfying $\pi(a\otimes b)=\pi_\mathscr A(a)\pi_B(b)$ for all $a\in C_0(\mathscr A),b\in B$ and such that $(\mu,\mathscr H,\pi_\mathscr A,U)$ is a covariant representation of $(G,\mathscr A,\alpha)$. Denote by 
		\[\iota\colon Z(\pi_\mathscr A(\mathscr A)'')\hookrightarrow Z(\pi(\mathscr A\otimes_{\max}B)'')\subseteq \End(\mathscr H)\]
	the canonical inclusion. Note that $\iota$ is unital and normal. One easily checks that if $(a_i)_{i\in \N}$ is a sequence of functions witnessing amenability of the $W^*$-dynamical system $(G,\pi_\mathscr A(\mathscr A)'',\alpha'',\mu)$, then $(\iota\circ a_i)_{i\in \N}$ is a sequence of functions witnessing amenability of the $W^*$-dynamical system $(G,\pi(\mathscr A\otimes_{\max}B)'',\alpha''\otimes \id,\mu)$. This shows that $(G,\mathscr A\otimes_{\max}B,\alpha)$ is measurewise amenable.
\end{proof}

\begin{proof}[Proof of Theorem \ref{thm:Nuclearity for groupoid crossed products}]
	Suppose first that $C_0(\mathscr A)\rtimes_r G$ is nuclear. Then $C_0(\mathscr A)$ is nuclear since there is a conditional expectation $E\colon C_0(\mathscr A)\rtimes_r G\to C_0(\mathscr A)$. Let $(\mu,\mathscr H,\pi,U)$ be a covariant representation of $(G,\mathscr A,\alpha)$. Using Lemma \ref{lem:williams}, it is easy to see that the von Neumann crossed product $L^\infty(\pi(\mathscr A)'')\overline{\rtimes}G$ is the weak closure of (possibly a quotient of) the reduced crossed product $C_0(\mathscr A)\rtimes_r G$. Since $C_0(\mathscr A)\rtimes_r G$ is nuclear, $L^\infty(\pi(\mathscr A)'')\overline \rtimes G$ is semi-discrete and therefore injective. It follows from Proposition \ref{prop:injectivity of crossed products} that $(G,\pi(\mathscr A)'',\alpha'',\mu)$ is amenable. Since $(\mu,\mathscr H,\pi,U)$ was arbitrary, we conclude that $(G,\mathscr A,\alpha)$ is measurewise amenable. 
	
	Suppose now that $C_0(\mathscr A)$ is nuclear and that $(G,\mathscr A,\alpha)$ is measurewise amenable. Then $C_0(\mathscr A)\rtimes_r G$ is nuclear by the same proof as in Theorem \ref{thm:APimpliesnuclearity} using Lemma \ref{lem: measurewise amenable and tensor products} instead of Lemma \ref{lem:APandtensorproducts} and Theorem \ref{thm:measurewise amenable implies WC} instead of Theorem \ref{thm:APimpliesWC}.
\end{proof}

\section{The non-\'etale case}

We briefly point out what parts of the paper carry over if we consider a second countable, locally compact, Hausdorff groupoid $G$ that is not necessarily \'etale. One modification that applies to all sections is that direct sums indexed over fibers $G^x$ of $G$ have to be replaced by integrals with respect to a Haar system. The following parts of each section can be adapted to the non-\'etale case:

\subsection*{Preliminaries}
The definitions of Fell bundles, $C^*$-dynamical systems and $W^*$-dynamical systems are identical. The definition of the associated $C^*$-algebras has to be modified to only take into account representations that are bounded in \emph{$\|\cdot \|_I$-norm} (see \cite{Muhlydisintegration,Renault1987Representations,OtyRamsay2006GvNA}). The disintegration theorems still hold. However, the canonical conditional expectations no longer exist. Although we expect Proposition \ref{prop:algebraic Fell bundles} to still hold, we are not able to prove it (the argument that we borrow from \cite{takeishi} uses that $G$ is \'etale).

\subsection*{An approximation property for Fell bundles}
Definition \ref{defn:AP} can still be stated and Theorem \ref{thm:APimpliesWC} still holds with the same proof. We expect Theorem \ref{thm:APimpliesnuclearity} to still hold, but we are currently not able to remove the \'etaleness assumption in the proofs of Proposition \ref{prop:algebraic Fell bundles} (needed for the construction of tensor products) and Lemma \ref{lem:maximalandmaximalproduct}. 

\subsection*{Amenability for groupoid actions}
Definitions \ref{defn:measurewise amenable} and \ref{defn: amenable G C^*} can still be stated and Theorems \ref{thm:AP implies measurewise amenable} and \ref{thm:measurewise amenable implies WC} hold with the same proofs. We expect Theorem \ref{thm:amenability for vN algebras} to still hold, but its proof relies on Lemma \ref{lem:commutative G von Neumann algebra}, which we can currently only prove for \'etale groupoids. Proposition \ref{prop:injectivity of crossed products} is false in general: The group von Neumann algebra of $\SL(2,\R)$ for example is injective while $\SL(2,\R)$ is not amenable. For the same reason the "only if" implication of Theorem \ref{thm:Nuclearity for groupoid crossed products} fails. The "if" implication of Theorem \ref{thm:Nuclearity for groupoid crossed products} still holds. The proof is identical to the proof given here except that one has to use \cite[Thm. 4.1]{LalondeNuclearity} for the construction of tensor product bundles and \cite[Thm. 4.2]{LalondeNuclearity} instead of Lemma \ref{lem:maximalandmaximalproduct}.

 \bibliography{references}{}
	\bibliographystyle{alpha}
\end{document}